\tikzset{join/.code=\tikzset{after node path={%
\ifx\tikzchainprevious\pgfutil@empty\else(\tikzchainprevious)%
edge[every join]#1(\tikzchaincurrent)\fi}}}
\tikzset{>=stealth',every on chain/.append style={join},
         every join/.style={->}}
\tikzstyle{labeled}=[execute at begin node=$\scriptstyle,
\begin{document}

\theoremstyle{definition}
\newtheorem{theorem}{Theorem}
\newtheorem{definition}[theorem]{Definition}
\newtheorem{problem}[theorem]{Problem}
\newtheorem{assumption}[theorem]{Assumption}
\newtheorem{corollary}[theorem]{Corollary}
\newtheorem{proposition}[theorem]{Proposition}
\newtheorem{example}[theorem]{Example}
\newtheorem{lemma}[theorem]{Lemma}
\newtheorem{observation}[theorem]{Observation}
\newtheorem{fact}[theorem]{Fact}
\newtheorem{question}[theorem]{Open Question}
\newtheorem{conjecture}[theorem]{Conjecture}
\newtheorem{addendum}[theorem]{Addendum}
\newtheorem*{claim}{Claim}
\newcommand{\uint}{{[0, 1]}}
\newcommand{\Cantor}{{\{0,1\}^\mathbb{N}}}
\newcommand{\name}[1]{\textsc{#1}}
\newcommand{\me}{\name{P.}}
\newcommand{\id}{\textrm{id}}
\newcommand{\dom}{\operatorname{dom}}
\newcommand{\Dom}{\operatorname{Dom}}
\newcommand{\codom}{\operatorname{CDom}}
\newcommand{\spec}{\operatorname{spec}}
\newcommand{\opti}{\operatorname{Opti}}
\newcommand{\optis}{\operatorname{Opti}_s}
\newcommand{\Baire}{\mathbb{N}^\mathbb{N}}
\newcommand{\hide}[1]{}
\newcommand{\mto}{\rightrightarrows}
\newcommand{\Sierp}{Sierpi\'nski }
\newcommand{\BC}{\mathcal{B}}
\def\C{\textrm{C}}
\newcommand{\XC}{\textrm{XC}}
\newcommand{\CC}{\textrm{CC}}
\newcommand{\UC}{\textrm{UC}}
\newcommand{\lpo}{\textrm{LPO}}
\newcommand{\aouc}{\textrm{AoUC}}
\newcommand{\ic}[1]{\textrm{C}_{\sharp #1}}
\newcommand{\llpo}{\textrm{LLPO}}
\newcommand{\leqW}{\leq_{\textrm{W}}}
\newcommand{\leW}{<_{\textrm{W}}}
\newcommand{\equivW}{\equiv_{\textrm{W}}}
\newcommand{\equivT}{\equiv_{\textrm{T}}}
\newcommand{\geqW}{\geq_{\textrm{W}}}
\newcommand{\pipeW}{|_{\textrm{W}}}
\newcommand{\nleqW}{\nleq_\textrm{W}}
\newcommand{\leqsW}{\leq_{\textrm{sW}}}
\newcommand{\equivsW}{\equiv_{\textrm{sW}}}
\newcommand{\Sort}{\operatorname{Sort}}
\newcommand{\Contr}{\mathrm{Contr}}
\newcommand{\sort}[2]{({#1})^{\rm sort}_{#2}}
\newcommand{\pitc}{\Pi^0_2\textrm{C}}
\newcommand{\upto}{\upharpoonright}

\newcommand{\fr}{\mbox{}^\smallfrown}
\newcommand{\N}{\mathbb{N}}
\newcommand{\om}{\omega}
\newcommand{\ep}{\varepsilon}
\newcommand{\A}{\mathcal{A}}
\newcommand{\B}{\mathcal{B}}
\newcommand{\F}{\mathcal{F}}
\newcommand{\Q}{\mathcal{Q}}
\newcommand{\si}{\sigma}
\newcommand{\Afr}{\mathfrak{A}}
\newcommand{\Bfr}{\mathfrak{B}}

\title{Convex choice, finite choice and sorting}

\author{
Takayuki Kihara
\institute{Department of Mathematical Informatics\\ Nagoya University, Nagoya, Japan}
\email{kihara@i.nagoya-u.ac.jp}
\and
Arno Pauly\footnotemark
\institute{Department of Computer Science\\Swansea University, Swansea, UK\\ \& \\ Department of Computer Science\\University of Birmingham, Birmingham, UK}
\email{Arno.M.Pauly@gmail.com}
}

\def\titlerunning{Convex choice, finite choice and sorting}
\def\authorrunning{T. Kihara \& A. Pauly}
\maketitle

\begin{abstract}
We study the Weihrauch degrees of closed choice for finite sets, closed choice for convex sets and sorting infinite sequences over finite alphabets. Our main results are: One, that choice for finite sets of cardinality $i + 1$ is reducible to choice for convex sets in dimension $j$, which in turn is reducible to sorting infinite sequences over an alphabet of size $k + 1$, iff $i \leq j \leq k$. Two, that convex choice in dimension two is not reducible to the product of convex choice in dimension one with itself. Three, that sequential composition of one-dimensional convex choice is not reducible to convex choice in any dimension. The latter solves an open question raised at a Dagstuhl seminar on Weihrauch reducibility in 2015. Our proofs invoke Kleene's recursion theorem, and we describe in some detail how Kleene's recursion theorem gives rise to a technique for proving separations of Weihrauch degrees.
\end{abstract}

{\renewcommand*{\thefootnote}{*} \footnotetext{\noindent\begin{minipage}{0.1\textwidth}\includegraphics[width=\textwidth]{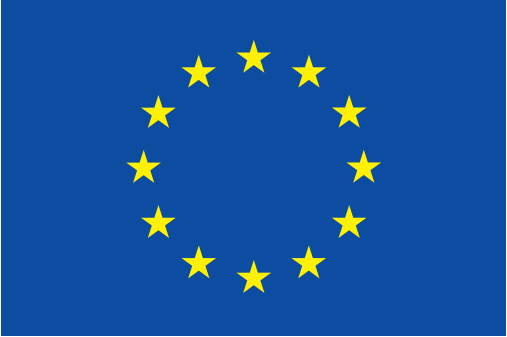}\end{minipage} \begin{minipage}{0.8\textwidth} This project has received funding from the European Unions Horizon 2020 research and innovation programme under the Marie Sklodowska-Curie grant agreement No 731143, \emph{Computing with Infinite Data}.\end{minipage}}}

\section{Introduction}
The Weihrauch degrees are the degrees of non-computability for problems in computable analysis. In the wake of work by Brattka, Gherardi, Marcone and P.~\cite{gherardi,brattka2,brattka3,paulyreducibilitylattice} they have become a very active research area in the past decade. A recent survey is found as \cite{pauly-handbook}.

We study the Weihrauch degrees of closed choice for finite sets, closed choice for convex sets and sorting infinite sequences over finite alphabets. The closed choice operators have turned out to be a useful scaffolding in that structure: We often classify interesting operations (for example linked to existence theorems) as being equivalent to a choice operator, and then prove separations for the choice operators, as they are particularly amenable for many proof techniques. Examples of this are found in \cite{brattka3,paulybrattka,paulybrattka2,hoelzl,hoelzl2,pauly-vitali,paulybrattka3,eike-neumann,pauly-marcone}. Convex choice in particular captures the degree of non-computability of finding fixed points of non-expansive mappings via the Goehde-Browder-Kirk fixed point theorem \cite{eike-neumann}.

The present article is a continuation of \cite{paulyleroux} by Le Roux and P., which already obtained some results on the connections between closed choice for convex sets and closed choice for finite sets. We introduce new proof techniques and explore the connection to the degree of sorting infinite sequences. Besides laying the foundations for future investigations of specific theorems, we are also addressing a question on the complexity caused by dimension: Researchers have often wondered whether there is a connection between the dimension of the ambient space and the complexity of certain choice principles. An initial candidate was to explore closed choice for connected subsets, but it turned out that the degree is independent of the dimension, provided this is at least 2 \cite{paulybrattka3}. As already shown in \cite{paulyleroux}, this works for convex choice. One reason for this was already revealed in \cite{paulyleroux}: We need $n$ dimensions in order to encode a set of cardinality $n + 1$. We add another reason here: Each dimension requires a separate instance of sorting an infinite binary sequence in order to find a point in a convex set.

Our Theorem \ref{maintheorem2} gives a negative answer to \cite[Question 3.14]{paulyleroux}, which was again raised as an open problem at a Dagstuhl seminar on Weihrauch reducibility, c.f.~\cite{pauly-dagstuhl}.

An extended abstract lacking Sections \ref{sec:triangles} and \ref{sec:convexcomposition} appeared as \cite{pauly-kihara5-tamc}.

\paragraph*{Structure of the paper} Most of our results are summarized in Figure \ref{figure:reducibilities} on Page \pageref{figure:reducibilities}. Section \ref{sec:background} provides a brief introduction to Weihrauch reducibility. In Section \ref{sec:definitions} we provide formal definitions of the principles under investigation, and give a bit more context. We proceed to introduce our new technique to prove separations between Weihrauch degrees in Section \ref{sec:recursiontheorem}; it is based on Kleene's recursion theorem. The degree of sorting an infinite binary sequence is studied in Section \ref{sec:sort}, including a separation technique adapted specifically for this in Subsection \ref{subsec:displacement}, its connection to convex choice in Subsection \ref{subsec:sortconvex} and a digression on the task of finding connected components of countable graphs in Subsection \ref{subsec:sortdigression}. Section \ref{sec:finitesort} is constituted by Theorem \ref{theo:finitebelowsort} and its proof, establishing the precise relationship between finite choice and sorting. In Section \ref{sec:game} we introduce a game characterizing reducibility between finite choice for varying cardinalities. Section \ref{sec:triangles} develops a technique to prove that $\XC_2 \nleqW \XC_1 \times \XC_1$, and in Section \ref{sec:convexcomposition} we show ${\sf XC}_1\star{\sf AoUC}\not\leq_W{\sf XC}_k$ for all $k\in\mathbb{N}$.

\section{Background on Weihrauch reducibility}
\label{sec:background}

Weihrauch reducibility is a quasiorder defined on multi-valued functions between represented spaces. We only give the core definitions here, and refer to \cite{pauly-synthetic} for a more in-depth treatment. Other sources for computable analysis are \cite{weihrauchd,brattkaintro}.

\begin{definition}
A represented space $\mathbf{X}$ is a set $X$ together with a partial
surjection $\delta_{\mathbf{X}} : \subseteq \Baire \to X$.
\end{definition}

A partial function $F : \subseteq \Baire \to \Baire$ is called a \emph{realizer}
of a function $f : \subseteq \mathbf{X} \to \mathbf{Y}$ between represented
spaces, if $f(\delta_\mathbf{X}(p)) = \delta_\mathbf{Y}(F(p))$ holds for all
$p \in \dom(f \circ \delta_\mathbf{X})$. We denote $F$ being an realizer of
$f$ by $F \vdash f$. We then call $f : \subseteq \mathbf{X} \to \mathbf{Y}$
\emph{computable} (respectively \emph{continuous}), iff it has a computable
(respectively continuous) realizer.

Represented spaces can adequately model most spaces of interest in \emph{everyday mathematics}. For our purposes, we are primarily interested in the construction of the hyperspace of closed subsets of a given space.

The category of represented spaces and continuous functions is
cartesian-closed, by virtue of the UTM-theorem. Thus, for any two represented
spaces $\mathbf{X}$, $\mathbf{Y}$ we have a represented spaces
$\mathcal{C}(\mathbf{X},\mathbf{Y})$ of continuous functions from
$\mathbf{X}$ to $\mathbf{Y}$. The expected operations involving
$\mathcal{C}(\mathbf{X},\mathbf{Y})$ (evaluation, composition, (un)currying)
are all computable.

Using the Sierpi\'nski space $\mathbb{S}$ with underlying
set $\{\top,\bot\}$ and representation $\delta_{\mathbf{S}} : \Baire \to
\{\top,\bot\}$ defined via $\delta_{\mathbf{S}}(\bot)^{-1} = \{0^\omega\}$,
we can then define the represented space $\mathcal{O}(\mathbf{X})$ of
\emph{open} subsets of $\mathbf{X}$ by identifying a subset of $\mathbf{X}$
with its (continuous) characteristic function into $\mathbb{S}$. Since
countable \emph{or} and binary \emph{and} on $\mathbb{S}$ are computable, so
are countable union and binary intersection of open sets.

The space
$\mathcal{A}(\mathbf{X})$ of closed subsets is obtained by taking formal
complements, i.e.~the names for $A \in \mathcal{A}(\mathbf{X})$ are the same
as the names of $X \setminus A \in \mathcal{O}(\mathbf{X})$ (i.e.~we are
using the negative information representation). Intuitively, this means that when reading a name for a closed set, this can always shrink later on, but never grow. It is often very convenient that we can alternatively view $A \in \mathcal{A}(\Cantor)$ as being represented by some tree $T$ via $[T] = A$ (here $[T]$ denotes the set of infinite paths through $T$).

We can now define Weihrauch reducibility. Again, we give a very brief treatment here, and refer to \cite{pauly-handbook} for more details and references.

\begin{definition}[Weihrauch reducibility]
\label{def:weihrauch} Let $f,g$ be multivalued functions on represented
spaces. Then $f$ is said to be {\em Weihrauch reducible} to $g$, in symbols
$f\leqW g$, if there are computable functions $K,H:\subseteq\Baire\to\Baire$ such
that $\left(p \mapsto K\langle p, GH(p) \rangle \right )\vdash f$ for all $G
\vdash g$.
\end{definition}

The Weihrauch degrees (i.e.~equivalence classes of $\leqW$) form a distributive lattice, but we will not need the lattice operations in this paper. Instead, we use two kinds of products. The usual cartesian product induces an operation $\times$ on Weihrauch degrees. We write $f^k$ for the $k$-fold cartesian product with itself. The compositional product $f \star g$ satisfies that $$f \star g \equivW \max_{\leqW} \{f_1 \circ g_1 \mid f_1 \leqW f \wedge g_1 \leqW g\}$$
and thus is the hardest problem that can be realized using first $g$, then something computable, and finally $f$. The existence of the maximum is shown in \cite{paulybrattka4} via an explicit construction, which is relevant in some proofs. Both products as well as the lattice-join can be interpreted as logical \emph{and}, albeit with very different properties.

We'll briefly mention a further unary operation on Weihrauch degrees, the
finite parallelization $f^*$. This
has as input a finite tuple of instances to $f$ and needs to solve all of
them.

As mentioned in the introduction, the closed choice principles are valuable benchmark degrees in the Weihrauch lattice:
\begin{definition}
For a represented space $\mathbf{X}$, the closed choice principle $\C_\mathbf{X} : \subseteq \mathcal{A}(\mathbf{X}) \mto \mathbf{X}$ takes as input a non-empty closed subset $A$ of $\mathbf{X}$ and outputs some point $x \in A$.
\end{definition}

\section{The principles under investigation}
\label{sec:definitions}
We proceed to give formal definitions of the three problems our investigation is focused on. These are \emph{finite choice}, the task of selecting a point from a closed subset (of $\Cantor$ or $\uint^n$) which is guaranteed to have either exactly or no more than $k$ elements; \emph{convex choice}, the task of selecting a point from a convex closed subset of $\uint^k$; and \emph{sorting} an infinite sequence over the alphabet $\{0,1,\ldots,k\}$ in increasing order. Our main result is that each task forms a strictly increasing chain in the parameter $k$, and these chains are perfectly aligned as depicted in Figure \ref{figure:reducibilities}. For finite choice and convex choice, this was already established in \cite{paulyleroux}. Our Theorem \ref{theo:finitebelowsort} implies the main theorem from \cite{paulyleroux} with a very different proof technique.

\begin{definition}[{\cite[Definition 7]{paulyleroux}}]
For a represented space $\mathbf{X}$ and $1 \leq n \in \mathbb{N}$, let $\C_{\mathbf{X},\sharp=n} := \C_{\mathbf{X}}|_{\{A \in \mathcal{A}(\mathbf{X}) \mid |A| = n\}}$ and $\C_{\mathbf{X},\sharp\leq n} := \C_{\mathbf{X}}|_{\{A \in \mathcal{A}(\mathbf{X}) \mid 1 \leq |A| \leq n\}}$.
\end{definition}

It was shown as \cite[Corollary 10]{paulyleroux} that for every computably compact computably rich computable metric space $\mathbf{X}$ we find $\C_{\mathbf{X},\sharp=n} \equivW \C_{\Cantor,\sharp=n}$ and $\C_{\mathbf{X},\sharp\leq n} \equivW \C_{\Cantor,\sharp \leq n}$. This in particular applies to $\mathbf{X} = \uint^d$. We denote this Weihrauch degree by $\ic{= n}$ respectively $\ic{\leq n}$.

\begin{definition}[{\cite[Definition 8]{paulyleroux}}]
By $\XC_n$ we denote the restriction of $\C_{\uint^n}$ to convex sets.
\end{definition}

Since for subsets of $\uint$ being an interval, being convex and being connected all coincide, we find that $\XC_1$ is the same thing as one-dimensional connected choice $\CC_1$ as studied in \cite{paulybrattka3} and as interval choice $\C_{\mathrm{I}}$ as studied in \cite{brattka3}.

\begin{definition}
Let $\mathrm{Sort}_d : d^\omega \to d^\omega$ be defined by $\mathrm{Sort}_d(p) = 0^{c_0}1^{c_1}\ldots k^\infty$, where $|\{n \mid p(n) = 0\}| = c_0$, $|\{n \mid p(n) = 1\}| = c_1$, etc, and $k$ is the least such that $|\{n \mid p(n) = k\}| = \infty$. We write just $\Sort$ for $\Sort_2$.
\end{definition}

$\mathrm{Sort}$ was introduced and studied in \cite{paulyneumann}, and then generalized to $\mathrm{Sort}_k$ in \cite{hoelzl2}. Note that the principle just is about sorting a sequence in order without removing duplicates. In \cite{paulytsuiki-arxiv} it is shown that $\mathrm{Sort}_{n+1} \equivW \mathrm{Sort}^n$; it follows that $\Sort^* \equivW \Sort_d^* \equivW \coprod_{d \in \mathbb{N}} \Sort_d$. The degree $\Sort^*$ was shown in \cite{paulyneumann} to capture the strength of the strongly analytic machines \cite{hotz2,gaertnerhotz}, which in turn are an extension of the BSS-machines \cite{blum2}. $\Sort$ is equivalent to Thomae's function; and to the translation of the standard representation of the reals into the continued fraction representation \cite{weihrauchb}. In \cite{pauly-marcone}, $\Sort$ is shown to be equivalent to certain projection operators.

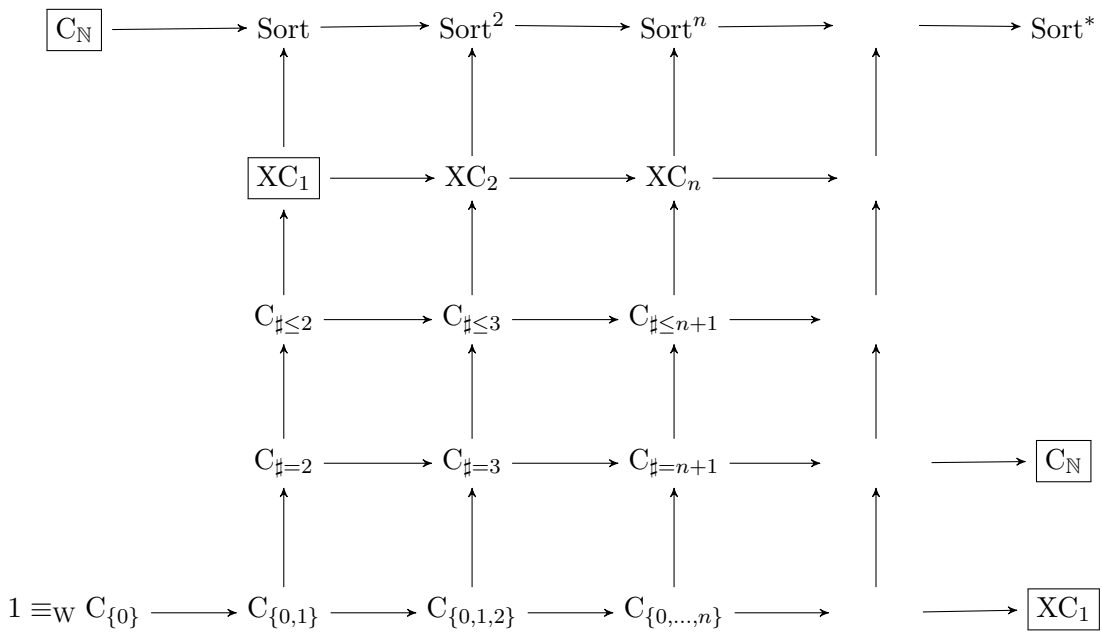
\begin{figure}[htbp]
\begin{tikzpicture}
  \matrix (m) [matrix of math nodes, row sep=3em, column sep=3em]
    { \boxed{\C_\mathbb{N}} & \Sort  & \Sort^2  & \Sort^n  & \phantom{\Sort^4} & \Sort^* \\
    \phantom{\XC_n} & \boxed{\XC_1}  & \XC_2  & \XC_n  & \phantom{\XC_n} &   \\
     & \ic{\leq2}  & \ic{\leq3}  & \ic{\leq n+1}  &  \phantom{\ic{\leq n+1}} &  \\
       & \ic{=2}  & \ic{=3}  & \ic{=n+1}  & \phantom{\ic{=n+1}} & \boxed{\C_\mathbb{N}}\\
      1 \equivW \C_{\{0\}} & \C_{\{0, 1\}} & \C_{\{0, 1, 2\}} & \C_{\{0, \ldots, n\}} & \phantom{\C_{\{0, 1\}}} &  \boxed{\XC_1}\\ };
  { [start chain] \chainin (m-1-1);
      \chainin (m-1-2);
    \chainin (m-1-3);
    \chainin (m-1-4);
    \chainin (m-1-5);
        \chainin (m-1-6);}
    { [start chain] \chainin (m-2-2);
    \chainin (m-2-3);
    \chainin (m-2-4);
    \chainin (m-2-5); }
    { [start chain] \chainin (m-3-2);
    \chainin (m-3-3);
    \chainin (m-3-4);
    \chainin (m-3-5); }
 { [start chain] \chainin (m-4-2);
    \chainin (m-4-3);
    \chainin (m-4-4);
    \chainin (m-4-5); \chainin (m-4-6);}
      { [start chain]
     \chainin (m-5-1);
    \chainin (m-5-2);
      { [start branch=A] \chainin (m-4-2); \chainin (m-3-2); \chainin (m-2-2); \chainin (m-1-2);}
    \chainin (m-5-3);
      { [start branch=B] \chainin (m-4-3); \chainin (m-3-3); \chainin (m-2-3); \chainin (m-1-3);}
    \chainin (m-5-4);
           { [start branch=C] \chainin (m-4-4); \chainin (m-3-4); \chainin (m-2-4); \chainin (m-1-4);}
    \chainin (m-5-5);
          { [start branch=D] \chainin (m-4-5); \chainin (m-3-5); \chainin (m-2-5); \chainin (m-1-5);}
    \chainin (m-5-6); }

\end{tikzpicture}
\caption{Overview of our results; extending \cite[Figure 1]{paulyleroux} by the top row.
The diagram depicts all Weihrauch reductions between the stated principles up to transitivity. Boxes mark degrees appearing in two places in the diagram. Our additional results are provided as Theorems \ref{theo:convexsort} and \ref{theo:finitebelowsort}.}
\label{figure:reducibilities}
\end{figure}

There are some additional Weihrauch problems we make passing reference to. \emph{All-or-unique choice} captures the idea of a problem either having a unique solution, or being completely undetermined:

\begin{definition}
Let $\aouc_\mathbf{X}$ be the restriction of $\C_\mathbf{X}$ to $\{\{x\} \mid x \in \mathbf{X}\} \cup \{X\}$.
\end{definition}

 A prototypical example (which is equivalent to the full problem) is solving $ax = b$ over $\uint$ with $0 \leq b \leq a$: Either there is the unique solution $\frac{b}{a}$, or $b = a = 0$, and any $x \in \uint$ will do. The degree of $\aouc_\mathbf{X}$ is the same for any computably compact computably rich computable metric space, in particular for $\mathbf{X} = \Cantor$ or $\mathbf{X} = \uint^d$. We just write $\aouc$ for that degree. This problem was studied in \cite{paulyincomputabilitynashequilibria,pauly-kihara2-mfcs} where it is shown that $\aouc^*$ is the degree of finding Nash equilibria in bimatrix games and of executing Gaussian elimination.

\section{Proving separations via the recursion theorem}
\label{sec:recursiontheorem}
A core technique we use to prove our separation results invokes Kleene's recursion theorem in order to let us prove a separation result by proving computability of a certain map (rather than having to show that no computable maps can witness a reduction). We had already used this technique in \cite{pauly-kihara2-mfcs}, but without describing it explicitly. Since the technique has proven very useful, we formally state the argument here as Theorem \ref{theo:recursionweihrauch} after introducing the necessary concepts to formulate it.

\begin{definition}
A representation $\delta$ of $\mathbf{X}$ is \emph{precomplete}, if every computable partial $f :\subseteq 2^\omega \to \mathbf{X}$ extends to a computable total $F : 2^\omega \to \mathbf{X}$.
\end{definition}

\begin{proposition}
For effectively countably-based $\mathbf{X}$, the space $\mathcal{O}(\mathbf{X})$ (and hence $\mathcal{A}(\mathbf{X})$) is precomplete.
\begin{proof}
It suffices to show this for $\mathcal{O}(\mathbb{N})$, where it just follows from the fact that we can delay providing additional information about a set as long as we want; and will obtain a valid name even if no additional information is forthcoming.
\end{proof}
\end{proposition}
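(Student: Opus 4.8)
The plan is to reduce the whole claim to the single case $\mathcal{O}(\mathbb{N})$, exactly as the statement suggests, and then to push precompleteness forward along the canonical computable surjections $\mathcal{O}(\mathbb{N}) \to \mathcal{O}(\mathbf{X}) \to \mathcal{A}(\mathbf{X})$. For $\mathcal{O}(\mathbb{N})$ itself I would fix the standard representation in which a name of $U \subseteq \mathbb{N}$ is any $p$ with $\{n : \exists i \ p(i) = n+1\} = U$; the role of the extra symbol $0$ is precisely that every finite string extends, by appending $0^\omega$, to a legal name (of a finite set). Given a computable partial $f :\subseteq 2^\omega \to \mathcal{O}(\mathbb{N})$, choose a computable realizer $G$ and define $F$ on input $p$ by simulating $G(p)$: whenever $G$ commits a further output symbol, $F$ forwards it, and at every other stage $F$ outputs $0$. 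Then $F$ is total and computable, and for $p \in \dom(f)$ the simulation outputs exactly the sequence $G(p)$, so $F(p) = f(p)$; thus $F$ is the desired total extension and $\mathcal{O}(\mathbb{N})$ is precomplete.

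For the transfer to $\mathcal{O}(\mathbf{X})$ I would use that effective countable basedness makes the standard representation of $\mathcal{O}(\mathbf{X})$ factor as $\delta_{\mathcal{O}(\mathbf{X})} = \iota \circ \delta_{\mathcal{O}(\mathbb{N})}$, where, for a fixed effective basis $(B_n)_n$, the union map $\iota : \mathcal{O}(\mathbb{N}) \to \mathcal{O}(\mathbf{X})$, $W \mapsto \bigcup_{n \in W} B_n$, is computable and surjective. Then any realizer $G$ of a computable partial $f :\subseteq 2^\omega \to \mathcal{O}(\mathbf{X})$ is simultaneously a realizer of the computable partial map $\tilde f :\subseteq 2^\omega \to \mathcal{O}(\mathbb{N})$, $\tilde f(p) = \delta_{\mathcal{O}(\mathbb{N})}(G(p))$, which satisfies $\iota \circ \tilde f = f$; extending $\tilde f$ to a total computable $\tilde F$ by the previous paragraph and putting $F := \iota \circ \tilde F$ gives a total computable extension of $f$. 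For $\mathcal{A}(\mathbf{X})$ the representation is by definition obtained from that of $\mathcal{O}(\mathbf{X})$ by reading a name for $A$ as a name for $X \setminus A$, so complementation acts as the identity on names; hence a realizer of a computable partial $f :\subseteq 2^\omega \to \mathcal{A}(\mathbf{X})$ is also a realizer of $p \mapsto X \setminus f(p)$ into $\mathcal{O}(\mathbf{X})$, and totally extending the latter yields, via the same realizer, a total extension of $f$.

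The actual mathematical content is confined to the first paragraph, and the only thing one must be careful about is the choice of representations: one has to work with a presentation of $\mathcal{O}(\mathbb{N})$, and hence of $\mathcal{O}(\mathbf{X})$, in which ``stalling forever'' still produces a legal name, and one has to check that $\iota$ is computable and that $\delta_{\mathcal{O}(\mathbf{X})}$ genuinely factors through it up to computable translation. This factorization is the only step that uses anything about $\mathbf{X}$ and is therefore the main obstacle in that sense, but it is nothing more than the construction of $\mathcal{O}(\mathbf{X})$ for effectively based spaces, so no genuinely new argument is required.
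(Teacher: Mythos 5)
Your proposal is correct and follows essentially the same route as the paper: reduce to $\mathcal{O}(\mathbb{N})$ via the factorization of $\delta_{\mathcal{O}(\mathbf{X})}$ through the effective basis (and the name-identity for $\mathcal{A}(\mathbf{X})$), then use the ``stall by outputting the padding symbol'' argument to totalize any realizer. You have merely spelled out the details that the paper leaves implicit.
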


The preceding proposition is a special case of \cite[Theorem 6.5]{selivanov5}, which shows that many pointclasses have precomplete representations.

\begin{proposition}
The subspaces of $\mathcal{A}(\uint^n)$ consisting of the connected respectively the convex subsets are computable multi-valued retracts, and hence precomplete.
\begin{proof}
For the connected sets, this follows from \cite[Proposition 3.4]{paulybrattka3}; for convex subsets this follows from computability of the convex hull operation on $\uint^n$, see e.g.~\cite[Proposition 1.5]{paulyleroux} or \cite{ziegler8}.
\end{proof}
\end{proposition}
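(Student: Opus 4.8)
The plan is to separate the statement into the general implication it abbreviates --- ``computable multi-valued retract of a precomplete space $\Rightarrow$ precomplete'' --- and the construction of the two retractions. \emph{Step 1 (the ``hence'').} I would first record the closure lemma: if $\mathbf{X}$ carries a precomplete representation and $\mathbf{Y}$ is a represented space with the subspace representation inherited from $\mathbf{X}$ together with a computable multi-valued retraction $r : \mathbf{X} \mto \mathbf{Y}$ --- that is, $r$ is computable and $r(y) = \{y\}$ for every $y \in \mathbf{Y}$ --- then $\mathbf{Y}$ is precomplete. Given a computable partial $f : \subseteq 2^\omega \to \mathbf{Y}$, composing with the inclusion $\mathbf{Y} \hookrightarrow \mathbf{X}$ (realized by the identity on names) yields a computable partial map into $\mathbf{X}$, which by precompleteness of $\mathbf{X}$ extends to a computable total $G : 2^\omega \to \mathbf{X}$; then a computable total choice $F$ from $r \circ G : 2^\omega \mto \mathbf{Y}$ is the desired extension, since for $p \in \dom(f)$ we have $G(p) = f(p) \in \mathbf{Y}$, hence $(r \circ G)(p) = r(f(p)) = \{f(p)\}$, forcing $F(p) = f(p)$. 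The point that needs care is exactly that the retraction must be the \emph{identity} on $\mathbf{Y}$, not merely contain it: if $r(y)$ were only required to contain $y$, the choice step could return a different element of $r(f(p))$ and $F$ would fail to agree with $f$ on its domain.

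\emph{Step 2 (convex sets).} For the convex subspace I would take $r$ to be the closed convex hull operation $A \mapsto \overline{\mathrm{conv}}(A)$ on $\mathcal{A}(\uint^n)$. Since $\uint^n$ is compact, the convex hull of a compact subset is already compact, so this is a genuine single-valued map $\mathcal{A}(\uint^n) \to \mathcal{A}(\uint^n)$ whose range is exactly the convex closed sets and which fixes each of them (and $\overline{\mathrm{conv}}(\emptyset) = \emptyset$, so no special treatment of the empty set is needed). The only nontrivial ingredient is the computability of this operation, which is available off the shelf (e.g.\ \cite[Proposition 1.5]{paulyleroux} or \cite{ziegler8}). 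Combined with Step 1 and the preceding proposition --- which gives precompleteness of $\mathcal{A}(\uint^n)$ --- this settles the convex case.

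\emph{Step 3 (connected sets), and the main obstacle.} For connected sets there is no canonical ``connected hull'', and the retraction is genuinely multi-valued on the disconnected sets; producing it is the substantive part. I would invoke \cite[Proposition 3.4]{paulybrattka3}, which supplies a computable multi-valued $r : \mathcal{A}(\uint^n) \mto \mathcal{A}(\uint^n)$ all of whose values are connected closed sets and which satisfies $r(B) = \{B\}$ for every connected closed $B$; feeding this into Step 1 finishes the proof. The crux --- the step I expect to be hardest were one to insist on a self-contained argument --- is precisely this construction: connectedness of a closed subset of $\uint^n$ is not decidable (indeed not semidecidable) from a name, so $r$ cannot recognize a connected input and pass it through verbatim; it must be engineered so that its output converges on the nose to the input whenever the input happens to be connected, while still delivering \emph{some} connected closed set in all other cases.
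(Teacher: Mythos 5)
Your proposal is correct and takes essentially the same route as the paper, which likewise derives the convex case from computability of the convex hull and the connected case from the cited construction in \cite{paulybrattka3}, leaving the general ``computable multi-valued retract of a precomplete space is precomplete'' step (your Step 1) implicit. Your explicit verification of that step, including the observation that the retraction must be the identity on the subspace rather than merely containing the input, is accurate.
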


By $\mathcal{M}(\mathbf{X},\mathbf{Y})$ we denote the represented space of strongly continuous multivalued functions from $\mathbf{X}$ to $\mathbf{Y}$ studied in \cite{paulybrattka4}. The precise definition of strong continuity is irrelevant for us, we only need every partial continuous function on $\Cantor$ induces a minimal strongly continuous multivalued function that it is a realizer of; and conversely, every strongly continuous multivalued function is given by a continuous partial realizer.

\begin{theorem}
\label{theo:recursionweihrauch}
Let $\mathbf{X}$ have a total precomplete representation. Let $f : \mathbf{X} \mto \mathbf{Y}$ and $g : \mathbf{U} \mto \mathbf{V}$ be such that there exists a computable $e : \mathbf{U} \times \mathcal{M}(\mathbf{V},\mathbf{Y}) \mto \mathbf{X}$ such that if $x \in e(u,k)$ and $v \in g(u)$, then $k(v) \nsubseteq f(x)$. Then $f \nleqW g$.
\end{theorem}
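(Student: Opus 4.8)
The plan is to argue by contradiction: assume $f \leqW g$, witnessed by computable functions $H$ and $K$ in the sense of Definition~\ref{def:weihrauch}, and then use Kleene's recursion theorem together with the computable map $e$ to build an input $x$ to $f$ on which the putative reduction fails. The intuition is that $H$ turns a name of $x$ into a name of an instance $u$ of $g$, and $K$ (together with the ``something computable'' of the reduction) turns any $g$-answer $v$ into a purported $f$-answer; we want to pick $x$ so that $x \in e(u, k)$ where $k$ is the strongly continuous multivalued function induced by $v \mapsto K\langle p, v\rangle$, so that the hypothesis on $e$ yields $k(v) \nsubseteq f(x)$, contradicting that the reduction works.

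The key steps, in order, are as follows. First, fix a name-level picture: since $\mathbf{X}$ has a total precomplete representation $\delta_\mathbf{X} : \Cantor \to \mathbf{X}$ (WLOG over Cantor space, by the usual encoding), a partial computable map into $\mathbf{X}$ extends to a total one, and Kleene's recursion theorem will let us construct a name $p \in \Cantor$ that ``knows its own index.'' Second, from the reduction witnesses $H, K$: given a name $p$ for some $x \in \mathbf{X}$, the map $q \mapsto K\langle p, q\rangle$ is a computable partial function on Baire space, uniformly in $p$; it therefore induces (uniformly in $p$) a name for a point $k_p \in \mathcal{M}(\mathbf{V}, \mathbf{Y})$ — this is exactly the ``minimal strongly continuous multivalued function realized by a continuous partial realizer'' mentioned in the excerpt. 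Third, $H(p)$ is a name for some $u \in \mathbf{U}$ (an instance of $g$), again uniformly in $p$. Fourth, feed $(u, k_p)$ into the computable map $e : \mathbf{U} \times \mathcal{M}(\mathbf{V},\mathbf{Y}) \mto \mathbf{X}$; composing all of this, we get a computable partial multivalued map $\Phi$ sending $p$ to (a name for) some point of $e(u, k_p) \subseteq \mathbf{X}$. Fifth — the crux — we want a single name $p^*$ that is simultaneously a $\delta_\mathbf{X}$-name of a point $x^*$ and such that $x^* \in e(u^*, k_{p^*})$ where $u^*, k_{p^*}$ are computed from $p^*$ as above; precompleteness lets us assume $\Phi$ is total, and then the recursion theorem supplies $p^*$ with $\delta_\mathbf{X}(p^*) \in e(u^*, k_{p^*})$, i.e.\ $\Phi$ has a ``fixed point'' at the level of the represented space.

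Once $p^*$ is in hand, the contradiction is immediate: let $x^* = \delta_\mathbf{X}(p^*)$, let $u^*$ be the instance of $g$ named by $H(p^*)$, pick any $v^* \in g(u^*)$ with $G$-name $q^*$ (valid since $g$ is defined on $u^*$), and observe $x^* \in e(u^*, k_{p^*})$. The hypothesis on $e$ then gives $k_{p^*}(v^*) \nsubseteq f(x^*)$. But by the definition of Weihrauch reduction, $p \mapsto K\langle p, GH(p)\rangle$ realizes $f$, so $K\langle p^*, q^*\rangle$ is a name for some element of $f(x^*)$; and that element lies in $k_{p^*}(v^*)$ by construction of $k_{p^*}$ as the strongly continuous multifunction induced by $q \mapsto K\langle p^*, q\rangle$. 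Hence $k_{p^*}(v^*) \cap f(x^*) \neq \emptyset$ — wait, we need the stronger $k_{p^*}(v^*) \subseteq f(x^*)$ to be contradicted, which is exactly what ``$p \mapsto K\langle p, GH(p)\rangle \vdash f$ for \emph{all} $G \vdash g$'' delivers once one checks that ranging over all realizers $G$ of $g$ and all $G$-names $q^*$ of $v^*$ forces every value of $k_{p^*}$ at $v^*$ into $f(x^*)$. I expect the main obstacle to be precisely this last bookkeeping: correctly setting up $k_p$ as a \emph{strongly} continuous multivalued function (so that its values are governed by all realizers, matching the universal quantifier over $G$ in the definition of $\leqW$) and verifying that the recursion-theoretic fixed point can be taken \emph{inside} the multivalued map $e$ rather than only at the level of names — this is where precompleteness of $\mathbf{X}$'s representation and the characterization of $\mathcal{M}(\mathbf{V},\mathbf{Y})$ are doing the real work, and it is worth stating carefully.
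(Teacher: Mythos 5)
Your proposal is correct and follows essentially the same route as the paper's (much terser) proof: assume the reduction, compose $H$, $K$ and a realizer of $e$ into a self-referential map on names, use precompleteness to totalize it and Kleene's recursion theorem to obtain the fixed-point input, then derive the contradiction. Your closing observation — that the universal quantifier over realizers $G$ of $g$ is what turns $k_{p^*}(v^*)\nsubseteq f(x^*)$ into a violation of the reduction — is exactly the right resolution of the bookkeeping worry you raise.
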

\begin{proof}
Assume that $f \leqW g$ via computable $H$, $K$. Let computable $E$ be a realizer of $e$. Let $(\phi_ n: \subseteq \mathbb{N} \to \mathbb{N})_{n \in \mathbb{N}}$ be a standard enumeration of the partial computable functions. By assumption, we can consider each $\phi_n$ to denote some element in $\mathbf{X}$. Let $\lambda$ be a computable function such that $\phi_{\lambda(n)} = E(H(\phi_n), (v \mapsto K(\phi_n,v)))$. By Kleene's fixed point theorem, there is some $n_0$ with $\phi_{n_0} = \phi_{\lambda(n_0)}$. Inputting $\phi_{n_0}$ to $f$ fails the assumed reduction witnesses.
\end{proof}

Theorem \ref{theo:recursionweihrauch} says that, in order to show that $f \nleqW g$, it suffices to describe a computable strategy that, given a $g$-instance $u$ and an outer reduction $k$, produces an $f$-instance $x$ witnessing that $k$ fails to give a solution to $f(x)$ from a solution to $g(u)$.
%In \cite[Theorem 16]{pauly-kihara2-mfcs}, this fact is described as making a computable strategy of ${\sf Pro}$ which prevent Opp’s strategy.

As simple sample application for how to prove separations of Weihrauch degrees via the recursion theorem, we shall point out that $\XC_1$ already cannot solve some simple products. For contrast, however, note that $\C_2^* \leqW \XC_1$ was shown as \cite[Proposition 9.2]{paulybrattka3}.

\begin{theorem}
\label{theo:simpleproducts} $\C_2 \times \aouc \nleqW \XC_1$.
\end{theorem}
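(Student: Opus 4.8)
We apply Theorem~\ref{theo:recursionweihrauch}. Formally, take $\mathbf X = \mathcal A(\{0,1\})\times\mathcal A(\uint)$, which is precomplete as a product of precomplete spaces, and let $f$ be the restriction of $\C_{\{0,1\}}\times\C_\uint$ to those inputs whose first component is a nonempty closed subset of $\{0,1\}$ and whose second component is a singleton or all of $\uint$; then $f \equivW \C_2\times\aouc$. Taking $g=\XC_1$, it suffices by the theorem to exhibit a computable $e$ which, given a closed interval $u\subseteq\uint$ (the $\XC_1$‑instance) together with a strongly continuous $k:\uint\mto\{0,1\}\times\uint$ (the outer reduction), outputs a $\C_2$‑instance $A$ and an $\aouc$‑instance $B$ such that $k(v)\nsubseteq A\times B$ for every $v\in u$.

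The conceptual point is that one convex set $u$ cannot simultaneously encode the answer to $\C_2$ (needed once $A$ is revealed to be a singleton, possibly late) and the exact point answering $\aouc$ (the value of $B$). The primary plan for $e$ is: keep $A$ and $B$ undetermined while reading the shrinking approximations $C_0\supseteq C_1\supseteq\cdots$ to $u$ and simulating $k$; since for a valid reduction $K((A,B),\cdot)$ is single‑valued and continuous on the whole of $u=\XC_1(u)$, and $\pi_1\circ K$ takes values in the discrete space $\{0,1\}$ while $u$ is connected, the first coordinate of $k$ is forced to a single value $a$ on all of $u$; once $e$ has read off this value $a$ it commits $A:=\{1-a\}$ and $B:=\uint$, so that $k(v)\subseteq\{a\}\times\uint$ is disjoint from $A\times B$ for every $v\in u$ — contradicting validity, since $k$ has already committed its first output coordinate to $a$ before it could read the information ``$1\notin A$'' (resp.\ ``$0\notin A$''). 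If, instead, $k$ never reveals this value quickly enough relative to how fast the interval $u$ is being pinned down, $e$ switches to its secondary plan and uses the second coordinate: it commits $B$ to a singleton $\{y_0\}$ with $y_0$ chosen at a late effective stage, using continuity of $k$, to avoid all $b$‑values that $k$ still outputs on the not‑yet‑decided part of $u$; convexity of $u$ forces such a part to persist, so $k$ is caught outputting a second coordinate different from $y_0$ on an actual $\XC_1$‑answer.

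The technical heart — and the reason the recursion‑theorem packaging is genuinely needed — is the effectivity and timing of reading off the forced first‑coordinate value $a$: because $u$ is given only by negative information it need not contain a point with a computable name, so $e$ cannot simply evaluate $k$ at a point of $u$; one must run $k$ along the approximations $C_s$ (e.g.\ along the midpoints of their largest components) and argue, using continuity of $k$ together with the fact that the closed interval $u$ collapses from both endpoints, that $k$'s first‑coordinate commitment propagates down to $u$ — and this has to be interleaved with the second‑coordinate argument so that one branch or the other always produces the contradiction. The remaining bookkeeping (the branch where $H$ keeps $u=\uint$, the branch where $u$ collapses to a single, possibly non‑computable, point, and the boundary cases of the encoding) is routine but fiddly, and is exactly the sort of ``prove a map is computable'' task that Theorem~\ref{theo:recursionweihrauch} reduces the separation to. This should be contrasted with $\C_2^*\leqW\XC_1$: many independent copies of $\C_2$ fit into a single interval precisely because no continuously varying real value must be recovered alongside them.
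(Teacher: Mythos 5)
The set-up via Theorem~\ref{theo:recursionweihrauch} is right, but the core of your procedure $e$ rests on a false claim. You assert that, since $u$ is connected and $\{0,1\}$ is discrete, ``the first coordinate of $k$ is forced to a single value $a$ on all of $u$''. The outer reduction $k$ is only a \emph{strongly continuous multivalued} function, i.e.\ it is induced by a continuous map on \emph{names}; such a map can output first coordinate $0$ on the left part of the interval and $1$ on the right part, with boundary points receiving both values from different names. Connectedness of $u$ therefore forces nothing. (Non-constancy would actually be good for the diagonalization -- either singleton $A$ then works -- but then there is no value $a$ to ``read off'', and your fallback of running $k$ along midpoints of the components of the approximations $C_s$ cannot decide which case holds, nor identify $a$ when the first coordinate is constant, since the behaviour of $k$ off $u$ is unconstrained.)

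The missing idea is that the $\aouc$ coordinate must be used \emph{first}, and via a finite counting argument rather than by ``avoiding all $b$-values on the not-yet-decided part of $u$'' (those values may exhaust the whole codomain, and avoiding all of them is not what is needed anyway -- one only needs a single $v\in u$ whose image misses the chosen singleton, and producing such a $v$ computably is exactly the hard step). In the paper's proof one first waits for a length $s$ at which the first output coordinate is determined on every length-$s$ cylinder meeting the instance, and then observes that at most $2^s-1$ pairwise disjoint subintervals can each straddle two length-$s$ cylinders; hence some output cylinder $\tau$ for the second coordinate has the property that its preimage inside $[T]$ contains no straddling interval. Committing $B$ to the corresponding singleton then forces a dichotomy: either the reduction is already broken on the second coordinate, or the instance is confined to a single length-$s$ cylinder, on which the first coordinate genuinely is a determined constant $j$, and one finishes by setting $A=\{1-j\}$. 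Your proposal has the two coordinates in the wrong logical order and omits this combinatorial step, so as written it does not go through.
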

\begin{proof}
Given a convex tree $T \subseteq 2^{<\om}$ and a partial continuous function
$\phi : \subseteq \Cantor \to 2 \times \Cantor$, we compute set $S \in \mathcal{A}(\{0,1\})$ and $V \in \mathcal{A}(\Cantor)$ such that $S \neq \emptyset$, and $V = \Cantor$ or $V = \{p\}$ for some $p \in \Cantor$. Our construction ensures that $\exists p \in [T] \ \phi(p) \notin S \times V$.

Initially, $S = \{0,1\}$ and $V = \Cantor$.

We first search for $s$ such that for any $\sigma \in T$ of length $s$, the first value of $\phi(\sigma)$ is determined. If we never find one, then $S = \{0,1\}$ and $V = \Cantor$ work as desired.

Next, we search for some $\tau \in \{0,1\}^s$ such that $P_\tau := [T] \cap \bigcup_{j < 2} \phi^{-1}(j,[\tau])$ is such that any interval contained in $P_\tau$ is contained in some $[\sigma]$ for $\sigma \in \{0,1\}^s$.  Note that if $(J_i)_{i \in I}$ is a collection of pairwise disjoint intervals in $\Cantor$ such that
every $J_i$ intersects with at least two cylinders $[\sigma]$ and $[\sigma']$ for some strings $\sigma \neq \sigma'$ of
length $s$, then the size of $I$ is at most $2^s - 1$. Hence, if $\phi$ is defined on $[T]$, such a $\tau$ has to exist. Once we have found it, we set $V = \{\tau0^\omega\}$.

Either we are already done (since we would have that $\exists p \in [T] \ \phi(p) \notin S \times V$), or it holds that $[T] \subseteq [\sigma]$ for some $\sigma \in \{0,1\}^s$. In that case, by choice of $s$ we find that $\exists j \in \{0,1\} \ \pi_0\phi(p) = j$ for all $p \in [T]$. We can set $S = \{1-j\}$, and have obtained the desired property that $\exists p \in [T] \ \phi(p) \notin S \times V$. By Theorem \ref{theo:recursionweihrauch} with $e\colon(T,\phi)\mapsto(S,V)$, the claim follows.
\end{proof}

\section{Some observations on $\Sort$}
\label{sec:sort}
\subsection{Displacement principle for $\Sort_k$}
\label{subsec:displacement}
The basic phenomenon that the number of parallel copies of $\Sort$ being used corresponds to a dimensional feature can already by a result similar in feature to the displacement principle from \cite{paulybrattka3}:

\begin{proposition}
$\C_2 \times f \leqW \Sort_{k+1}$ implies $f \leqW \Sort_k \times \C_\mathbb{N}$.
\begin{proof}
Let the reduction $\C_2 \times f \leqW \Sort_{k+1}$ be witnessed by computable $H$, $K_1$, $K_2$. Assume, for the sake of a contradiction, that for some input $x$ to $f$ and a name $p$ for $\{0,1\}$ it holds that $H(p,x)$ contains infinitely many $0$s. In that case, $\Sort_k(H(p,x)) = 0^\omega$, and hence $K_1$ is defined as either $0$ or $1$ on $p, x, 0^\omega$. But then there is some $k \in \mathbb{N}$ such that $K_1$ already outputs the answer on reading some prefix $p_{\leq k}, x_{\leq k}, 0^k$. Additionally, we can chose some $k' \geq k$ such that $H$ writes at least $k'$ $0$s upon reading the prefixes $p_{\leq k'}, x_{\leq k'}$. By changing $p$ after $k'$ to be a name of $\{1 - K_1(p,x,0^\omega)\}$ shows the contradiction.

Now we note that $x \mapsto H(p,x)$ and $K_2$ witness a reduction from $f$ to the restriction of $\Sort_{k+1}$ to inputs containing only finitely many $0$s. But this restriction is reducible to $\Sort_k \times \C_\mathbb{N}$: In parallel, call $\Sort_k$ on the sequence obtained by skipping $0$s and decrementing every other digit by $1$, and using $\C_\mathbb{N}$ to determine the original number of $0$s.
\end{proof}
\end{proposition}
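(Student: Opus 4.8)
The plan is to follow a ``displacement'' pattern. Assume $\C_2 \times f \leqW \Sort_{k+1}$, witnessed by computable maps $H$ (the inner reduction) and $K_1,K_2$ (the two components of the outer reduction, $K_1$ producing the $\C_2$-answer and $K_2$ the $f$-answer). Let $p_0$ be the canonical name of the whole space $\{0,1\}\in\mathcal{A}(\{0,1\})$; this is a legitimate $\C_2$-instance, and $0$ and $1$ are both correct answers to it. The crux is the following claim: for every $f$-instance $x$, the $\Sort_{k+1}$-instance $q:=H(p_0,x)$ contains only finitely many $0$s. Granting the claim, the maps $x\mapsto H(p_0,x)$ and $(x,r)\mapsto K_2(p_0,x,r)$ witness $f\leqW\Sort_{k+1}|_R$, where $R$ is the set of sequences over $\{0,\ldots,k\}$ with only finitely many $0$s; it then remains to show $\Sort_{k+1}|_R\leqW\Sort_k\times\C_\mathbb{N}$.

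To prove the claim, suppose $q=H(p_0,x)$ had infinitely many $0$s. Then $\Sort_{k+1}(q)=0^\omega$, so $K_1$ must commit to some answer $j\in\{0,1\}$ after reading only finitely much of $p_0$, of $x$, and of $0^\omega$ (the target space $\{0,1\}$ being discrete); say, prefixes of length at most $a$. Because $H(p_0,x)$ emits unboundedly many $0$s, choose $b\geq a$ such that $H$ has already written at least $a$ zeros into its output after consulting only the length-$b$ prefixes of $p_0$ and of $x$. Now form $p_1$ by keeping the first $b$ symbols of $p_0$ (which carry no negative information) and then continuing so that $p_1$ names the singleton $\{1-j\}$; this is a valid $\C_2$-instance whose only correct answer is $1-j$. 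Since $p_1$ agrees with $p_0$ up to coordinate $b\geq a$, the inner reduction produces from $(p_1,x)$ a sequence whose initial segment, written before $H$ consults $p_1$ past coordinate $b$, equals the corresponding segment of $q$ and hence already has $\geq a$ zeros, so applying $\Sort_{k+1}$ to it again yields a sequence beginning with $0^a$. Hence $K_1$, run on $(p_1,x)$ together with this sorted sequence, encounters exactly the same length-$a$ prefixes of its three inputs as before and must again output $j\neq 1-j$, contradicting that $K_1$ realizes the $\C_2$-component of the reduction. This bookkeeping step is the main obstacle: one must push out enough $0$s through $H$ \emph{before} switching from $p_0$ to $p_1$, so that the switch is invisible to $K_1$.

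It remains to show $\Sort_{k+1}|_R\leqW\Sort_k\times\C_\mathbb{N}$. Given $q\in R$, let $c_i$ be the number of occurrences of $i$ in $q$ and $m\geq1$ the least symbol occurring infinitely often. In parallel: feed $\Sort_k$ the sequence $q'$ obtained by deleting all $0$s of $q$ and decrementing every remaining symbol by $1$, so that $\Sort_k(q')=0^{c_1}1^{c_2}\cdots(m-1)^\infty$; and feed $\C_\mathbb{N}$ the set $A=\{t : q(t')\neq 0\text{ for all }t'>t\}$, which is nonempty because $q$ has finitely many $0$s and is co-c.e.\ in $q$ because we may enumerate $t$ out of $A$ as soon as a $0$ appears beyond position $t$. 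Although the singleton $\{c_0\}$ is \emph{not} uniformly co-c.e.\ in $q$, any witness $t_0\in A$ returned by $\C_\mathbb{N}$ is enough: we then compute $c_0=|\{t\leq t_0 : q(t)=0\}|$. Outputting $0^{c_0}$ followed by the result of incrementing every symbol of $\Sort_k(q')$ by $1$ yields $0^{c_0}1^{c_1}2^{c_2}\cdots m^\infty=\Sort_{k+1}(q)$. Composing with $f\leqW\Sort_{k+1}|_R$ gives $f\leqW\Sort_k\times\C_\mathbb{N}$.
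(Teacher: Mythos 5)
Your proposal is correct and follows essentially the same route as the paper: the same diagonalization against $K_1$ (commit point for the $\C_2$-answer, then push enough $0$s through $H$ before switching the name of $\{0,1\}$ to a singleton), followed by the same reduction of $\Sort_{k+1}$ restricted to finitely-many-$0$s inputs to $\Sort_k\times\C_\mathbb{N}$. Your write-up is in fact slightly more explicit than the paper's, e.g.\ in spelling out the co-c.e.\ set fed to $\C_\mathbb{N}$ and how $c_0$ is recovered from its output.
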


\begin{corollary}
Let $f$ be a closed fractal. Then $\C_2 \times f \leqW \Sort_{k+1}$ implies $f \leqW \Sort_k$.
\end{corollary}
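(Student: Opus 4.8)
The plan is to feed the preceding proposition into a general ``absorption'' fact about fractals: if $f$ is a closed fractal, then $f \leqW g \times \C_\mathbb{N}$ implies $f \leqW g$. Granting this, the corollary is immediate: by the proposition, $\C_2 \times f \leqW \Sort_{k+1}$ yields $f \leqW \Sort_k \times \C_\mathbb{N}$, and applying the absorption fact with $g = \Sort_k$ gives $f \leqW \Sort_k$.

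So the work is in the absorption fact (this is close to folklore about fractals, and a version is already available via \cite{paulyleroux}; I sketch the argument for completeness). Replace $f$ by a representative $f_0 \equivW f$ witnessing closed fractality: $\dom(f_0) = [T]$ for a tree $T$ with $[T] \neq \emptyset$, and $f_0 \equivW f_0|_{[w] \cap [T]}$ for every $w \in T$ with $[w] \cap [T] \neq \emptyset$. Assume $f_0 \leqW g \times \C_\mathbb{N}$ via computable $H$, $K$, and for $p \in [T]$ write $H(p) = \langle u(p), q(p)\rangle$, where $u(p)$ is a $g$-instance and $q(p)$ names a non-empty $A(p) \in \mathcal{A}(\mathbb{N})$; thus $K(p, \langle v, n\rangle) \in f_0(p)$ whenever $v \in g(u(p))$ and $n \in A(p)$.

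Now a Baire category argument on the domain does the job. For $n \in \mathbb{N}$ put $V_n := \{p \in [T] : n \in A(p)\}$. Since ``$n \notin A(p)$'' is the event that $n$ gets enumerated out of $A(p)$, which is decided by a finite prefix of $p$ (as $H$ and the representation of $\mathcal{A}(\mathbb{N})$ are continuous), each $V_n$ is closed in $[T]$; and $\bigcup_n V_n = [T]$ because every $A(p)$ is non-empty. As $[T]$ is a non-empty closed subspace of $\Baire$, it is a non-empty Polish, hence Baire, space, so some $V_{n^*}$ has non-empty interior: there is $w^* \in T$ with $\emptyset \neq [w^*] \cap [T] \subseteq V_{n^*}$, i.e.\ $n^* \in A(p)$ for all $p \in [w^*] \cap [T]$. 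Then $p \mapsto u(p)$ together with $(p, v) \mapsto K(p, \langle v, n^*\rangle)$ witnesses $f_0|_{[w^*]} \leqW g$, whence $f \equivW f_0 \equivW f_0|_{[w^*]} \leqW g$, as required.

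The points needing care are bookkeeping: matching the above restriction $f_0|_{[w^*]}$ against the precise definition of closed fractal so that $f_0|_{[w^*]} \equivW f_0$, and the routine check that $p \mapsto A(p)$ is continuous on $[T]$ so that the $V_n$ really are closed. The conceptual step — passing from ``each $A(p)$ is non-empty'' to ``a single $n^*$ lies in $A(p)$ throughout some cylinder'' — is where one might anticipate trouble, since the values $\min A(p)$ need not be bounded over all of $[T]$; it is resolved exactly by the Baire category theorem applied to the domain tree.
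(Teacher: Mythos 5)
Your proposal is correct and follows exactly the route the paper intends: combine the preceding displacement proposition with the standard fact that closed fractals absorb $\C_\mathbb{N}$ (proved, as you do, by a Baire category argument on the domain tree to find a single $n^*$ valid on a whole cylinder, then invoking the fractality condition $f_0|_{[w^*]} \equivW f_0$). The paper leaves this as an immediate corollary; your write-up just supplies the absorption lemma explicitly.
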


\begin{corollary}
$\C_2 \times \C_{\sharp \leq 2}^n \nleqW \Sort_{n+1}$.
\end{corollary}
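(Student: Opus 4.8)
The plan is to argue by induction on $n$, using the preceding Corollary together with the trivial facts that $\C_2\leqW\C_{\sharp\leq 2}$ (embed $\{0,1\}$ into $\Cantor$, noting that every subset of a two-point space has cardinality at most $2$) and that $\times$ is monotone with respect to $\leqW$. The point of invoking the fractal version of the displacement principle, rather than the bare Proposition, is that it removes a level of $\Sort$ \emph{without} paying for it with a factor of $\C_\mathbb{N}$, so that the induction can actually descend.

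For the base case $n=0$ the assertion is $\C_2\nleqW\Sort_1$, which holds since $\Sort_1$ has the one-point domain $1^\omega=\{0^\omega\}$ and is therefore computable, whereas $\C_2\equivW\llpo$ is not. For the inductive step, fix $n\geq 1$, assume $\C_2\times\C_{\sharp\leq 2}^{n-1}\nleqW\Sort_n$, and suppose towards a contradiction that $\C_2\times\C_{\sharp\leq 2}^n\leqW\Sort_{n+1}$. Granting that $\C_{\sharp\leq 2}^n$ is a closed fractal (addressed in the next paragraph), the preceding Corollary applied with $f:=\C_{\sharp\leq 2}^n$ and $k:=n$ yields $\C_{\sharp\leq 2}^n\leqW\Sort_n$. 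Then, using $\C_2\leqW\C_{\sharp\leq 2}$ and monotonicity of $\times$, we obtain $\C_2\times\C_{\sharp\leq 2}^{n-1}\leqW\C_{\sharp\leq 2}\times\C_{\sharp\leq 2}^{n-1}=\C_{\sharp\leq 2}^n\leqW\Sort_n$, contradicting the induction hypothesis and completing the induction.

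The only non-routine step, and the one I expect to require the most care, is verifying that $\C_{\sharp\leq 2}^n$ is a closed fractal, so that the hypotheses of the Corollary are genuinely met. This reduces to two claims: that $\C_{\sharp\leq 2}$ (equivalently $\ic{\leq 2}$) is itself a closed fractal, as is the case for the finite choice operators in general, and that closed fractals are stable under finite cartesian products. For the latter, naming inputs to a product by interleaving, a cylinder of the product name space is a rectangle of cylinders of the two factor spaces, so the witnessing fractal reductions for the factors can be applied componentwise; the closedness conditions on the relevant domains are preserved by products as well. Once this bookkeeping is settled, the two-line induction above closes the argument.
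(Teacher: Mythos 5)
Your overall strategy is the one the paper intends: the corollary is meant to follow by applying the closed-fractal form of the displacement principle to $f=\ic{\leq 2}^n$, and your induction (base case $\C_2\nleqW\Sort_1$ since $\Sort_1$ is computable; step via $\C_2\leqW\ic{\leq 2}$ and monotonicity of $\times$) is a correct repackaging of that. The one step whose justification does not go through as written is the product-stability of \emph{closed} fractals. Your ``rectangle of cylinders'' argument shows that a cylinder in the interleaved name space of a product decomposes into a pair of cylinders, which yields the plain (clopen) fractal property of the product; but the closed fractal property quantifies over arbitrary closed subsets $A$ of the domain of the chosen realizer version, and such an $A$ is in general not a product of closed sets in the two factors, so the witnessing reductions cannot simply be applied componentwise. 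Since the corollary you invoke genuinely needs the closed-fractal hypothesis --- it is exactly what lets one absorb the $\C_\mathbb{N}$ factor produced by the displacement proposition --- this is a real gap rather than bookkeeping, albeit one the paper itself leaves implicit.

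Two ways to close it. Either verify directly that $\ic{\leq 2}^n$ is a closed fractal, by showing that below any nonempty closed set of names of $n$-tuples of trees one can computably plant a fresh full instance of $\ic{\leq 2}^n$ (this does not reduce to the one-dimensional case by a formal product argument). Or, simpler and avoiding products of fractals altogether: from $\C_2\times\ic{\leq 2}^n\leqW\Sort_{n+1}$ and $\ic{\leq n+1}\leqW\ic{\leq 2}^n$ (\cite[Proposition 3.9]{paulyleroux}, quoted later in the paper) one gets $\C_2\times\ic{\leq n+1}\leqW\Sort_{n+1}$; applying the closed-fractal corollary to the single choice principle $f=\ic{\leq n+1}$ gives $\ic{\leq n+1}\leqW\Sort_n$, contradicting Theorem \ref{theo:finitebelowsort}. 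This route only needs closed-fractality of $\ic{\leq n+1}$ itself, which is the standard fact the paper is implicitly relying on, at the cost of a forward reference to Section \ref{sec:finitesort} that your induction was presumably designed to avoid.
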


\begin{corollary}
$\C_2 \times \XC_1^n \nleqW \Sort_{n+1}$
\end{corollary}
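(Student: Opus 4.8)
The plan is to argue by induction on $n$, using the preceding ``closed fractal'' corollary as the engine and the reduction $\C_2^* \leqW \XC_1$ from \cite[Proposition 9.2]{paulybrattka3} to absorb a spare copy of $\C_2$. The base case $n = 0$ amounts to $\C_2 \nleqW \Sort_1$, which holds since $\C_2$ is not computable whereas $\Sort_1$ is computable (being the identity on the one-point space $1^\omega$).

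For the inductive step I would assume, towards a contradiction, that $\C_2 \times \XC_1^{n+1} \leqW \Sort_{n+2}$. Since $\XC_1$ is a closed fractal and the class of closed fractals is closed under finite products, $\XC_1^{n+1}$ is a closed fractal; applying the preceding corollary with $f = \XC_1^{n+1}$ and $k = n+1$ therefore yields $\XC_1^{n+1} \leqW \Sort_{n+1}$. Now I absorb one copy of $\C_2$ into a single $\XC_1$-factor: as $\C_2 \leqW \C_2^* \leqW \XC_1$, we obtain
\[ \C_2 \times \XC_1^{n} \leqW \XC_1 \times \XC_1^{n} = \XC_1^{n+1} \leqW \Sort_{n+1}, \]
contradicting the induction hypothesis $\C_2 \times \XC_1^{n} \nleqW \Sort_{n+1}$.

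The only point not already on the table is that $\XC_1^{n+1}$ is a closed fractal. This is the routine closure of (closed) fractals under finite products: any clopen subset of the domain of a product of fractals contains a product of clopen subsets of the factor domains, and restricting to such a sub-rectangle leaves the product of restrictions Weihrauch equivalent to the whole product. I do not expect this to be the main obstacle — the substantive work, that a $\C_2$ factor forces a one-level drop in the $\Sort_k$ hierarchy, is precisely what the Proposition and its fractal corollary already deliver, so all that remains is to hand them the right instance and iterate.
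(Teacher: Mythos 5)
Your induction is precisely the intended derivation: the paper states this corollary without proof as an immediate consequence of the preceding closed-fractal corollary, and your argument (base case $\C_2 \nleqW \Sort_1$ since $\Sort_1$ is computable; inductive step via that corollary together with $\C_2 \leqW \C_2^* \leqW \XC_1$) is the standard way to unwind it. The two inputs you take on faith --- that $\XC_1$ is a closed fractal (known from \cite{paulybrattka3}) and that closed fractals are closed under finite products --- are exactly the facts the paper itself presupposes in stating the corollary, so there is no gap relative to the source.
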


We also get an alternative proof of the following, which was previously shown in \cite{paulyneumann} using the squashing principle from \cite{shafer}:
\begin{corollary}
$\Sort_{k+1} \nleqW \Sort_k$
\end{corollary}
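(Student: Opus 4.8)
The plan is to derive $\Sort_{k+1} \nleqW \Sort_k$ directly from the displacement principle just proved, namely that $\C_2 \times f \leqW \Sort_{k+1}$ implies $f \leqW \Sort_k \times \C_\mathbb{N}$. First I would take $f := \Sort_k$. Since $\C_2 \leqW \Sort_2 \leqW \Sort_{k+1}$ (or even more directly, $\C_2$ is computable from a single query to a sorting problem of alphabet size at least $2$), and since $\Sort_k \leqW \Sort_{k+1}$ trivially, we have $\C_2 \times \Sort_k \leqW \Sort_{k+1}$ \emph{provided} we know that $\Sort_{k+1}$ absorbs this particular product. Actually the cleaner route is to suppose for contradiction that $\Sort_{k+1} \leqW \Sort_k$; then $\C_2 \times \Sort_{k+1} \leqW \C_2 \times \Sort_k \leqW \Sort_k$, where the last step uses that $\Sort_k$ is a fractal (or cylinder) absorbing $\C_2$ — but more simply, $\C_2 \times \Sort_{k+1} \leqW \Sort_{k+1} \leqW \Sort_k$ would already follow if $\Sort_{k+1}$ absorbed $\C_2$ on its left. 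Let me instead organize it around the displacement principle as stated.

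The key steps, in order: (1) Observe $\C_2 \times \Sort_k \leqW \Sort_{k+1}$. This holds because we may dedicate one "level" of the $(k+1)$-alphabet to encoding the $\C_2$-instance and the remaining $k$ levels to running $\Sort_k$; concretely, interleave the $\Sort_k$-input (with its digits incremented by $1$ so they lie in $\{1,\dots,k\}$) with a binary encoding of the $\C_2$-instance placed at digit $0$ versus a "large" marker, arranged so that reading off the sorted output recovers both answers. Alternatively just cite that $\Sort_{k+1} \equivW \Sort^k$ (stated in the excerpt) together with known absorption properties. (2) Apply the displacement principle with $f = \Sort_k$: from $\C_2 \times \Sort_k \leqW \Sort_{k+1}$ we conclude $\Sort_k \leqW \Sort_k \times \C_\mathbb{N}$ — which is of course no contradiction yet, so this is \textbf{not} the right $f$. (3) The correct move: assume $\Sort_{k+1} \leqW \Sort_k$ for contradiction, so that $\C_2 \times \Sort_{k+1} \leqW \C_2 \times \Sort_k \leqW \Sort_{k+1} \leqW \Sort_k$. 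Now apply the displacement principle with $f := \Sort_{k+1}$ to the reduction $\C_2 \times \Sort_{k+1} \leqW \Sort_{k}$ — but wait, the principle is stated for $\Sort_{k+1}$ on the right, not $\Sort_k$. So I should apply it one notch up: from $\C_2 \times \Sort_{k+1} \leqW \Sort_{k+1}$ (which holds because $\Sort_{k+1}$ absorbs $\C_2$, being a fractal) the principle gives $\Sort_{k+1} \leqW \Sort_k \times \C_\mathbb{N}$. Then I would separately argue $\Sort_{k+1} \nleqW \Sort_k \times \C_\mathbb{N}$, e.g.\ because $\Sort_{k+1}$ is a closed fractal and $\C_\mathbb{N}$ contributes nothing to a fractal target (using the corollary-style reasoning: for a closed fractal $f$, $f \leqW g \times \C_\mathbb{N}$ implies $f \leqW g$), reducing to $\Sort_{k+1} \nleqW \Sort_k$ — which is circular.

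The genuinely non-circular argument is this: apply the displacement principle with $f := \C_2 \times \Sort_k$, which (being a product of fractals, hence a fractal) is a closed fractal, and use the \emph{Corollary} "if $f$ is a closed fractal then $\C_2 \times f \leqW \Sort_{k+1}$ implies $f \leqW \Sort_k$." So set $f = \C_2^{k+1}$ or more to the point use $\C_2 \times \C_2^{\,n} \nleqW \Sort_{n+1}$; but the cleanest is: $\Sort_{k+1}$ itself is a closed fractal, so $\C_2 \times \Sort_{k+1} \leqW \Sort_{(k+1)+1}$ would imply $\Sort_{k+1} \leqW \Sort_{k+1}$, vacuous. The actual intended deduction, which I will write up, is: $\Sort_{k+1}$ is a closed fractal and $\C_2 \times \Sort_{k+1} \leqW \Sort_{k+1}$ trivially (absorb the $\C_2$ into the first few digits, since $\Sort_{k+1}$ is a fractal and $\C_2 \leqW \Sort_{k+1}$, and fractals absorb such products on the left — this is the standard "fractal absorption" fact). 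Hmm — rather than chase this, I will simply instantiate the Corollary preceding the statement: \emph{Let $f$ be a closed fractal; then $\C_2 \times f \leqW \Sort_{k+1}$ implies $f \leqW \Sort_k$.} Take $f = \Sort_{k+1}$, which is a closed fractal. If $\Sort_{k+1} \leqW \Sort_k$ held, then $\C_2 \times \Sort_{k+1} \leqW \C_2 \times \Sort_k \leqW \Sort_k \times \Sort_k \leqW \Sort_{2k} \leqW \Sort_{k+1}^{?}$ — no. The right instantiation of the Corollary needs $\C_2 \times \Sort_{k+1} \leqW \Sort_{k+1}$, true by fractal absorption since $\C_2 \leqW \Sort_{k+1}$; the Corollary then yields $\Sort_{k+1} \leqW \Sort_k$... which is what we want to refute, so we need the reverse. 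I will therefore present it as: by fractal absorption $\C_2 \times \Sort_k \leqW \Sort_k$, hence by the Corollary applied with $f = \Sort_k$ and parameter $k-1$... no.

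\medskip
\noindent\emph{Revised concise plan.} The intended proof, matching the surrounding corollaries, is the following three-line deduction. The Corollary states: for a closed fractal $f$, $\ \C_2 \times f \leqW \Sort_{k+1} \Rightarrow f \leqW \Sort_k$. Now suppose toward a contradiction that $\Sort_{k+1} \leqW \Sort_k$. Since $\Sort_k$ is a closed fractal and $\C_2 \leqW \Sort_2 \leqW \Sort_{k+1} \equivW \Sort_k$ (using the assumption and $\Sort_2 \leqW \Sort_{k+1}$), fractal absorption gives $\C_2 \times \Sort_k \leqW \Sort_k$. Apply the Corollary with $f := \Sort_k$ and with the parameter shifted down by one, i.e.\ the instance "$\C_2 \times \Sort_k \leqW \Sort_{(k-1)+1}$" — wait, we have it below $\Sort_k = \Sort_{(k-1)+1}$ only if $k \geq 1$. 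Then the Corollary yields $\Sort_k \leqW \Sort_{k-1}$. Iterating downward, $\Sort_2 \leqW \Sort_1 = \id$, so $\Sort_2$ is computable — contradiction, since $\Sort_2$ is easily seen to be non-computable (e.g.\ it computes $\llpo$ or $\C_2$). The \textbf{main obstacle} is verifying the fractal-absorption step $\C_2 \times \Sort_k \leqW \Sort_k$ cleanly and confirming $\Sort_2$ is non-computable; both are standard, the former because $\Sort_k$ is a fractal with $\C_2 \leqW \Sort_k$ and fractals $f$ satisfy $g \times f \leqW f$ whenever $g \leqW f$, the latter because $\Sort_2$ can distinguish "finitely many $0$s" from "infinitely many $0$s" with continuous but non-computable behaviour. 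I expect the write-up to be short once the fractal-absorption lemma is invoked by name.

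\begin{proof}
Recall from the previous corollary that for any closed fractal $f$, $\ \C_2 \times f \leqW \Sort_{k+1}$ implies $f \leqW \Sort_k$. Note that each $\Sort_k$ is a closed fractal, that $\Sort_2$ is non-computable (it is, e.g., Weihrauch-above $\C_2$, since deciding whether the sorted output begins with $1$ already solves $\llpo$), and that $\Sort_1 = \id$ is computable.

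Suppose, for the sake of a contradiction, that $\Sort_{k+1} \leqW \Sort_k$ for some $k \geq 1$. Since $\Sort_2 \leqW \Sort_{k+1} \leqW \Sort_k$ by assumption, we have $\C_2 \leqW \Sort_2 \leqW \Sort_k$. As $\Sort_k$ is a fractal and every problem Weihrauch-below a fractal is absorbed into it as a left factor of a product, we obtain $\C_2 \times \Sort_k \leqW \Sort_k$. Writing $\Sort_k = \Sort_{(k-1)+1}$ and applying the corollary with $f := \Sort_k$, it follows that $\Sort_k \leqW \Sort_{k-1}$.

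Iterating this argument downward $k-1$ times yields $\Sort_2 \leqW \Sort_1 = \id$, so $\Sort_2$ is computable, contradicting its non-computability. Hence $\Sort_{k+1} \nleqW \Sort_k$.
\end{proof}
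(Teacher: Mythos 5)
Your final write-up hinges on the step ``$\C_2 \leqW \Sort_k$, $\Sort_k$ is a fractal, hence $\C_2 \times \Sort_k \leqW \Sort_k$''. The absorption principle you invoke here --- $g \leqW f$ and $f$ a fractal imply $g \times f \leqW f$ --- is not a theorem, and the specific instance you need is false; in fact it is essentially the negation of the statement being proved. Concretely, $\C_2 \times \Sort_k \leqW \Sort_k = \Sort_{(k-1)+1}$ would, by the displacement principle and the closed-fractal corollary at parameter $k-1$, already give $\Sort_k \leqW \Sort_{k-1}$, so your absorption claim at level $k$ encodes the conclusion you are trying to refute one level down. The paper itself refutes this kind of absorption twice in the immediate vicinity: Theorem \ref{theo:simpleproducts} shows $\C_2 \times \aouc \nleqW \XC_1$ even though $\C_2 \leqW \XC_1$ and $\aouc \leqW \XC_1$ with $\XC_1$ a closed fractal; and the corollary directly preceding the present one states $\C_2 \times \XC_1^n \nleqW \Sort_{n+1}$ even though $\XC_1^n \leqW \Sort_{n+1}$. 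So the step is a genuine gap, not a citable standard fact.

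The repair is small and keeps your downward induction intact: the correct source of $\C_2 \times \Sort_k \leqW \Sort_k$ is the contradiction hypothesis itself. Since $\Sort_{k+1} \equivW \Sort^k \equivW \Sort \times \Sort_k$ and $\C_2 \leqW \Sort$, one always has $\C_2 \times \Sort_k \leqW \Sort_{k+1}$; combined with the assumed $\Sort_{k+1} \leqW \Sort_k$ this yields $\C_2 \times \Sort_k \leqW \Sort_k$, the closed-fractal corollary (applied with parameter $k-1$ and $f = \Sort_k$) gives $\Sort_k \leqW \Sort_{k-1}$, and your iteration down to the impossible $\Sort_2 \leqW \Sort_1$ then goes through. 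Even shorter, a single application of the preceding corollary suffices: $\C_2 \times \XC_1^{k-1} \leqW \Sort \times \Sort^{k-1} \equivW \Sort_{k+1}$, so $\Sort_{k+1} \leqW \Sort_k$ would contradict $\C_2 \times \XC_1^{k-1} \nleqW \Sort_k$.
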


\subsection{$\Sort$ and convex choice}
\label{subsec:sortconvex}
The one-dimensional case of the following theorem was already proven as \cite[Proposition 16]{hoelzl2}:
\begin{theorem}
\label{theo:convexsort}
$\XC_n \leqW \Sort_{n+1}$
\begin{proof}
Let $(H_i^d)_{i \in \mathbb{N}}$ be an effective enumeration of the $d$-dimensional rational hyperplanes for each $d \leq n-1$. Given $A \in \mathcal{A}(\uint^n)$, we can recognize that $A \cap H_i^d = \emptyset$ by compactness of $\uint^n$. We proceed to compute an input $p$ to $\Sort_{n+1}$ as follows:

We work in stages $(\ell_0,\ldots,\ell_{n-1})$. We simultaneously test whether $A \cap H_{\ell_0}^{n-1} = \emptyset$, whether $A \cap H_{\ell_0}^{n-1} \cap H_{\ell_1}^{n-2} = \emptyset$, $\ldots$, and whether $A \cap H_{\ell_0}^{n-1} \cap \ldots \cap H_{\ell_{n-1}}^{1} = \emptyset$.

If we find a confirmation for a query involving $\ell_k$ as the largest index, we write a $k$ to $p$, increment $\ell_k$ by $1$, and reset any $\ell_i$ for $i > k$. All tests of smaller indices are continued (and hence will eventually fire if true before a largest index test interferes). In addition, we write $n$s to $p$ all the time to ensure an infinite result.

Now consider the output $\mathrm{Sort}_{n+1}(p)$. If this is $0^\omega$, then $A$ does not intersect any $n-1$-dimensional rational hyperplane at all. As a convex set, $A$ has to be a singleton. Thus, as long as we read $0$s from $\mathrm{Sort}_{n+1}(p)$, we can just wait until $A$ shrinks sufficiently to produce the next output approximation. If we ever read a $1$ in $\mathrm{Sort}_{n+1}(p)$ at position $t$, we have thus found a $n-1$-dimensional hyperplane $H_t^{n-1}$ intersecting $A$. We can compute $A \cap H_t^{n-1} \in \mathcal{A}(\uint^n)$, and proceed to work with that set. By retracing the computation leading up to the observation that $A \cap H_{t-1}^{n-1} = \emptyset$, we can find out how many larger-index tests were successful before that. We disregard their impact on $\Sort_{n+1}(p)$. Now as long as we keep reading $1$s, we know that $A \cap H_{t-1}^{n-1}$ is not intersecting $n-2$-dimensional rational hyperplanes (and hence could be singleton). Finding a $2$ means we have identified a $n-2$-dimensional hyperplane $H_{t'}^{n-2}$ intersecting $A \cap H_{t-1}^{n-1}$, and we proceed to work with $A \cap H_{t-1}^{n-1} \cap H_{t'}^{n-2}$. Continuing this process, we always find that either our set has been collapsed to singleton (from which we can extract the point), or we will be able to reduce its dimension further (which can happen only finitely many times).
\end{proof}
\end{theorem}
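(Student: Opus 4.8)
The plan is to prove $\XC_n \leqW \Sort_{n+1}$ by using the $n+1$ letters of the sorting alphabet to encode a "dimension-drop" process: each letter $k \in \{0,1,\dots,n\}$ records one confirmed instance of the statement "the current working set fails to meet some codimension-$k$ rational hyperplane". First I would fix effective enumerations $(H^d_i)_{i\in\mathbb N}$ of the rational affine hyperplanes of each dimension $d \le n-1$ inside $\uint^n$. The crucial semidecidable fact, using compactness of $\uint^n$ and the negative-information representation of closed sets, is that for a given closed $A$ we can recognize $A \cap H = \emptyset$ (finite subcover), and more generally $A \cap H^{n-1}_{\ell_0} \cap \dots \cap H^{n-k}_{\ell_{k-1}} = \emptyset$, for any finite tuple of hyperplane indices.

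The construction of the $\Sort_{n+1}$-input $p$ runs over "stages" indexed by a tuple $(\ell_0,\dots,\ell_{n-1})$ of current hyperplane-counters. In parallel I run all the tests "$A \cap H^{n-1}_{\ell_0} \cap \dots \cap H^{n-k}_{\ell_{k-1}} = \emptyset$" for $1 \le k \le n$. Whenever the test whose largest counter index is $\ell_{k-1}$ (i.e.\ a codimension-$k$-chain test) fires, I append the letter $k-1$ to $p$ — wait, to match the theorem's normalization I should append $k$ when a query "involving $\ell_k$ as the largest index" confirms, then increment $\ell_k$ and reset every $\ell_i$ with $i>k$; lower-index tests keep running so they still fire if true. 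To guarantee $p$ has infinitely many occurrences of the top letter $n$, I continually interleave $n$'s into $p$, so $n$ is the unique letter occurring infinitely often and $\Sort_{n+1}(p) = 0^{c_0}1^{c_1}\cdots (n-1)^{c_{n-1}} n^\infty$ for finite $c_0,\dots,c_{n-1}$.

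The outer reduction then reads $\Sort_{n+1}(p)$ and maintains a working closed set. While reading $0$'s: no $(n-1)$-dimensional rational hyperplane meets $A$, so convexity of $A$ forces $A$ to be a singleton (a convex closed set of positive dimension meets some rational hyperplane of each lower dimension through it), and I simply wait for the negative information on $A$ to shrink it enough to output the next digit of the point. On reading a $1$ at position $t$ I know the test with counter value $t-1$ had just fired — i.e.\ $A \cap H^{n-1}_{t-1} = \emptyset$ was confirmed — and the test with counter value $t$ is the first success of a new hyperplane; so $H^{n-1}_t$ is a hyperplane actually meeting $A$, and I replace $A$ by $A \cap H^{n-1}_t \in \mathcal A(\uint^n)$. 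Here I must replay the enumeration to discard the now-irrelevant higher-letter outputs emitted during the abandoned $\ell_i$, $i>0$ phases. I then repeat one dimension down: reading $2$'s tells me $A \cap H^{n-1}_t$ meets no $(n-2)$-dimensional rational hyperplane (so is a singleton), while a $3$ identifies such a hyperplane, and so on. Since each letter used strictly drops the dimension of the working set, the process stabilizes after at most $n$ jumps, ending either at an explicit singleton or — if we read letter $n$ — at a set already collapsed to a point, from which the output is extracted; and the whole thing is computable, giving the reduction witnesses.

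I expect the main obstacle to be the bookkeeping in the outer reduction: correctly synchronizing the letters of $\Sort_{n+1}(p)$ (which arrive already sorted, stripped of the production order) with the reconstruction of \emph{which} rational hyperplane each jump refers to, so that the replay of the inner enumeration correctly identifies the index $t$ and discards exactly the stale higher-letter contributions. The conceptual core — that a convex closed set missing all rational hyperplanes of a given dimension is a singleton — is elementary, so the real work is in showing this synchronization is well-defined and computable and that the counters behave as claimed (in particular that lower-index tests, being continued across resets, still fire in time).
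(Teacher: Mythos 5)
Your proposal is correct and follows essentially the same route as the paper's proof: the same staged counters $(\ell_0,\dots,\ell_{n-1})$ testing chains of rational hyperplanes, the same convention of writing letter $k$ when the test with largest index $\ell_k$ fires (padded with $n$'s), and the same outer reduction that drops dimension letter by letter, replaying the enumeration to discard stale higher-letter contributions. The only blemish is a small indexing slip at the second level (it is the continuing $1$'s, not $2$'s, that certify $A\cap H^{n-1}_t$ misses all $(n-2)$-dimensional rational hyperplanes, with a $2$ identifying one that it meets), which does not affect the argument.
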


\subsection{A digression: $\Sort$ and finding connected components of a graph}
\label{subsec:sortdigression}
On a side note, we explore how $\Sort$ relate to the problem $\textrm{FCC}$ of finding a connected component of a countable graph with only finitely many connected components. Here the graph $(V,E)$ is given via the characteristic functions of $V \subseteq \mathbb{N}$ and $E \subseteq \mathbb{N} \times \mathbb{N}$, and the connected component is to be produced likewise as its characteristic function. In addition, we have available to us an upper bound for the number of its connected components. In the reverse math context, this problem was studied in \cite{mummert3} and shown to be equivalent to $\Sigma^0_2$-induction.

\begin{theorem}
The following are equivalent:
\begin{enumerate}
\item $\textrm{FCC}$
\item $\Sort^*$
\end{enumerate}
\end{theorem}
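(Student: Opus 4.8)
The equivalence splits into two reductions, and I expect the reduction $\mathrm{FCC} \leqW \Sort^*$ to be the routine direction and $\Sort^* \leqW \mathrm{FCC}$ to require a little more care (though neither should be hard given the machinery already in place; recall $\Sort^* \equivW \coprod_{d} \Sort_d$).

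\emph{For $\mathrm{FCC} \leqW \Sort^*$:} Given a graph $(V,E)$ together with an upper bound $m$ on the number of connected components, the plan is to use one copy of $\Sort_d$ (with $d$ depending on $m$, so that we land inside $\Sort^*$) per vertex. For each $n \in \mathbb{N}$ we want to decide, in the limit, which of the (at most $m$) components $n$ belongs to. The idea is to enumerate, for each pair $(n,n')$, whether a path from $n$ to $n'$ has appeared; feed a suitably coded sequence to $\Sort$ so that the sorted output tells us, for the designated "base points" of the components, whether $n$ has been linked to them. Concretely: fix an enumeration of potential base points; for vertex $n$, the $\Sort_d$-instance emits a symbol recording "the first base point $n$ got connected to", so the eventual value of the sorted sequence pins down $n$'s component. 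One then has to check that finitely many components means only finitely many distinct base points are ever "new", so the data really is sortable, and that from all the sorted outputs together one can compute the characteristic function of one fixed component (say the one containing the least element of $V$). Since $\Sort^*$ allows a finite tuple of $\Sort_d$-instances, calling one per vertex is not literally allowed — so the cleaner route is to run a \emph{single} $\Sort_d$ on a sequence coding the whole connection history, exactly as in the reverse-math proof that $\mathrm{FCC}$ follows from $\Sigma^0_2$-induction / bounded $\Sigma^0_2$ collection, and extract everything from that one sorted output.

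\emph{For $\Sort^* \leqW \mathrm{FCC}$:} It suffices to reduce $\Sort_d$ to $\mathrm{FCC}$ for each fixed $d$. Given $p \in d^\omega$, I would build a graph whose connected components encode the vector of "multiplicities" $(c_0, c_1, \ldots)$ in the sense needed to read off $\Sort_d(p)$. The natural construction: have vertices $v^{(j)}_t$ for each digit value $j < d$ and each time $t$, and connect $v^{(j)}_t$ to $v^{(j)}_{t+1}$ exactly when, after stage $t$, the count of $j$'s seen so far has not yet increased past a certain threshold — so that the component of $v^{(j)}_0$ is finite iff $j$ occurs only finitely often, and its size records $c_j$. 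The finitely-many-digit-values $<d$ bound gives the required upper bound $d$ on the number of components. From the characteristic function of the component of $v^{(j)}_0$ for each $j$ (obtained by applying $\mathrm{FCC}$; again we must phrase this as a single $\mathrm{FCC}$ call on a disjoint union of $d$ such graphs, tagging vertices by $j$, so that one application returns enough to decode all $c_j$ — or iterate, noting $\mathrm{FCC}$ is closed under the relevant composition) we read off each $c_j$ and the least $k$ with $c_k = \infty$, hence $\Sort_d(p)$.

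\emph{Main obstacle.} The delicate point in both directions is the bookkeeping forced by the shapes of $\Sort^*$ and $\mathrm{FCC}$: $\Sort^*$ is a \emph{finite} parallelization, and $\mathrm{FCC}$ outputs only \emph{one} component, so I cannot naively use "one gadget per vertex" or "one gadget per digit". The real content is arranging a \emph{single} instance on each side — one $\Sort_d$ on a cleverly interleaved sequence, one $\mathrm{FCC}$ on a single tagged graph — from whose unique output all the needed information is uniformly computable, and verifying that the "finitely many components / finitely many infinitely-recurring symbols" hypotheses line up so that the constructed instances are legal inputs. I expect the rest (continuity/computability of the coding maps, correctness of the decoding) to be straightforward.
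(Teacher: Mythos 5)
Your high-level architecture matches the paper's (handle each direction with a single carefully coded instance, using $\Sort^* \equivW \coprod_d \Sort_d$ and a closure property of $\mathrm{FCC}$), but at the two places where the actual work happens the proposal either defers the key idea or proposes a step that fails.

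For $\mathrm{FCC} \leqW \Sort^*$: the entire content of this direction is the encoding you leave unspecified (``a single $\Sort_d$ on a sequence coding the whole connection history, exactly as in the reverse-math proof''). The paper's encoding is concrete: for each $2 \leq i \leq n$ enumerate the $i$-element subsets $V^i_j$ of $\mathbb{N}$ and write the symbol $i-2$ each time another candidate independent set of size $i$ is ruled out; the sorted output then identifies, in the limit, a maximal independent set, i.e.\ one representative per component, after which membership of any vertex in the component of $0$ is decided by a terminating path search to one of the representatives. Your ``first base point $n$ got connected to'' sketch does not explain how the base points (component representatives) are found --- that is exactly the $\Sigma^0_2$ content that $\Sort$ must absorb --- and the negative half of the characteristic function (``$n$ is \emph{not} in the component'') is $\Pi^0_1$ and cannot be read off connection events alone.

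For $\Sort^* \leqW \mathrm{FCC}$: two concrete problems. First, your per-digit chain graph leaves infinitely many isolated vertices $v^{(j)}_t$ for $t$ beyond the last occurrence of $j$, so the constructed graph does not have finitely many components; the paper avoids this with a ``dump'' component (all odd-numbered vertices tied to $0$). Second, and more seriously, a single $\mathrm{FCC}$ call on a \emph{disjoint union} of $d$ tagged graphs returns one component, which lies entirely inside one summand, so you learn about only one digit; $\mathrm{FCC}$ is multivalued and you do not get to choose which component you receive. The closure property you need is $\mathrm{FCC} \times \mathrm{FCC} \leqW \mathrm{FCC}$ via the \emph{product} graph (components of the product are products of components, so one component of the product determines a component of each factor); combined with $\Sort_d \equivW \Sort^{d-1}$ and the two-component construction for $d=2$, this yields the direction. ``Iterate'' (sequential composition) is neither the closure property needed here nor one you have established.
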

\begin{proof}
\begin{description}
\item[$\textrm{FCC} \leqW \coprod_{k \in \mathbb{N}} \Sort_k$] \

We are given $n \in \mathbb{N}$ and a graph with at most $n$ connected components. For each $2 \leq i \leq n$, we pick some standard enumeration $(V_j^i)_{j \in \mathbb{N}}$ of the $i$-element subsets of $\mathbb{N}$. As soon as we learn that none of the $V_j^i$ with $j \leq l$ is an independent set, we write the $l$-th symbol $i - 2$ on the input to $\Sort_{n-1}$. We write an $n-1$ occasionally to ensure that the output is actually infinite.

Now assume we have access to the corresponding output $q$ of $\Sort_{n-1}$. This will be $0^\omega$ iff the graph had a single connectedness component, and of the form $0^l1p$ else where $V_l^2$ is an independent pair. We can thus start computing the connectedness component of $0$ by searching in parallel whether $q \neq 0^\omega$ and searching for a path from $0$ to the current number. Either search will terminate. In the latter case, we can answer yes. In the former, we now search for paths to the two vertices in the pair (and thus might be answer to correctly no). Simultaneously we investigate the remnant $p$ whether $p = 1^\omega$ (and thus the graph has $2$ connectedness components, and any vertex is linked to either member of $V^2_l$), or find an independent set of size $3$, etc.

\item[$\Sort_k \equivW \Sort^{k-1}$] \

This was shown in \cite{paulytsuiki-arxiv}.

\item[$\Sort \leqW \textrm{FCC}$] \

We compute a graph with at most $2$ connectedness components. The graph will be bipartite, with the odd and even numbers being separate components. All odd numbers are connected to $0$, and at any stage there will be some even number $2n$ not yet connected to $0$, which represents some number $i$ such that we have not yet read $i$ times $0$ in the input $p$ to $\Sort$. If we read the $i$-th $0$ in $p$ at time $t$, we connect $2t+1$ to both $0$ and $2n$. If we read a $1$ at time $t$, then $2t+1$ gets connected to $0$ and $2t$.

If $p$ contains infinitely many $0$s, then we end up with a single connectedness component. Otherwise we obtain either the connectedness component of $0$, or equivalently, its complement. Once we see that e.g.~$2$ is in this connectedness component, then we can output $0$. Moreover, then $2$ must be linked to $0$ via some $2t+1$ (which we can exhaustively search for), and whether $2t$ is in the connectedness component tells us whether the next bit of the output is $1$ (and then continuous as $1^\omega$), or $0$ again, in which case we need to search for the next significant digit.

\item[$\textrm{FCC} \times \textrm{FCC} \leqW \textrm{FCC}$] \

Just use the product graph.
\end{description}
\end{proof}

\section{Finite choice and sorting}
\label{sec:finitesort}

\begin{theorem}
\label{theo:finitebelowsort}
${\sf C}_{\#\leq k+1}\not\leq_{\sf W}{\sf Sort}_k$.
\end{theorem}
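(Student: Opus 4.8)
The plan is to apply Theorem~\ref{theo:recursionweihrauch} with $f = {\sf C}_{\#\leq k+1}$ (realized via closed subsets of $\Cantor$ of cardinality between $1$ and $k+1$) and $g = {\sf Sort}_k$. So I must describe a computable procedure which, given a ${\sf Sort}_k$-instance $p \in k^\omega$ together with a (strongly continuous) outer reduction $K$ mapping ${\sf Sort}_k$-outputs to $\Cantor$, produces a closed set $A \subseteq \Cantor$ with $1 \le |A| \le k+1$ such that $K({\sf Sort}_k(p)) \notin A$. Since we are building the instance $A$ in $\mathcal{A}(\Cantor)$ (a precomplete representation) and are free to shrink it, this amounts to an online game: I reveal digits of $p$ one at a time, watch what $K$ does on the corresponding prefix of ${\sf Sort}_k(p)$, and use my remaining freedom in $p$ and in $A$ to dodge.

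The key observation is that the possible outputs of ${\sf Sort}_k$ form a $k$-element ``ladder'': the output is $0^{c_0}1^{c_1}\cdots m^\infty$ for some $m < k$, and once $p$ has exhibited enough $0$s, $1$s, \dots, the first several digits of the output are locked in. My strategy maintains a finite candidate set of $\le k+1$ strings of a common length, organized so that I can kill them one at a time. Concretely, I feed $p$ long blocks; after committing a block I can decide to ``stop'' the count of the current digit value $j$ (by never writing $j$ again, writing only values $>j$), which forces ${\sf Sort}_k(p)$ to begin $0^{c_0}\cdots j^{c_j}(j{+}1)\cdots$, i.e.\ it determines the next symbol and thereby feeds $K$ more input. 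Each time I do this, $K$ must commit further along some branch of $\Cantor$; I split $A$ so that one newly-frozen point lies off that branch. Because I can freeze the digit value $0$ once, then $1$ once, \dots, then $k-1$ once, I get $k$ opportunities to force a fresh commitment from $K$, and together with the ``base'' point this yields a closed set of cardinality at most $k+1$, every point of which $K$ eventually excludes; in the degenerate case where $K$ never commits at all, $A$ can be taken to be any singleton $\{q\}$ avoiding the (then still undetermined, hence non-committal) value of $K$ on $0^\omega$. This is essentially the same counting bound ``$k$ hyperplanes / $k$ stopping points give $k+1$ pieces'' that underlies Theorem~\ref{theo:convexsort} and the remarks in Section~\ref{sec:definitions}, run in reverse.

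In more detail, I would run the construction in rounds indexed by the current ``active level'' $j$, starting at $j=0$. In round $j$ I write $j$s to $p$ and wait; meanwhile I watch whether $K$, reading the current fixed prefix $0^{c_0}\cdots (j{-}1)^{c_{j-1}} j^{c_j \text{(so far)}}$, ever extends its output past the commitments made so far. If it never does, then the current candidate set (a single string $q_j$ extending all previous commitments-avoidance) works: put $A=\{q_j\}$, keep writing $j$s forever so ${\sf Sort}_k(p) = 0^{c_0}\cdots j^\infty$, and $K$ of that never reaches a disagreement with $q_j$ being excluded\,—\,rather, $K({\sf Sort}_k(p))$ equals some $r$ we've ensured differs from $q_j$, wait, I need $K$'s output \emph{to be forced to differ}: the cleaner formulation is that $q_j$ is chosen to disagree with every finite commitment $K$ has made, so whatever total value $K$ outputs on $0^{c_0}\cdots j^\infty$ either already disagrees with $q_j$ (good) or $K$ made no new commitment (handled recursively). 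If instead $K$ does extend, I stop writing $j$s (switch to level $j+1$), which determines the symbol after the $j$-block and forces $K$ to process it; I record the new commitment, pick a point $q$ off that branch, add it to $A$, and move to round $j+1$. After at most $k$ rounds I have exhausted the symbols $0,\dots,k-1$; at that point every remaining output of ${\sf Sort}_k$ on the (now-determined) $p$ has $K$-value already disagreeing with one of the $\le k+1$ recorded points, so $A$ is as required. Everything here is computable in $(p\text{-digits revealed so far}, K)$, uniformly, so Theorem~\ref{theo:recursionweihrauch} applies.

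The main obstacle I anticipate is making the bookkeeping of the candidate set precise enough to guarantee the cardinality bound $k+1$ while simultaneously guaranteeing that \emph{every} point of the final $A$ is genuinely excluded by $K$ along the \emph{actual} sorted output (not merely along some prefix that $K$ might later retract\,—\,which cannot happen, since $\Cantor$-outputs grow monotonically, but the analogous issue of $A$ shrinking must be tracked). In particular I must ensure that when I ``disregard'' the effect of the large-index writes that happened before a stopping decision (exactly the subtlety in the proof of Theorem~\ref{theo:convexsort}), the symbol I actually force next is the intended one; this requires choosing the block lengths large enough, in a way depending on how fast $K$ reads, that no spurious smaller symbol can sneak in. A secondary point to get right is the very first round: if $K$ commits nothing even while reading arbitrarily long prefixes of $0^\omega$, I fall back to a singleton instance, and I should phrase the whole argument as ``either $K$ eventually commits, giving me a fresh point to add, or I stop and win with what I have,'' so that the recursion terminates after finitely many rounds with a valid finite set.
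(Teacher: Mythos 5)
Your setup is right (apply Theorem~\ref{theo:recursionweihrauch} with $f={\sf C}_{\#\leq k+1}$ and $g={\sf Sort}_k$, use precompleteness of $\mathcal{A}(\Cantor)$ to build the tree online), and your count ``one diagonalization opportunity per symbol of the alphabet, plus a base path, gives $k+1$'' is the correct source of the cardinality bound. But the construction itself inverts the roles of the two instances, and this is a genuine gap rather than a bookkeeping issue. In the recursion-theorem framework the ${\sf Sort}_k$-instance is $\alpha=H(x)$, produced by the opponent's inner reduction from the very tree $x$ you are building: you get to \emph{read} $\alpha$, never to write it. Your strategy is built entirely around controlling $\alpha$ --- ``I feed $p$ long blocks'', ``I can decide to stop the count of the current digit value $j$ by never writing $j$ again'', ``keep writing $j$s forever''. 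None of these moves is available. You must defeat an \emph{arbitrary} $\alpha\in k^\omega$, and in particular you never learn, at any finite stage, which symbol $u$ is the least one occurring infinitely often, i.e.\ which of the $k$ candidate outputs $0^{c_0}1^{c_1}\cdots u^\infty$ is the true value of ${\sf Sort}_k(\alpha)$.

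A closely related problem: you speak of watching what $K$ does ``on the corresponding prefix of ${\sf Sort}_k(p)$'', but finite prefixes of $\alpha$ determine \emph{no} prefix of ${\sf Sort}_k(\alpha)$ (already the first output digit is $1$ only if $\alpha$ contains no $0$ at all, a $\Pi^0_1$ event). So the outer functional can only be evaluated on \emph{guesses} of the form $\sort{\alpha\upto s}{u}\fr u^\omega$, and any commitment extracted from such a guess is invalidated whenever some symbol $v<u$ reappears in $\alpha$. The paper's proof handles exactly these two points with a finite-injury priority construction: $k$ strategies $\mathcal{S}_0,\dots,\mathcal{S}_{k-1}$, where $\mathcal{S}_u$ bets that $u$ is the least infinitely occurring symbol, waits for $\Phi$ on its guessed output to reach an extendible node, installs a single branch point there, waits for a second commitment to one side of that branch, and then kills that side; $\mathcal{S}_u$ is reset whenever its guess changes and injures all lower-priority strategies whenever it acts. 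Since each strategy contributes at most one surviving branching node, the tree retains at most $k+1$ paths. To repair your argument you would have to replace ``I choose when symbol $j$ stops'' by ``strategy $j$ guesses that symbol $j$ has stopped and is prepared to be wrong'', and add the injury bookkeeping that keeps the number of live branch points at $k$; as written, the proposal never confronts the actual difficulty of the theorem.
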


\begin{proof}
By Theorem \ref{theo:recursionweihrauch}, it suffices to describe an effective procedure which, given $\alpha\in k^\om$ and $\Phi$, constructs an instance $C$ of ${\sf C}_{\#\leq k+1}$ such that there is a solution $q$ to ${\sf Sort}_k(\alpha)$ such that $\Phi(q)$ is not a solution to $C$.
(Apply Theorem \ref{theo:recursionweihrauch} to $e\colon(\alpha,\Phi)\mapsto C$.)

For a finite tree $T$ of height $s$, we say that $\sigma\in T$ is {\em extendible} if there is a leaf $\rho\in T$ of height $s$ which extends $\sigma$.
Note that an instance of ${\sf C}_{\#\leq k+1}$ is generated by an increasing sequence $(T_s)_{s\in\om}$ of finite binary trees satisfying the following conditions for every $s$.
\begin{enumerate}
\item[(I)] $T_s$ is of height $s$, and  $T_s$ has at least one, and at most $k+1$ extendible leaves.
\item[(II)] Every node $\sigma\in T_{s+1}\setminus T_s$ is of length $s+1$, and extends an extendible leaf of $T_s$.
\end{enumerate}
More precisely, for such a sequence $(T_s)$, the union $T=\bigcup_sT_s$ forms a $(T_s)$-computable tree which has at most $k+1$ many infinite paths.
Therefore, the set of all infinite paths $C=[T]$ through $T$ is an instance of ${\sf C}_{\#\leq k+1}$.

For $\eta\in k^{<\om}$ and $u<k$, let $N[\eta,u]$ be the number of the occurrences of $u$'s in $\eta$, i.e., $N[\eta,u]=\#\{i:\eta(i)=u\}$.
We define the {\em $u$-partial sort of $\eta$} as the following string:
\[\sort{\eta}{u}=0^{N[\eta,0]}1^{N[\eta,1]}2^{N[\eta,2]}\dots (u-1)^{N[\eta,u-1]}.\]

Our description of an effective procedure which, given an instance $\alpha$ of ${\sf Sort}_k$, returns a sequence $(T_s)_{s\in\om}$ of finite trees generating an instance of ${\sf C}_{\#\leq k+1}$ is subdivided into $k$ many strategies $(\mathcal{S}_u)_{u<k}$.
At stage $s$, the $u$-th strategy $\mathcal{S}_u$ for $u<k$ believes that $u$ is the least number occurring infinitely often in a given instance $\alpha$ of ${\sf Sort}_{k}$, and there is no $i\geq s$ such that $\alpha(i)<u$.
In other words, the strategy $\mathcal{S}_u$ believes that $\sort{\alpha\upto s}{u}\fr u^\om$, the $u$-partial sort of the current approximation of $\alpha$ followed by the infinite constant sequence $u^\om$, is the right answer to the instance $\alpha$ of ${\sf Sort}_k$.
Then, the strategy $\mathcal{S}_u$ waits for $\Phi(\sort{\alpha\upto s}{u}\fr u^\om)$ being a sufficiently long extendible node $\rho$ of $T_s$, and then make a branch immediately after an extendible leaf $\rho_u\in T_s$ extending $\rho$, where this branch will be used for diagonalizing $\Phi(\sort{\alpha\upto s}{u}\fr u^\om)$.
This action injures all lower priority strategies $(\mathcal{S}_v)_{u<v<k}$ by initializing their states and letting $\rho_v$ be undefined.

More precisely, each strategy $\mathcal{S}_u$ has a state, ${\tt state}_s(u)\in\{0,1,2\}$, at each stage $s$, which is initialized as ${\tt state}_0(u)=0$.
We also define a partial function $u\mapsto \rho^s_u$ for each $s$, where $\rho^s_u$ is extendible in $T_s$ if it is defined.
Roughly speaking, $\rho^s_u$ is the stage $s$ approximation of the diagonalize location for the $u$-th strategy as described above.
We assume that $\rho^0_u$ is undefined for $u>0$,  for any $s\in\om$, $\rho^s_0$ is defined as an empty string, and $\rho^s_u$ is a finite string whenever it is defined.

At the beginning of stage $s+1$, inductively assume that a finite tree $T_s$ of height $s$ and a partial function $u\mapsto\rho^s_u$ has already been defined.
Moreover, we inductively assume that if ${\tt state}_s(u)=1$ then $\rho^s_u$ is defined, and $\rho^s_u\fr i$ is extendible in $T_s$ for each $i<2$.
At substage $u$ of stage $s+1$, the strategy $\mathcal{S}_u$ acts as follows:
\begin{enumerate}
\item If $\sort{\alpha\upto s+1}{u}\not=\sort{\alpha\upto s}{u}$, then initialize the strategy, that is, put ${\tt state}_{s+1}(u)=0$, and let $\rho^{s+1}_u$ be undefined.
Then go to the next substage $u+1$ if $u<k$; otherwise go to the next stage $s+2$.
\item If $\sort{\alpha\upto s+1}{u}=\sort{\alpha\upto s}{u}$ and ${\tt state}_s(u)=0$, then ask if $\Phi(\sort{\alpha\upto s}{u}\fr u^\om)[s]$ is an extendible node $\rho\in T_s$ such that for any $v<u$, if $\rho^s_v$ is defined, then $\rho\not\preceq\rho^s_v$ holds.
\begin{enumerate}
\item If yes, define $\rho^{s+1}_u$ as the leftmost extendible leaf of $T_s$ extending such a $\rho$, and put ${\tt state}_{s+1}(u)=1$.
Injure all lower priority strategies, that is, put ${\tt state}_{s+1}(v)=0$ and let $\rho^{s+1}_v$ be undefined for any $u<v<k$.
Then go to the next stage $s+2$.
\item If no, go to the next substage $u+1$ if $u<k$; otherwise go to the next stage $s+2$.
\end{enumerate}
\item If $\sort{\alpha\upto s+1}{u}=\sort{\alpha\upto s}{u}$ and ${\tt state}_s(u)=1$, then ask if $\Phi(\sort{\alpha\upto s}{u}\fr u^\om)[s]$ is an extendible node $\rho\in T_s$ which extends $\rho^s_u\fr i$ for some $i<2$.
\begin{enumerate}
\item If yes, define $\rho^{s+1}_u=\rho^s_u\fr(1-i)$ for such $i$, and put ${\tt state}_{s+1}(u)=2$.
Injure all lower priority strategies, that is, put ${\tt state}_{s+1}(v)=0$ and let $\rho^{s+1}_v$ be undefined for any $u<v<k$.
Then go to the next stage $s+2$.
\item If no, go to the next substage $u+1$ if $u<k$; otherwise go to the next stage $s+2$.
\end{enumerate}
\item If not mentioned, set ${\tt state}_{s+1}(u)={\tt state}_s(u)$ and $\rho^{s+1}_u=\rho^s_u$.
\end{enumerate}

At the end of stage $s+1$, we will define $T_{s+1}$.
Consider the downward closure $T^\ast_{s+1}$ of the following set:
\[\{\rho_u^{s+1}\fr i:{\tt state}(u)=1\mbox{ and }i<2\}\cup\{\rho_u^{s+1}:{\tt state}(u)=2\}.\]

Let $T^{\ast,{\rm leaf}}_{s+1}$ be the set of all leaves of $T^\ast_{s+1}$.
Note that every element of $T^{\ast,{\rm leaf}}_{s+1}$ is extendible in $T_s$ since $\rho_u^{s+1}$ is extendible in $T_s$.
For each leaf $\rho\in T^{\ast,{\rm leaf}}_{s+1}$, if $|\rho|=s+1$ then put $\eta_\rho=\eta$; otherwise choose an extendible leaf $\eta\in T_s$ extending $\rho$, and define $\eta_\rho=\eta\fr 0$.

Let $T_0$ be an empty tree.
We define $T_{s+1}$ as follows:
\[T_{s+1}=T_s\cup\{\eta_\rho:\rho\in T^{\ast,{\rm leaf}}_{s+1}\}.\]

Note that the extendible nodes in $T_{s+1}$ are exactly the downward closure of $\{\eta_\rho:\rho\in T^{\ast,{\rm leaf}}_{s+1}\}$, and every element of $T_{s+1}^\ast$ is extendible in $T_{s+1}$, that is,
\begin{itemize}
\item If ${\tt state}_{s+1}(u)=1$, then $\rho^{s+1}_u\fr i$ is extendible in $T_{s+1}$ for each $i<2$.
\item If ${\tt state}_{s+1}(u)=2$, then $\rho^{s+1}_u$ is extendible in $T_{s+1}$.
\end{itemize}

Our definition of $(T_s)_{s\in\om}$ clearly satisfies the property (II) mentioned above.
Concerning the property (I), one can see the following:

\begin{lemma}\label{lem:tree-correct-instance}
$T_{s+1}$ has at least one, and at most $k+1$ extendible leaves.
\end{lemma}

\begin{proof}
The former assertion trivially holds since $\rho^s_0$ is always defined as an empty string for any $s\in\om^\om$.
For the latter assertion, it suffices to show that any branching extendible node of $T_{s+1}$ is of the form $\rho^{s+1}_u$ for some $u<k$.
This is because $T_s$ is binary, and then the above property automatically ensures that $T_s$ has at most $k+1$ extendible leaves.

Let $\sigma$ be a branching extendible node of $T_{s+1}$.
If $|\sigma|=s$, since $T_s$ is of height $s$, $\sigma$ is of the form $\rho^{s+1}_u$ by our definition of $T_{s+1}$ .
If $|\sigma|<s$, then it is also a branching extendible node of $T_s$ by the property (II) of our construction, and thus it is of the form $\rho^s_u$ by induction.
If $\rho^s_u=\rho^{s+1}_u$ for any $u$, then our Lemma clearly holds.
If $\rho^s_u\not=\rho^{s+1}_u$, then it can happen at (2a) or (3a), and thus, there is $v\leq u$ such that the $v$-th strategy has acted at stage $s+1$.
We claim that for any $\rho\in T_{s+1}^\ast$ we have $\rho^s_u\not\prec\rho$.
This claim implies that $\rho^s_u$ is not a branching extendible node in $T_{s+1}$, which is a contradiction, and therefore we must have $\rho^s_u=\rho^{s+1}_u$.

To show the claim, note that $\rho^{s+1}_w$ is undefined for $w>v$.
If $w<u$ and $\rho^s_w$ is defined then $\rho^s_u\not\preceq\rho^s_w$ by $\mathcal{S}_u$'s action at (2a).
If $w<v$ then $\rho^{s+1}_w=\rho^{s}_w$.
For $w=v$, if ${\tt state}_{s+1}(v)=1$ then $\mathcal{S}_v$ reaches at (2a) at stage $s+1$ and $\rho^s_v\not\preceq\rho^s_u$ by $\mathcal{S}_v$'s action.
If ${\tt state}_{s+1}(v)=2$ then $\mathcal{S}_v$ reaches at (3a) at stage $s+1$, and thus $\rho^{s+1}_v$ is a successor of $\rho^{s+1}_u$ and thus $\rho^s_u\not\prec\rho^{s+1}_v$.
Hence, there is no $\rho\in T_{s+1}^\ast$ such that $\rho^s_u\prec\rho$ as desired.
\end{proof}

\begin{lemma}\label{lem:sort-diagonalize}
If ${\tt state}_{s+1}(u)=2$, then $\Phi(\sort{\alpha\upto s+1}{u}\fr u^\om)$ is not extendible in $T_{s+1}$.
\end{lemma}

\begin{proof}
If ${\tt state}_{s+1}(u)=2$, then there is stage $t\leq s+1$ such that $\sort{\alpha\upto t}{u}=\sort{\alpha\upto s+1}{u}$ and the $u$-th strategy $\mathcal{S}_u$ arrives at (2a) at stage $s$ and (3a) at $s+1$, and the $u$-th strategy is not injured by any higher priority strategy during stages between $t$ and $s+1$, and in particular, $\rho^t_u=\rho^{s}_u$.
By our action (3a), $\Phi(\sort{\alpha\upto s+1}{u}\fr u^\om)$ extends the sister of $\rho^{s+1}_u$.
If $v>u$ then $\rho^{s+1}_v$ is undefined.
If $v<u$ and $\rho^{t}_v$ is undefined, then since no injury happens below $u$ during stages between $t$ and $s+1$, we have $\rho^{s}_u=\rho^{t}_u\not\preceq\rho^{t}_v=\rho^{s+1}_v$, which implies that $\rho^{s+1}_v$ does not extend the sister of $\rho^{s+1}_u$.
Hence the sister of $\rho^{s+1}_u$ does not extend to a leaf of $T^\ast_{s+1}$.
Therefore, $\Phi(\sort{\alpha\upto s+1}{u}\fr u^\om)$ is not extendible in $T_{s+1}$.
\end{proof}

We now verify our construction.
Put $T=\bigcup_{k}T_k$.
By Lemma \ref{lem:tree-correct-instance}, since our construction of $(T_s)_{s\in\om}$ satisfies the conditions (I) and (II), the set $[T]$ of all infinite paths through $T$ is an instance of ${\sf C}_{\#\leq k+1}$.
Let $\alpha$ be an instance of ${\sf Sort}_{k}$.

\begin{lemma}
$\Phi({\sf Sort}_{k}(\alpha))\not\in [T]$.
\end{lemma}

\begin{proof}
By pigeonhole principle, there exists $u$ such that $\alpha(i)=u$ for infinitely many $i$.
Let $u$ be the least such number.
Then there exists $s$ such that $\sort{\alpha}{}:=\sort{\alpha\upto s}{u}\fr u^\om$ is the right answer to the instance $\alpha$ of ${\sf Sort}_k$, that is, it is the result by sorting $\alpha$.
Then, for any $v\leq u$, the $v$-partial sort of $\alpha$ stabilizes after $s$, that is, $\sort{\alpha\upto t+1}{v}=\sort{\alpha\upto t}{v}$ for all $t\geq s$.
After the $v$-partial sort of $\alpha$ stabilizes, the $v$-th strategy $\mathcal{S}_v$ can injure lower priority strategies at most two times, i.e., at (2a) and (3a).
Therefore, there is stage $s_0\geq s$ such that the $u$-th strategy $\mathcal{S}_u$ is never injured by higher priority strategies after $s_0$.
Then, ${\tt state}_t(u)$ converges to some value.

\medskip
\noindent
{\bf Case 1.} $\lim_t{\tt state}_t(u)=0$.
By our choice of $s_0$, $\mathcal{S}_u$ always goes to (2b), and never goes to (2a) after $s_0$.
However, if $\Phi(\sort{\alpha}{})$ is an infinite string, then the strategy must go to (2a) since $\{\rho_v^s:v<u\}$ is finite.
Hence, $\Phi(\sort{\alpha}{})$ cannot be an infinite path through $T$.

\medskip
\noindent
{\bf Case 2.} $\lim_t{\tt state}_t(u)=1$.
Let $s_1\geq s_0$ be the least stage such that $\mathcal{S}_u$ reaches (2a) with some $\rho$.
We claim that if an extendible node in $T_t$ extends $\rho$, then it also extends $\rho^{t}_u$ for any $t>s_1$.
According to the condition of $\mathcal{S}_u$'s strategy (2), for any $v<u$, we have $\rho\not\preceq\rho^{s_1}_v=\rho^{s_0}_v$.
By injury in (2a), $\rho^{s_1}_v$ is undefined for any $v>u$.
Therefore, any extendible node of $T_{s_1+1}$ extends $\rho^t_v$ or $\rho^t_v\fr i$ for some $v\leq u$ and $i<2$.
Hence, if an extendible node in $T_{s_1+1}$ extends $\rho$, then it also extends $\rho^{s_1+1}_u=\rho^t_u$.
By the property (II) of our construction, the claim follows.
Now, by our assumption, $\mathcal{S}_u$ always goes to (3b), and never goes to (3a).
This means that $\Phi(\sort{\alpha\upto t}{u}\fr u^\om)$ extends $\rho$, but does not extend $\rho^{t}_u$ for any $t>s_1$.
Therefore, $\Phi(\sort{\alpha\upto t}{u}\fr u^\om)$ is not extendible in $T_t$ for any $t>s_1$.
Consequently, $\Phi(\sort{\alpha}{})\not\in[T]$.

\medskip
\noindent
{\bf Case 3.} $\lim_t{\tt state}_t(u)=2$.
Let $s_2\geq s_0$ be the least stage such that $\mathcal{S}_u$ reaches (3a).
Then by Lemma \ref{lem:sort-diagonalize}, $\Phi(\sort{\alpha\upto s_2}{u}\fr u^\om)$ is not extendible in $T_{s_2}$.
Since $\mathcal{S}_u$ is not injured after $s_0$, we conclude $\Phi(\sort{\alpha}{})\not\in[T]$.
\end{proof}

By Theorem \ref{theo:recursionweihrauch}, this implies the desired assertion.
\end{proof}

\section{The comparison game for products of finite choice}
\label{sec:game}
In this section we consider the question when finite choice for some cardinality is reducible to some finite product of finite choice operators. We do not obtain an explicit characterization, but rather an indirect one. We introduce a special reachability game (played on a finite graph), and show that the winner of this game tells us whether the reduction holds. This in particular gives us a decision procedure (which so far has not been implemented yet, though).

Our game is parameterized by numbers $k$, and $n_0,n_1,\ldots,n_\ell$. We call the elements of $\bigcup_{i \leq \ell} \{i\} \times n_i$ \emph{colours}, and the elements of $\Pi_{i \leq \ell} n_i$ \emph{tokens}. A token $w$ has colour $(i,c)$, if $w_i = c$.

The current board consists of up to $k$ boxes each of which contains some set of tokens, with no token appearing in distinct boxes. If there ever is an empty box, then Player 1 wins. If the game continues indefinitely without a box becoming empty, Player 2 wins. The initial configuration is chosen by Player 1 selecting the number of boxes, and by Player 2 distributing all tokens into these boxes.

The available actions are as follows:
\begin{description}
\item[Remove] Player 1 taps a box $b$. Player 2 selects some colours $C$ such that every token in $b$ has a colour from $C$. Then the box $b$ and all tokens with a colour from $C$ are removed.
\item[Reintroduce colour] Player 2 picks two `adjacent' colours $(i,c)$ and $(i,d)$, such that no token on the board has colour $(i,d)$. For every box $b$, and every token $w \in b$ having colour $c$, he then adds a token $w'$ to $b$ that is identical to $w$ except for having colour $(i,d)$ rather than $(i,c)$.
\item[Split box] If there are less than $k$ boxes on the board, Player 1 can select a box $b$ to be split into two boxes $b_0$ and $b_1$. Player 2 can chose how to distribute the tokens from $b$ between $b_0$ and $b_1$. Moreover, Player 2 can do any number of \emph{Reintroduce colour} moves before the \emph{Split box}-move takes effect.
\end{description}

\begin{theorem}
\label{theo:game}
$\C_{\sharp \leq k} \leqW \C_{\sharp \leq n_0} \times \ldots \C_{\sharp \leq n_\ell}$ iff Player 2 wins the comparison game for parameters $k$, $n_0, \ldots, n_\ell$.
\end{theorem}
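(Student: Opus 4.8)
The plan is to prove the two directions separately, both times by building a correspondence between plays of the comparison game and the dynamics of an attempted reduction $\C_{\sharp\leq k}\leqW\C_{\sharp\leq n_0}\times\dots\times\C_{\sharp\leq n_\ell}$. The key conceptual dictionary is: a configuration of the game (a distribution of tokens into at most $k$ boxes) encodes a finite stage of a $\C_{\sharp\leq k}$-instance $A\in\mathcal{A}(\Cantor)$ together with the current state of knowledge about the $\ell+1$ component instances $B_0,\dots,B_\ell$ produced by the inner reduction $H$. Each box is a still-possible cluster of infinite paths through the tree for $A$; a token $w\in\Pi_{i\leq\ell}n_i$ in a box records, for each component, which of the (finitely many, by the $\C_{\sharp\leq n_i}$-promise) surviving ``branches'' of $B_i$ that cluster is currently pinned to; the colour $(i,c)$ of $w$ is its commitment in the $i$-th component. \emph{Remove} models Player~1 (the diagonalizer) learning that the outer reduction $K$, fed the component-solutions consistent with colour set $C$, has committed to a point outside one box $b$, so that box can be emptied and the corresponding component-branches killed. \emph{Reintroduce colour} models the component instance $B_i$ revealing a hitherto-unseen branch $d$ adjacent to an existing branch $c$ (possible only while no path currently uses $d$, matching ``no token has colour $(i,d)$''). \emph{Split box} models the $A$-tree branching, with Player~2 choosing how the clusters of paths distribute — and the freedom to do \emph{Reintroduce colour} moves first reflects that $H$ may first refine the $B_i$ before the split is observed.

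For the direction ``Player~2 wins $\Rightarrow$ reduction holds'', I would fix a winning strategy $\tau$ for Player~2 and use it to \emph{define} computable $H$ and $K$. Running against the honest ``Player~1'' whose moves are dictated by the actual incoming approximations to $A$ (a tree being enumerated) and the demanded properties of $K$, the strategy $\tau$ tells us: how to distribute the initial set of paths/tokens (this is how $H$ starts emitting $B_0,\dots,B_\ell$, with the colour structure of the never-emptied boxes prescribing which branches of each $B_i$ stay alive), how to react to each branching of the $A$-tree, and — crucially — when Player~1 taps a box, which colour set $C$ covers it, which is exactly the information $K$ needs to output a point of $A$ from any tuple of solutions to the $B_i$. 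Because $\tau$ never lets a box become empty, the finitely many boxes surviving in the limit are nonempty, pinning down at most $k$ infinite paths — so $[T]$ genuinely is a $\C_{\sharp\leq k}$-instance and the point $K$ produces lies in it. One must check that the $B_i$ so produced have at most $n_i$ branches surviving in the limit; this follows because a new branch is born only via \emph{Reintroduce colour}, which Player~2's legal-move constraints (adjacency, and that the old colour is still present) together with the bookkeeping of the colour graph keep bounded by $n_i$.

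For the converse, ``reduction holds $\Rightarrow$ Player~2 wins'', I would argue the contrapositive using Theorem~\ref{theo:recursionweihrauch}: if Player~1 has a winning strategy in the comparison game, then there is a computable procedure $e$ that, given a candidate inner reduction $H$ and outer reduction $K$ (the latter encoded as $k\in\mathcal{M}(\mathbf{V},\mathbf{Y})$), plays Player~1's winning strategy against the ``environment'' that $H$ and $K$ constitute, thereby constructing a $\C_{\sharp\leq k}$-instance $C$ on which $K\langle\cdot,G H(\cdot)\rangle$ fails for a suitable $G\vdash g$ — because some box gets emptied, i.e.\ all clusters of $A$-paths that $K$'s output could have landed in get removed, so $K$'s output point lies outside $[T]$. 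Feeding this $e$ into Theorem~\ref{theo:recursionweihrauch} yields $\C_{\sharp\leq k}\nleqW\C_{\sharp\leq n_0}\times\dots\times\C_{\sharp\leq n_\ell}$.

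The main obstacle I anticipate is getting the dictionary \emph{exactly} right — in particular verifying that the legality conditions on the three game moves are neither too permissive (which would let Player~2 ``cheat'' in a way the real reduction cannot exploit, breaking the forward direction) nor too restrictive (which would let Player~1 win games that correspond to no real failure, breaking the converse). The delicate points are the colour-adjacency condition in \emph{Reintroduce colour} (it must match precisely how a $\C_{\sharp\leq n_i}$-instance can spawn a new path: by a branching event near an existing path, not arbitrarily) and the ordering freedom in \emph{Split box} (Player~2 reintroducing colours \emph{before} the split is what lets a single $A$-branching be ``felt'' differently in different components — without it the game would be too weak for Player~2). I expect the bulk of the real work to be a careful invariant, maintained along the play/stage correspondence, asserting that the colour multiset in each box is a faithful snapshot of the live branch-structure of the $B_i$ restricted to that cluster.
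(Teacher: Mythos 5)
Your proposal is correct and follows essentially the same route as the paper: the paper also splits the theorem into two lemmas, extracting the reduction witnesses $H,K$ from a winning strategy of Player~2 (boxes = vertices alive at the current level of the $\C_{\sharp\leq k}$-tree, colours = surviving branches of the component trees, tokens = the outer reduction's commitments), and, for the converse, running Player~1's winning strategy against the pair $(H,K)$ viewed as a Player~2 strategy to build a diagonalizing instance via Theorem~\ref{theo:recursionweihrauch}. Your dictionary for the three move types matches the paper's intended semantics, including the role of the \emph{Reintroduce colour} moves preceding a split.
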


The proof proceeds via Lemmas \ref{lemma:gameplayer2}, \ref{lemma:gameplayer1} below. We observe that the game is a reachability game played on a finite graph. In particular, it is decidable who wins the game for a given choice of parameters. An implementation of the decision procedure is in progress. We have only considered the case $n_i = 2$ so far, and know:

\begin{proposition}
\hfill
\begin{enumerate}
\item Player 2 wins for $k + 1 \leq \ell$.
\item Player 1 wins for $k + 1 \geq 2^{\ell-1}$
\end{enumerate}
\begin{proof}
The first claim follows from Theorem \ref{theo:game} in conjunction with \cite[Proposition 3.9]{paulyleroux} stating that $\ic{\leq n+1} \leqW \ic{\leq 2}^n$. The second is immediate when analyzing the game.
\end{proof}
\end{proposition}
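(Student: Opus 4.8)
The plan is to prove the two parts separately, each time passing through Theorem~\ref{theo:game}. Throughout, specialize the comparison game to $n_0=\dots=n_\ell=2$: then there are $2(\ell+1)$ colours $(i,c)$ with $i\leq\ell$ and $c\in\{0,1\}$, the tokens are the $2^{\ell+1}$ sequences $w\in\{0,1\}^{\ell+1}$, a token $w$ carries colour $(i,c)$ exactly when $w_i=c$, and the unique adjacent pair of colours within coordinate $i$ is $(i,0),(i,1)$. By Theorem~\ref{theo:game}, Player~2 wins this game if and only if $\C_{\sharp\leq k}\leqW\C_{\sharp\leq 2}^{\ell+1}$.

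For the first part I would invoke \cite[Proposition 3.9]{paulyleroux}, which states $\C_{\sharp\leq n+1}\leqW\C_{\sharp\leq 2}^{n}$ for every $n$. Instantiating with $n=k-1$ gives $\C_{\sharp\leq k}\leqW\C_{\sharp\leq 2}^{k-1}$, and as soon as $k+1\leq\ell$ we have $k-1\leq\ell+1$, so $\C_{\sharp\leq 2}^{k-1}\leqW\C_{\sharp\leq 2}^{\ell+1}$ by using only $k-1$ of the $\ell+1$ available factors. Hence $\C_{\sharp\leq k}\leqW\C_{\sharp\leq 2}^{\ell+1}$, and by Theorem~\ref{theo:game} Player~2 wins.

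For the second part I would exhibit an explicit winning strategy for Player~1 and apply Theorem~\ref{theo:game} in the converse direction. Player~1 opens by demanding exactly $k$ boxes, which forces Player~2 to spread all $2^{\ell+1}$ tokens over $k$ nonempty boxes. The key accounting is this: when Player~1 taps a box $b$, Player~2 must respond with a colour set $C$ covering $b$; the tokens that survive the tap are precisely those avoiding every colour of $C$, and if $C$ uses at most one colour per coordinate these form a subcube obtained by fixing $|C|$ coordinates, hence containing $2^{\ell+1-|C|}$ tokens — whereas if $C$ ever contains both colours of a single coordinate, then every token, and so every remaining box, is wiped out and Player~1 wins. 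Since the boxes are pairwise disjoint, at most $2^{\ell+1-|C|}$ of them can survive the tap. Player~1's strategy is to interleave \emph{Split box} moves — used to restore the count to $k$ boxes whenever fewer remain and enough tokens are still on the board — with \emph{Remove} moves chosen so that the ambient subcube containing all surviving tokens strictly shrinks each round. Unwinding this recursion on the ambient dimension, once $k+1\geq 2^{\ell-1}$ the surviving subcube eventually becomes too small to intersect all the boxes that Player~1 has restored, so some box is left empty and Player~1 wins; by Theorem~\ref{theo:game}, the reduction fails.

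The step I expect to be the main obstacle is checking that Player~2's two defensive options do not spoil this count: the freedom to decide how a split box is redistributed, and the \emph{Reintroduce colour} move permitted immediately before a \emph{Split box}. For the redistribution the point is that Player~1 never needs to control the split itself, only which box is split and which box is later tapped, so the dimension-shrinking invariant can be maintained regardless of Player~2's choice. For \emph{Reintroduce colour} the point is that any token $w'$ it adds agrees with an already-present token $w$ in every coordinate except the one coordinate $i$ in which $(i,\cdot)$ had been reduced to a single value, so $w'$ lies in the relevant surviving subcube exactly when $w$ does; this leaves Player~1's bound on how many boxes can meet that subcube unchanged, and Player~2 cannot reinstate the second colour of a coordinate just consumed by one of its own covers. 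Once these two checks are in place, assembling the moves above into a concrete Player~1 strategy and reading the threshold $2^{\ell-1}$ off the dimension count completes the proof.
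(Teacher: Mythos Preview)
Your Part 1 is exactly the paper's argument: invoke $\C_{\sharp\leq n+1}\leqW\C_{\sharp\leq 2}^n$ from \cite{paulyleroux} and feed it through Theorem~\ref{theo:game}.

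For Part 2 the paper gives no detail beyond ``immediate when analyzing the game,'' so the intended argument is a short pigeonhole on the game, not an inductive strategy of the kind you sketch. More to the point, your sketch has a real gap. Your central invariant---that an ``ambient subcube containing all surviving tokens'' strictly shrinks each round---is not preserved by \emph{Reintroduce colour}. You assert that a reintroduced token $w'$ lies in the surviving subcube exactly when its parent $w$ does; but if coordinate $i$ is one of the pinned coordinates of that subcube (pinned to value $c$, say) and Player~2 reintroduces $(i,1-c)$, then $w$ (with $w_i=c$) is in the subcube while $w'$ (with $w'_i=1-c$) is not. The entire purpose of \emph{Reintroduce colour} is to re-inflate the token set along a collapsed coordinate. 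So every time your Player~1 plays \emph{Split box} to ``restore the count to $k$,'' Player~2 may first reintroduce every missing colour and return the token set to the full cube $\{0,1\}^{\ell+1}$; your recursion makes no net progress.

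If instead Player~1 never splits, then yes, each \emph{Remove} forces Player~2 to spend at least one fresh coordinate (or else wipe the board), so the dimension drops---but Player~1 also loses a box per tap, and after $k$ taps has none left without ever producing an empty box. You have not shown how to trade these two effects off against each other, nor how the specific threshold $2^{\ell-1}$ emerges from that count; the phrase ``reading the threshold $2^{\ell-1}$ off the dimension count'' is doing all the work and none of it is justified. In short, you have not actually exhibited a winning strategy for Player~1.
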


\begin{lemma}
\label{lemma:gameplayer2}
From a winning strategy of Player 2 in the comparison game we can extract witnesses for the reduction $\C_{\sharp \leq k} \leqW \C_{\sharp \leq n_0} \times \ldots \C_{\sharp \leq n_\ell}$.
\begin{proof}
We recall that the input to $\C_{\sharp \leq k}$ can be seen as an infinite binary tree having at most $k$ vertices on each level. We view this tree as specifying a strategy for Player 1 in the comparison game: The boxes correspond to the paths existing up to the current level of the tree. If a path dies out, Player 1 taps the corresponding box. If a path splits into two, Player 1 splits the corresponding box.

Which tokens exist at a certain time tells us how the instances to $\C_{\sharp \leq n_0}, \ldots , \C_{\sharp \leq n_\ell}$ are built. The colour $(i,j)$ refers to the $j$-path through the $i$-th tree at the current approximation. If a colour gets removed, this means that the corresponding path dies out. If a colour gets reintroduced, we split the path corresponding to the duplicated colour into two.

It remains to see how the outer reduction witness maps infinite paths through these trees back to an infinite path through the input tree. If we are currently looking at some finite approximation of the input tree and the query trees, together with an infinite path through each query tree, then the infinite paths indicates some token which never will be removed. That means that any box containing that token never gets tapped, i.e.~that certain prefixes indeed can be continued to an infinite path.
\end{proof}
\end{lemma}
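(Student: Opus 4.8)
The plan is to convert a winning strategy $\tau$ for Player~2 into computable maps $H$ and $K$ witnessing $\C_{\sharp \leq k} \leqW \C_{\sharp \leq n_0} \times \ldots \times \C_{\sharp \leq n_\ell}$, following the dictionary sketched above: boxes on the board correspond to the currently extendible leaves of the finite approximation of the input tree $T$; a colour $(i,j)$ corresponds to the (at each stage unique) live leaf carrying label $j$ in the $i$-th query tree $T^i$; and a token $(c_0,\ldots,c_\ell)$ corresponds to a choice of one live leaf in each $T^i$. Uniqueness of the live leaf per colour is preserved by the construction, since each branching splits one labelled leaf into two differently labelled ones and pruned leaves carry no colour.

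For the inner reduction $H$: read $T$ level by level and run the game with Player~1 playing the strategy induced by $T$ — tap the box of a prefix that dies, split the box of a prefix that branches — while Player~2 plays $\tau$. Translate Player~2's answers into a construction of $T^0,\ldots,T^\ell$: when $\tau$ answers a \emph{Remove} with a colour cover $C$, prune the leaves named by the colours in $C$; when $\tau$ does a \emph{Reintroduce colour} move $(i,c)\mapsto(i,d)$ before a split, branch the leaf of $T^i$ currently labelled $c$ and give its two children labels $c$ and $d$; otherwise extend every live leaf by one symbol. Since only the labels $0,\ldots,n_i-1$ ever occur in coordinate $i$, each level of $T^i$ has at most $n_i$ live leaves, so $|[T^i]|\le n_i$; and since $\tau$ is winning, no box is ever emptied, so at every stage some token — hence some colour of coordinate $i$ — survives, giving $[T^i]\ne\emptyset$. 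Thus each $T^i$ is a legitimate instance of $\C_{\sharp\le n_i}$.

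For the outer reduction $K$: on input $T$ together with solutions $q_i\in[T^i]$, re-run the game construction and, in parallel, track the \emph{current token} $w^{(s)}=(j_0^{(s)},\ldots,j_\ell^{(s)})$, where $j_i^{(s)}$ is the label of $q_i\upto s$ in the stage-$s$ approximation of $T^i$. The heart of the argument is the invariant that $w^{(s)}$ always lies on the board, in some box $b^{(s)}$, and that $b^{(s+1)}$ is either $b^{(s)}$ itself or one of the two boxes into which $b^{(s)}$ is split. Granting the invariant, the prefixes $\sigma^{(0)}\preceq\sigma^{(1)}\preceq\cdots$ of $T$ represented by $b^{(0)},b^{(1)},\ldots$ (with $|\sigma^{(s)}|=s$) have a limit $p$; every $\sigma^{(s)}$ lies in $T$, so $p\in[T]$, and $K$ emits $p$ one symbol per stage with no backtracking. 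Since the $q_i$ range over all solutions to the query instances, this gives the reduction for every realizer of the right-hand side.

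The main obstacle is establishing the invariant, and this is precisely where it matters that each $q_i$ is an \emph{infinite} path. A \emph{Reintroduce colour} move only adds tokens, never deletes any, and when it relabels $q_i$'s current leaf from $c$ to $d$, the token it inserts into $b^{(s)}$ is exactly the relabelled $w^{(s)}$; a \emph{Split box} move simply moves $w^{(s)}$ into a child box. The only delicate case is a \emph{Remove} tapping some box $b$ with colour cover $C$: as $q_i\upto s$ lies on an infinite path it is never pruned, so the colour $(i,j_i^{(s)})$ it carries cannot be in $C$ (deleting that colour would delete its unique live leaf $q_i\upto s$); hence $w^{(s)}$ has no colour in $C$, so it could only be removed by being \emph{in} $b$ — but then $C$, being a cover of $b$, would have to contain one of $w^{(s)}$'s colours, a contradiction. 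So $b\ne b^{(s)}$, $w^{(s)}$ survives, and in particular the box holding the current token is never tapped, which is exactly what keeps the limit path $p$ inside $T$. Some routine bookkeeping is needed when several reintroduce moves and a split fall "between" consecutive stages, but this only fixes the order in which the above micro-steps are checked.
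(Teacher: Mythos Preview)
Your proof is correct and follows exactly the approach of the paper: the same dictionary between boxes/colours/tokens and tree prefixes/labelled leaves/leaf tuples, the same translation of Player~1's strategy from the input tree and of Player~2's responses into the query trees, and the same outer-reduction idea that the tuple of solution paths names a token that is never removed. You have supplied considerably more detail than the paper's three-paragraph sketch --- in particular, the explicit case analysis showing that the current token survives a \emph{Remove} (because none of its colours can lie in the cover $C$, as the $q_i$ are infinite), and the verification that each $T^i$ is a legitimate $\C_{\sharp\le n_i}$-instance --- but the underlying argument is the same.
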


\begin{lemma}
\label{lemma:gameplayer1}
From a winning strategy of Player 1 in the comparison game we can extract a witness for the non-reduction $\C_{\sharp \leq k} \nleqW \C_{\sharp \leq n_0} \times \ldots \C_{\sharp \leq n_\ell}$ according to Theorem \ref{theo:recursionweihrauch}.
\begin{proof}
We need to describe a procedure that constructs an input for $\C_{\sharp \leq k}$ given inputs to $\C_{\sharp \leq n_0}, \ldots , \C_{\sharp \leq n_\ell}$ and an outer reduction witness. Inverting the procedure from Lemma \ref{lemma:gameplayer2}, we can view the given objects as describing a strategy of Player 2 in the game. We obtain the input tree to $\C_{\sharp \leq k}$ by observing how the winning strategy of Player 1 acts against this. When Player 1 taps the $i$-th box, we let the $i$-th path through the tree die out. When Player 1 splits the $i$-th box, we let both children of the $i$-th vertex present at the current layer be present at the subsequent layer. Otherwise, we keep the left-most child of any vertex on the previous layer.

Since Player 1 is winning, we will eventually reach an empty box. At that point, we let all other paths die out, and only keep the one corresponding to the empty box. This means that any path selected by the outer reduction witness we obtained Player 2's strategy from will fall outside the tree, and thus satisfy the criterion of Theorem \ref{theo:recursionweihrauch}.
\end{proof}
\end{lemma}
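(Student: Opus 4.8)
The goal is to apply Theorem~\ref{theo:recursionweihrauch} with $f = \C_{\sharp\leq k}$ and $g = \C_{\sharp\leq n_0}\times\cdots\times\C_{\sharp\leq n_\ell}$: I must exhibit a computable map $e$ which, given an instance of $g$ together with an outer reduction witness $K$, produces an instance of $\C_{\sharp\leq k}$ — a binary tree $T$ with at most $k$ vertices on each level, as in the proof of Lemma~\ref{lemma:gameplayer2} — such that for every solution $(p_0,\dots,p_\ell)$ of the $g$-instance the value $K\langle p_0,\dots,p_\ell\rangle$ is not an infinite path through $T$. By Theorem~\ref{theo:recursionweihrauch} this gives $\C_{\sharp\leq k}\nleqW g$.

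The plan is to run the given winning strategy $\sigma_1$ of Player~1 against a Player~2 strategy $\sigma_2$ read off from the $g$-instance and from $K$, by inverting the dictionary implicit in the proof of Lemma~\ref{lemma:gameplayer2}. Under this dictionary, colour $(i,j)$ is the $j$-th surviving path of the $i$-th query tree, removal of a colour is that path dying, reintroduction of a colour is that path splitting, and a token is a tuple selecting one path per query tree — i.e.\ a potential joint solution of $g$; the assignment of a token $w$ to a box $b$ records that $K$, on the approximation of $\langle p_0,\dots,p_\ell\rangle$ following $w$, has so far committed only to outputs compatible with the node $b$ of the tree $T$ under construction. As the query trees and $K$ reveal more information, $\sigma_2$ is forced to make the corresponding moves; $\sigma_1$ answers, and its answers build $T$: a \emph{Remove} of box $b$ makes node $b$ a leaf of $T$, a \emph{Split box} move on $b$ gives node $b$ two children, and at every other step each current leaf gets exactly one (leftmost) child. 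This keeps $T$ binary with at most $k$ vertices per level throughout.

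Since $\sigma_1$ is winning, some box $b^\ast$ becomes empty at a finite stage. From then on $e$ kills every node of $T$ off the branch through $b^\ast$ and extends $b^\ast$ to an infinite path $q^\ast$, so $[T] = \{q^\ast\}$. For the verification, let $(p_0,\dots,p_\ell)$ be any solution of the $g$-instance. Each $p_i$ is a path that survives forever in the $i$-th query tree, so together the $p_i$ determine a token $w$ that is never removed, and hence $w$ traces an infinite branch through the box-tree $T$. But at the stage $b^\ast$ emptied we had $w\notin b^\ast$, and an empty box never regains a token (splits and reintroductions only add tokens to boxes that already contain a suitable one). So the branch traced by $w$ diverges from $b^\ast$ at or before that stage, which by the dictionary means $K\langle p_0,\dots,p_\ell\rangle$ has already committed to an output incompatible with $q^\ast$; thus $K\langle p_0,\dots,p_\ell\rangle\notin[T]$, as needed.

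The main obstacle is to make the two-way translation between the combinatorial game position and the analytic data fully precise — in particular, to define rigorously when a token \emph{sits in} a box in terms of the monotone commitments of a continuous realizer of $K$ (allowing a token to sit in a box that $K$ has not yet resolved below), to check that the synthesised $\sigma_2$ is a legal Player~2 strategy (each token in exactly one box at all times, \emph{Reintroduce colour} applied only to a colour that is currently absent, the tokens of a tapped box always covered by the colours offered in response), and to verify that $e$ is computable and uniform. Once this dictionary — the inverse of the one used in Lemma~\ref{lemma:gameplayer2} — is in place, the remainder is bookkeeping, and correctness rests on the single fact that an empty box is never repopulated.
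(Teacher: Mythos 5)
Your proposal follows the paper's proof essentially step for step: invert the dictionary of Lemma \ref{lemma:gameplayer2} to read a Player~2 strategy off the $g$-instance and the outer reduction witness, build the tree $T$ from Player~1's winning responses (tap $=$ path dies, split $=$ two children, otherwise leftmost child), and once a box empties, prune $T$ to the single branch through that box so that the outer witness's output falls outside $[T]$, as required by Theorem \ref{theo:recursionweihrauch}. The bookkeeping issues you flag (precise token-in-box semantics, legality of the synthesised Player~2 strategy, uniform computability) are likewise left implicit in the paper, so there is no substantive difference.
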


\section{Rectangles versus Triangles}
\label{sec:triangles}
In this section, we shall show that the product of one-dimensional convex choice with itself is strictly weaker than two-dimensional convex choice. We achieve this by comparing the strength of certain restrictions of two-dimensional convex choice. Let $\mathfrak{T}$ be the class of closed triangles in $[0,1]^2$. We consider the degenerate cases of lines and single points to be included. The main result of this section is:

\begin{theorem}
$\XC_2|_\mathfrak{T} \nleqW \XC_1 \times \XC_1$
\end{theorem}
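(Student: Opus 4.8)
The plan is to apply Theorem \ref{theo:recursionweihrauch} with $g = \XC_1 \times \XC_1$ and $f = \XC_2|_{\mathfrak{T}}$. So I need to describe a computable procedure which, given a pair of convex trees $(S_0, S_1)$ in $2^{<\omega}$ (coding intervals $I_0, I_1 \subseteq \uint$, after the standard identification of $\XC_1$-instances with intervals) together with an outer reduction witness $k \in \mathcal{M}$, constructs a closed triangle $\Delta \subseteq \uint^2$ such that for \emph{some} pair of points $(x_0, x_1) \in I_0 \times I_1$, we have $k(x_0, x_1) \notin \Delta$. Since the space of closed convex (in particular triangular) subsets of $\uint^2$ is precomplete by the Proposition following Theorem \ref{theo:recursionweihrauch}, and $\XC_1$-instances can be built incrementally, this is the right setup.

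The geometric heart of the argument is the contrast between what $\XC_1 \times \XC_1$ can ``see'' and what a triangle encodes. A pair of intervals $I_0 \times I_1$ is an axis-aligned rectangle. Each of the two $\XC_1$-realizers, reading a shrinking sequence of nested intervals, is forced to commit to one side when the interval splits, but can only delay as long as the interval has positive length; and crucially, the two coordinates are chosen \emph{independently}. My strategy would be to maintain a candidate triangle $\Delta$ that is a thin sliver along a diagonal direction, so that ``most'' of the ambient rectangle $I_0 \times I_1$ lies outside $\Delta$, and to use the freedom to shrink $I_0$ and $I_1$ (as allowed for closed-set names, which may only shrink) to herd the outer reduction's output point away from $\Delta$. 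Concretely: watch where $k$ wants to place its output as a function of longer and longer prefixes of the two interval-names; once $k$ has committed (on some finite prefix) to outputting a point in a particular region $R$ of the plane, I shrink $I_0$ and $I_1$ so that $R$ still meets $I_0 \times I_1$ but I can now choose a triangle $\Delta$ inside the (still nonempty) rectangle that avoids $R$. The key combinatorial fact to exploit, analogous to the $2^s - 1$ counting in the proof of Theorem \ref{theo:simpleproducts}, is that with two one-dimensional choices one effectively pins down a point by independently narrowing two intervals, which cannot track a two-dimensional convex constraint: the ``diagonal'' information a triangle can carry (a linear relation between the two coordinates) is exactly what the product of two interval choices fails to transmit, since each factor only learns about its own coordinate.

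The main steps, in order, would be: (1) set up the recursion-theorem framework as above, fixing notation for the incremental construction of $(I_0, I_1)$ and $\Delta$; (2) run $k$ on increasingly long prefixes of the (so-far-committed) names of $I_0, I_1$ until either $k$ never commits to anything that lands in the current rectangle — in which case essentially any thin triangle inside the rectangle works and we are done — or $k$ commits, on prefixes $\tau_0, \tau_1$ of the interval names, to producing an output in a small ball $B$; (3) argue, using the fact that $\tau_0, \tau_1$ only constrain $I_0, I_1$ to lie in intervals of some positive lengths $\delta_0, \delta_1$, that we may further shrink $I_0 \subseteq [\tau_0]$, $I_1 \subseteq [\tau_1]$ and pick a triangle $\Delta \subseteq I_0 \times I_1$ with $B \cap \Delta = \emptyset$ yet $B \cap (I_0 \times I_1) \neq \emptyset$ — this is where the geometry of triangles versus rectangles is used, placing $\Delta$ as a sliver hugging a corner or edge of the shrunken rectangle disjoint from $B$; (4) finish the names of $I_0, I_1, \Delta$ consistently with these choices and invoke Theorem \ref{theo:recursionweihrauch}.

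The hard part will be step (3), and more precisely making the two-dimensional obstruction robust: a single commitment of $k$ to a ball $B$ is easy to dodge, but $k$ may refuse to commit to \emph{any} point, instead committing only to, say, a line segment or a larger convex region while still being a legitimate multivalued output — and I must ensure that the ``region $k$ is committed to'' (formally, the image under $k$ of the current cylinder of names, which shrinks as I extend the names but need not shrink to a point) is eventually forced either to leave the rectangle or to miss some choosable triangle inside it. Handling this requires a careful potential/measure argument: as I shrink $I_0 \times I_1$, the relative geometry must keep improving in my favor, exploiting that $k$'s output-region, being the image of a product of one-dimensional cylinders under a continuous map, cannot ``rotate'' to match an arbitrary triangular constraint faster than I can thin $\Delta$. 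Getting the bookkeeping right so that the construction terminates (or diagonalizes in the limit) is the real obstacle; the rest is routine in light of Theorem \ref{theo:recursionweihrauch}.
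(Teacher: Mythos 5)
Your framework (Theorem \ref{theo:recursionweihrauch} with $f = \XC_2|_\mathfrak{T}$ and $g = \XC_1 \times \XC_1$) matches the paper's, but there is a role confusion that undermines the strategy you build on it. In the recursion-theorem setup the procedure $e$ \emph{receives} the $g$-instance $(I_0,I_1)$ — it is produced by the opponent's inner reduction $H$ from your own triangle, self-referentially — and must \emph{output} the triangle $\Delta$. You repeatedly invoke "the freedom to shrink $I_0$ and $I_1$" to herd $k$'s output away from $\Delta$; you do not have that freedom. You control only $\Delta$, and the opponent answers each shrinking of $\Delta$ by shrinking the rectangle (necessarily by axis-aligned cuts, since each factor is an interval). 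Relatedly, the requirement $\Delta \subseteq I_0 \times I_1$ conflates domain and codomain: the rectangle is the solution set of $g$ and the triangle is the solution set of $f$; they live on opposite sides of the outer map $k$ and need not be comparable.

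The deeper gap is that the actual combinatorial mechanism is absent, and the geometric intuition you propose points the wrong way. The paper's argument (Lemma \ref{lemma:boxesargument}, via "triple activable sites") exploits that a triangle has \emph{three} extreme points: each tip is a site that can be split by its median into two halves, giving three independent binary choices; by uniform continuity one covers $H(A_0)$ by a grid of cells, colours each cell by the tip it is mapped near, observes that the number of coloured cells is a non-increasing finite potential, and a three-case analysis of the cell arrangement shows that every round of activating sites forces an axis-aligned cut that destroys a coloured cell — until some colour vanishes, at which point collapsing the triangle to that tip defeats the reduction. Your "thin sliver along a diagonal" does not use the three tips at all; a sliver is essentially a segment, and segment-like convex sets are exactly the ones for which this kind of separation should \emph{not} be expected, so thinning in that direction loses the obstruction rather than sharpening it. You correctly identify that the real difficulty is forcing termination when $k$ commits only to regions rather than points, but the "careful potential/measure argument" you defer to is precisely the content of the proof; without the three-sites structure and the cell-counting case analysis (or a substitute for them), the construction as sketched does not go through.
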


Note that we can conceive of $\XC_1 \times \XC_1$ as choice for rectangles in $\uint^2$ with the restriction that the rectangles are aligned to the boundaries of the unit square. Glossing over the alignment-restriction, we could say that choice for triangles is not reducible to choice for rectangles. Again, the degenerate cases of lines and points would be included.

\begin{definition}
A class $\mathfrak{A} \subseteq \mathcal{A}(\mathbf{X})$ has \emph{triple activable sites}, if there are computable families $(A_n)_{n \in \mathbb{N}} \in \mathcal{A}(\mathbf{X})$, $(\overline{S}^0_n,\overline{S}^1_n,\overline{S}^2_n)_{n \in \mathbb{N}} \in \mathcal{K}(\mathbf{X})$ and $(S^0_n,S^1_n,S^2_n)_{n \in \mathbb{N}} \in \mathcal{O}(\mathbf{X})$ such that
\begin{enumerate}
\item Whenever $\forall n \ A_{p(n+1)} \subseteq A_{p(n)}$, then $\bigcap_{n \in \mathbb{N}} A_{p(n)} \in \mathfrak{A}$.
\item $\forall n \in \mathbb{N} \ \forall i \in \{0,1,2\} \ \emptyset \neq \overline{S}^i_n \subseteq S^i_n$
\item $\forall n \in \mathbb{N} \ \forall i,j \in \{0,1,2,\} \ i \neq j \Rightarrow S^i_n \cap S^j_n = \emptyset$
\item $\forall n \in \mathbb{N} \ \forall i \in \{0,1,2\} \overline{S}^i_n \subseteq A_n$
\item There is a computable procedure (choosing a site) that given $n \in \mathbb{N}$ and $i \in \{0,1,2\}$ computes $m \in \mathbb{N}$ such that $A_m \subseteq \overline{S}_n^i$.
\item There is a computable procedure (killing a site) that given $n \in \mathbb{N}$ and $i \in \{0,1,2\}$ computes $m \in \mathbb{N}$ such that $A_m \subseteq A_n \setminus S_n^i$.

\item There is a computable procedure (activating one site) that given $n \in \mathbb{N}$, $i \in \{0,1,2\}$ computes $m_{0},m_{1}$ such that
\begin{enumerate}
\item $\forall \ell \in \{0,1,2\} \ \forall k \in \{0,1\} \ A_{m_k} \subseteq A_n \wedge S_{m_k}^\ell \subseteq S_n^\ell \wedge \overline{S}_{m_k}^\ell \subseteq \overline{S}_n^\ell$
\item $\forall a \in \{0,1\} \ S_{m_{a}}^i \cap A_{m_{(1-a)}} = \emptyset$
\end{enumerate}

\item There is a computable procedure (activating two sites) that given $n \in \mathbb{N}$, $i,j \in \{0,1,2\}$, $i \neq j$ computes $m_{00},m_{01},m_{10},m_{11}$ such that
\begin{enumerate}
\item $\forall \ell \in \{0,1,2\} \ \forall k \in \{00,01,10,11\} \ A_{m_k} \subseteq A_n \wedge S_{m_k}^\ell \subseteq S_n^\ell \wedge \overline{S}_{m_k}^\ell \subseteq \overline{S}_n^\ell$
\item $\forall a, b \in \{0,1\} \ S_{m_{ab}}^i \cap A_{m_{(1-a)b}} = \emptyset \wedge  S_{m_{ab}}^j \cap A_{m_{a(1-b)}}=\emptyset$
\end{enumerate}

\item There is a computable procedure (activating two sites) that given $n \in \mathbb{N}$, computes $(m_{w})_{w \in \{0,1\}^3}$ such that
\begin{enumerate}
\item $\forall \ell \in \{0,1,2\} \ \forall k \in \{0,1\}^3 \ A_{m_k} \subseteq A_n \wedge S_{m_k}^\ell \subseteq S_n^\ell \wedge \overline{S}_{m_k}^\ell \subseteq \overline{S}_n^\ell$
\item $\forall a, b, c \in \{0,1\} \ S_{m_{abc}}^0 \cap A_{m_{(1-a)bc}} = \emptyset \wedge  S_{m_{abc}}^1 \cap A_{m_{a(1-b)c}} =\emptyset\wedge  S_{m_{abc}}^2 \cap A_{m_{ab(1-c)}}=\emptyset$
\end{enumerate}
\end{enumerate}
\end{definition}

The idea is that we can construct sets of type $\mathfrak{A}$ by selecting three disjoint regions inside them (the sites), in a way that we always later entirely remove one of those sites without leaving $\mathfrak{A}$. Moreover, we can subdivide two sites simultaneously, and then later on move to a subset belonging to $\mathfrak{A}$ that realizes any combination of the subdivisions.

\begin{lemma}
$\mathfrak{T}$ has triple activable sites.
\begin{proof}
We chose as sites the tips of the triangle. For subdivision, we split the chosen tips by the corresponding median. We can find suitable smaller triangles avoiding half of a each tip but intersecting the other as depicted in Figure \ref{fig:triangles}.

\begin{figure}[h]
\centering
\includegraphics[width=0.9\textwidth]{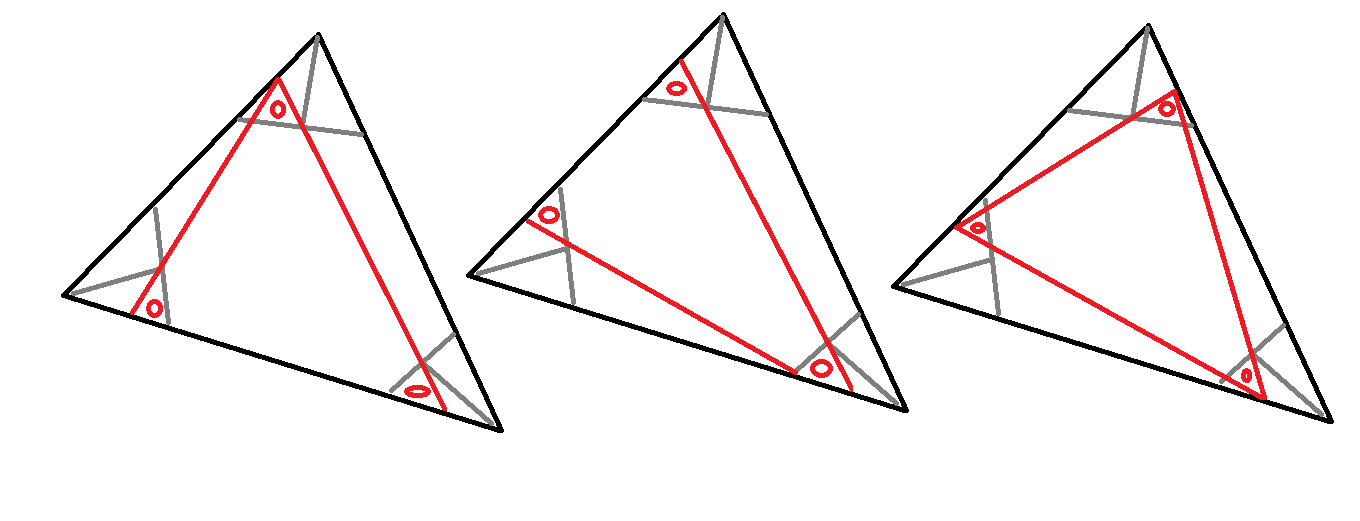}
\caption{Activating sites in a triangle}
\label{fig:triangles}
\end{figure}
\end{proof}
\end{lemma}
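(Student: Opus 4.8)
I would realise every piece of the required structure geometrically, taking as index set a computable enumeration of all \emph{non-degenerate} closed triangles in $\uint^2$ with rational vertices in general position, each equipped with a triple of rational ``corner radii'' cutting out pairwise disjoint neighbourhoods of its three vertices. Precisely: an index $n$ encodes rational $v^n_0,v^n_1,v^n_2\in\uint^2$ in general position together with rational $r^n_0,r^n_1,r^n_2>0$; set $A_n:=\mathrm{conv}\{v^n_0,v^n_1,v^n_2\}$, $S^i_n:=A_n\cap B(v^n_i,r^n_i)$ and $\overline{S}^i_n:=A_n\cap\overline{B}(v^n_i,r^n_i/2)$, and admit only those $n$ for which $B(v^n_0,r^n_0),B(v^n_1,r^n_1),B(v^n_2,r^n_2)$ are pairwise disjoint (a decidable condition on the rational data). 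The three families are then computable in $\mathcal{A}(\uint^2)$, $\mathcal{K}(\uint^2)$ and $\mathcal{O}(\uint^2)$, and conditions (2)--(4) are immediate. For condition (1) one uses that a decreasing sequence of triangles converges to its intersection in the Hausdorff metric, and that a compact convex set with more than three extreme points cannot be contained in every member of such a sequence: each exposed edge or smooth boundary arc of the limit would have to lie along (or asymptotically along) a side of all sufficiently late triangles, and a triangle has only three sides. Hence the intersection is again a possibly degenerate triangle, i.e.\ lies in $\mathfrak{T}$.

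\emph{Choosing} and \emph{killing} a site (conditions (5), (6)) are straightforward. For (5), output the triangle spanned by $v^n_i$ and two rational points a small fraction of the way from $v^n_i$ along the two edges of $A_n$ at $v^n_i$; such a triangle lies in $\overline{B}(v^n_i,r^n_i/2)\cap A_n=\overline{S}^i_n$. For (6), output a small non-degenerate triangle lying in $A_n$ on the far side of a rational chord that separates $v^n_i$ from the other two vertices and misses $\overline{B}(v^n_i,r^n_i)$, so that it avoids $S^i_n$. In both cases equip the output with any sufficiently small rational corner radii.

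The heart of the lemma is the three \emph{activation} procedures (conditions (7)--(9)), and the one thing that must be got right is \emph{where} the new vertices are placed: on the two edges incident to the activated vertex, not in the interior. To activate $v^n_i$, let $p^i_0$ and $p^i_1$ be rational points a small fraction $t_i$ of the way from $v^n_i$ along the two edges of $A_n$ at $v^n_i$; for $t_i$ small they lie in $\overline{S}^i_n$ and sit on opposite sides of the median from $v^n_i$. The key estimate is that the triangle obtained from $A_n$ (or from any of the sub-triangles arising in these procedures) by using $p^i_{1-a}$ in place of $v^n_i$ stays a positive distance, bounded below by an explicit rational function of the data, away from $p^i_a$. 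In model coordinates $v^n_i=(0,0)$, $v^n_j=(1,0)$, $v^n_k=(0,1)$ with $p^i_0=(t_i,0)$ and $p^i_1=(0,t_i)$, the triangle $\mathrm{conv}\{(0,t_i),(1,0),(0,1)\}$ is cut off from the origin by the chord through $(0,t_i)$ and $(1,0)$ and so lies at distance $\geq c\,t_i$ from $(t_i,0)$; the same planar picture governs the general case, since flipping the activation bit at $v^n_i$ only alters the triangle near $v^n_i$, and any further replacement point introduced by a simultaneous activation lies near the \emph{far} end of its edge, hence away from the corner at $v^n_i$. Placing the new vertices in the interior along the median would make this estimate impossible to obtain for both $p^i_0$ and $p^i_1$ at once, which is precisely why the edge placement is forced.

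Granting this estimate, the procedures are immediate. For (7) output $A_{m_a}:=\mathrm{conv}(\{p^i_a\}\cup\{v^n_\ell:\ell\neq i\})$ with a corner radius at $p^i_a$ much smaller than $c\,t_i$ and the radii at the untouched vertices kept equal to $r^n_\ell$: part (a) is monotonicity of $\mathrm{conv}$, and part (b) is the estimate. For (8), activate $v^n_i$ and $v^n_j$ independently and output $A_{m_{ab}}:=\mathrm{conv}\{p^i_a,p^j_b,v^n_k\}$; each clause of (b) flips one bit, hence reduces to the estimate near the corresponding vertex. For (9), activate all three vertices and output $A_{m_{abc}}:=\mathrm{conv}\{p^0_a,p^1_b,p^2_c\}$; again each clause of (b) flips a single bit. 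What then remains is routine bookkeeping: every output triangle is non-degenerate and contained in $\uint^2$; its three corner neighbourhoods are pairwise disjoint because they sit inside the pairwise disjoint balls of the input; and the finitely many smallness conditions on the $t_i$ and on the new corner radii are simultaneously satisfiable by a single rational choice computable from $n$, since for a fixed non-degenerate rational triangle every quantity that must be positive is bounded below by an explicit rational function of the vertices. I expect this bookkeeping to form the bulk of the written proof; the only genuine idea is the edge placement of the new vertices.
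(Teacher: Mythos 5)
Your construction is correct and rests on the same core idea as the paper's (three-sentence) proof: the sites are the corners of the triangle, and activating a corner means perturbing it in one of two ways so that each outcome's site is separated from the other outcome's triangle. The implementation differs, though. The paper splits the activated tip by the median from that vertex and takes the two \emph{half-tips} as the new sites, with sub-triangles each avoiding one half and meeting the other (this is what the figure depicts); you instead slide the vertex onto one of its two incident edges and take a small ball around the displaced point as the new site, with the separation coming from the chord through the other displaced point. Both work, and your distance estimate (in the model coordinates, $\mathrm{dist}((t_i,0),\mathrm{conv}\{(0,t_i),(1,0),(0,1)\})\geq c\,t_i$, stable under far-end perturbations) is the right quantitative core. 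Your remark that ``edge placement is forced'' because median placement would kill the estimate is, however, aimed at a strawman: the paper does not put point-sites on the median, it uses the median only as the dividing line between two region-sites, and that version works too. Two small points to fix in a write-up: $S^i_n:=A_n\cap B(v^n_i,r^n_i)$ is not open in $\uint^2$ as the definition requires --- just take $S^i_n=B(v^n_i,r^n_i)$ itself, which changes nothing in the verifications since all the sets it must avoid are contained in $A_n$; and your justification of condition (1) via ``exposed edges'' is loose, whereas the clean argument is that the vertices of a nested sequence of triangles subconverge by compactness of $\uint^2$ and $\mathrm{conv}$ is continuous on triples of points, so the Hausdorff limit (which equals the intersection for a decreasing sequence) is again a possibly degenerate triangle.
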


\begin{lemma}
\label{lemma:boxesargument}
Let $\mathfrak{A} \subseteq \mathcal{A}(\mathbf{X})$ have triple activable sites. Then $\C_\mathbf{X}|_\mathfrak{A} \nleqW \XC_1 \times \XC_1$.
\begin{proof}
Assume for the sake of a contradiction that $\C_\mathbf{X}|_\mathfrak{A} \leqW \XC_1 \times \XC_1$ via $K$, $H$. We apply the reduction to input $A_0$. Then $K(A_0,\cdot) : H(A_0) \to A_0$ is a continuous function. By uniform continuity of that function, we can obtain a prefix $w$ of $A_0$ and rectangular grid covering $H(A_0)$ such that for every cell and for every $i \in \{0,1,2\}$ it holds that $K(w,\cdot)$ maps the cell into $S_0^i$ or its image is disjoint from $\overline{S}_0^i$, without the function needing to know more than $w$ about its first input. We assign colours $\{0,1,2,\bot\}$ to cells in such a way that if a cell has colour $i \in \{0,1,2\}$, then it is mapped into $S_0^i$, and if its image intersects $\overline{S}_0^i$, then it has colour $i$. We call a cell \emph{coloured}, if its colour is $\{0,1,2,\}$, it is uncoloured otherwise.

We will in the process of the construction move from $A_0$ as input to other $A_n$ as chosen by the \emph{activating two sites} procedure. When doing that, we adjust our colour assignment to work with the $S_n^i$ and $\overline{S}_n^i$ instead. We observe that the only change in colour this can cause is to recolour a cell from $i \in \{0,1,2\}$ to $\bot$. Moreover, cells can disappear by falling outside of $H(A_)n)$. Note that this means that the number of coloured cells is thus a non-increasing property with a finite initial value. Should the situation arise where some colour $i \in \{0,1,2\}$ is not present as a colour of a cell at all, then choosing the corresponding site and moving the input $A_m \subseteq \overline{S}_n^i$ breaks the reduction, and provides the desired contradiction.

Generally, we will pick two cells $C_1,C_2$ and activate the sites corresponding to their colours. We inspect the outer reduction witness further to see which parts of $C_1$ are mapped into which of $S_{m_{ab}}^i$ and which parts of $C_2$ into which of $S_{m_{ab}}^j$. If any of those is not present in of the two cells, then moving on to the corresponding $A_{m_ab}$ as input next would force that cell to lose its colour. Assume all choices are present in both cells. Moving into $A_{m_{ab}}$ means that we need to remove $S^i_{m_{(1-a)b}}$ and $S^j_{m_{a(1-b)}}$ from $H(A_{a_{ab}})$, in particular from the cells $C_1$ and $C_2$. Moreover, $H(A_{a_{ab}})$ has to be rectangular and aligned to the boundary. We can thus view activating two sites as being followed by a choice of subregions within the corresponding cells (chosen by the opponent), and there subsequently being vertical or horizontal cuts that remove the subregions not selected by us.

We proceed to argue that in most configurations, these cuts will decrease the number of coloured cells by removing some of them. We make a case distinction based on the arrangement of coloured cells.

%{\bf Case 1:} There are coloured cells $C_0,C_1,C_2,C_3$ occurring in this order left-to-right (no constraints on vertical positioning), with $C_1$ and $C_2$ having different colours. We proceed to activate the two sites related to these colours. Any vertical cut would remove either $C_0$ or $C_3$. Pick two points $x_i < y_i$ (ordered by their $x$-coordinates from each cell $C_i$ (for $i \in \{1,2\}$) being mapped into the two subregions. Their overall order matches a pattern of the form is $x_i < x_{3-i} < y_i < y_{3-i}$ or $x_i < y_i < x_{3-i} < y_{3-i}$ or $x_i < x_{3-i} < y_{3-i} < y_i$. In each case, we can choose to remove the subregions corresponding to $y_i$ and $x_{3-i}$, which is impossible with horizontal cuts. This argument is illustrated in Figure \ref{fig:boxescase1}.

%\begin{figure}[h]
%\centering
%\includegraphics[width=0.5\textwidth]{boxes-case1.png}
%\caption{The subcases of Case 1 in the proof of Lemma \ref{lemma:boxesargument}}
%\label{fig:boxescase1}
%\end{figure}

{\bf Case 1:} There is a coloured cell $C_0$ such that other coloured cells are in at least three cardinal directions

We activate the colour corresponding to $C_0$. As above, this corresponds to either a horizontal or a vertical cut. By assumption, at least one side of the cut is such that moving into it removes another coloured cell. This is illustrated in Figure \ref{fig:boxescase2}.

\begin{figure}[h]
\centering
\includegraphics[width=0.3\textwidth]{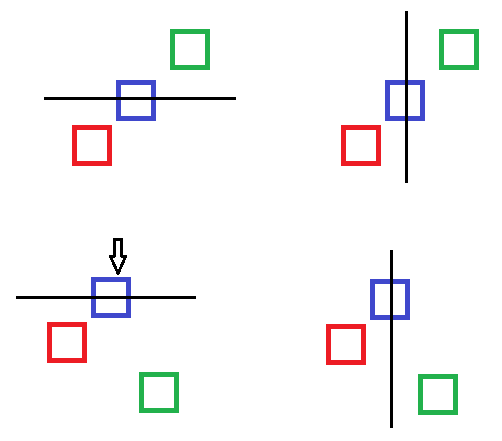}
\caption{The subcases of Case 1 in the proof of Lemma \ref{lemma:boxesargument}}
\label{fig:boxescase2}
\end{figure}

{\bf Case 2:} There are two different-coloured cells aligned horizontally or vertically, and another coloured cell off that alignment.

If Case 1 does not apply, we have (up to symmetry) the configuration depicted in Figure \ref{fig:boxescase3}. We can only cut away from the red cell from west and north, and from the blue cell only from north and east -- any other cut would remove an entire cell. We activate the two sites corresponding to red and blue, and consider the subregions in the two cells which we would remove by a cut from the north. We choose one that extends furthest south, and chose to remove it, but not the other. This is impossible with the allowed cuts.

\begin{figure}[h]
\centering
\includegraphics[width=0.3\textwidth]{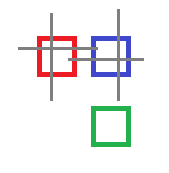}
\caption{Case 2 in the proof of Lemma \ref{lemma:boxesargument}}
\label{fig:boxescase3}
\end{figure}

{\bf Case 3:} There are three different-coloured cells aligned horizontally or vertically.

Up to symmetry, the configuration is depicted in Figure \ref{fig:boxescase4}. We activate all three sites. In the red box, there is at most one subregion which we can remove by cutting from the west, the other one (called $r$) requires a north or south cut. Symmetrically, in the green box, there is also one subregion (called $g$) which requires a north or south cut. W.l.o.g.~,assume that of these two subregions, the red one $r$ extends further south. In the blue box, the two subregions need to be removed by cuts from north or south (call the northern one $n$ and the southern one $s$). Now it is impossible to remove the subregions $r$ and $n$ but not $g$ with cuts that do not remove entire coloured cells.
\begin{figure}[h]
\centering
\includegraphics[width=0.5\textwidth]{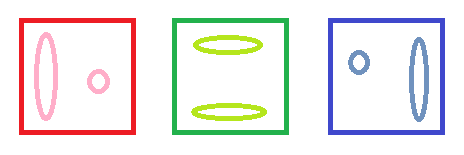}
\caption{Case 3 in the proof of Lemma \ref{lemma:boxesargument}}
\label{fig:boxescase4}
\end{figure}

\end{proof}
\end{lemma}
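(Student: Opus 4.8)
The plan is to prove the contrapositive by an explicit diagonalisation that builds the bad $\C_\mathbf{X}|_\mathfrak{A}$-instance adaptively. Suppose $\C_\mathbf{X}|_\mathfrak{A}\leqW\XC_1\times\XC_1$ via computable $H,K$, and recall that an instance of $\XC_1\times\XC_1$ is just an axis-parallel rectangle $R\subseteq\uint^2$. I start from the distinguished set $A_0$ of the definition of triple activable sites, so that $H(A_0)$ is a rectangle $R$ and $K(A_0,\cdot)\colon R\to A_0$ is continuous. By uniform continuity (of this function and of $K$ as a realizer) I fix a finite prefix $w$ of a name of $A_0$ together with a rational subdivision of $R$ into rectangular cells, fine enough that for every cell $C$ and every $i\in\{0,1,2\}$ the image of $C$ under $K(w,\cdot)$ is contained in $S^i_0$ or disjoint from $\overline{S}^i_0$, the function reading nothing beyond $w$ about its first input. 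Since the $S^i_0$ are pairwise disjoint and $\overline{S}^i_0\subseteq S^i_0$, this lets me colour each cell by an element of $\{0,1,2,\bot\}$ so that colour $i$ entails that the image of $C$ lies in $S^i_0$, while any cell whose image meets $\overline{S}^i_0$ must carry colour $i$. Call a cell \emph{coloured} if its colour lies in $\{0,1,2\}$.

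The construction maintains as a potential the number of coloured cells, finite at the outset. At each step I replace the current input $A_n$ by some $A_m\subseteq A_n$ produced by one of the procedures of the definition (choosing, killing, or activating sites) and re-colour the same fixed cells with respect to $S^i_m,\overline{S}^i_m$; by the containment clauses $S^\ell_m\subseteq S^\ell_n$, $\overline{S}^\ell_m\subseteq\overline{S}^\ell_n$ the only possible effect on a cell is a colour change from some $i\in\{0,1,2\}$ to $\bot$, and additionally cells may disappear as $H(A_m)\subseteq H(A_n)$ shrinks. Hence the potential is non-increasing, and since the nested sequence of inputs so produced has intersection in $\mathfrak{A}$ by clause~(1), it remains to analyse two situations. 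First, if ever some colour $i\in\{0,1,2\}$ occurs on no cell, then every cell's image is disjoint from $\overline{S}^i_n$ — coloured cells of other colours land in some $S^j_n$ disjoint from $S^i_0\supseteq\overline{S}^i_n$, and $\bot$-cells miss $\overline{S}^i_0\supseteq\overline{S}^i_n$ — so moving to the input $A_m\subseteq\overline{S}^i_n$ furnished by \emph{choosing a site} contradicts the reduction: any point of $H(A_m)\subseteq R$ lies in a cell whose $K(w,\cdot)$-image misses $A_m$, yet must be mapped into $A_m$. Second, the real content of the lemma is that as long as all three colours are present one can always make a move that strictly decreases the potential; since the potential is a non-negative integer, iterating this drives us into the first situation after finitely many steps.

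To obtain the decrease I exploit that $\XC_1\times\XC_1$ only provides rectangular queries. When I \emph{activate} one, two, or three sites and then commit to one of the produced sub-instances $A_{m_w}$, the witness must return $H(A_{m_w})$ as an axis-parallel rectangle from which the relevant killed copies $S^\ell_{m_{w'}}$ have been removed; equivalently, inside each affected coloured cell the opponent subdivides the part mapped into the site, I choose which combination of the resulting subregions shall survive, and the opponent may delete the non-surviving ones only by horizontal or vertical half-plane cuts. I then run a case analysis, exhaustive up to symmetry once all three colours appear, on the arrangement of the coloured cells in the grid: \textbf{Case~1}, some coloured cell $C_0$ has other coloured cells in at least three of the four cardinal directions, where activating the colour of $C_0$ yields a single horizontal or vertical cut, one of whose sides I can move into so as to swallow a further coloured cell; \textbf{Case~2}, two differently coloured cells are axis-aligned with another coloured cell off that line and Case~1 fails, where activating the two sites and committing to a suitable pair of surviving subregions (the one reaching furthest south among those removable only by a north cut) forces a single cut to perform two incompatible deletions; and \textbf{Case~3}, three differently coloured cells are collinear, handled by activating all three sites and the same ``furthest-south subregion'' argument. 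In every case a coloured cell is eliminated, so the potential drops.

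\textbf{Main obstacle.} The genuine difficulty lies entirely in the last paragraph: checking that Cases~1--3 really exhaust every configuration of coloured cells in which all three colours occur, and verifying in each case that the combination of subdivisions I commit to cannot be cut out by axis-parallel half-plane cuts without removing a coloured cell — no matter how the opponent distributes tokens among the $A_{m_w}$ and then picks the surviving sub-rectangle. The remaining ingredients (the uniform-continuity extraction and the colouring, monotonicity of the potential, and clause~(1) keeping the nested input admissible) are routine.
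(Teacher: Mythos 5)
Your proposal follows essentially the same route as the paper's own proof: the same uniform-continuity extraction of a prefix $w$ and a coloured rectangular grid, the same non-increasing potential (number of coloured cells) with termination via the \emph{choosing a site} procedure once a colour vanishes, and the identical three-case analysis (a cell with coloured neighbours in three cardinal directions; two aligned differently-coloured cells plus one off the line; three collinear differently-coloured cells) resolved by the same ``furthest-south subregion'' cut-incompatibility argument. The step you flag as the main obstacle is exactly the part the paper also treats only at the level of figures, so your reconstruction matches the published argument in both structure and level of detail.
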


\section{Convex choice and compositions}
\label{sec:convexcomposition}

Our next theorem gives a negative answer to \cite[Question 3.14]{paulyleroux}, which was again raised as an open problem at a Dagstuhl seminar on Weihrauch reducibility, c.f.~\cite{pauly-dagstuhl}:

\begin{theorem}\label{maintheorem2}
${\sf XC}_1\star{\sf AoUC}\not\leq_W{\sf XC}_k$ for all $k\in\mathbb{N}$.
\end{theorem}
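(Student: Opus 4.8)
The plan is to apply the recursion-theorem machinery of Theorem \ref{theo:recursionweihrauch} with $f = {\sf XC}_1 \star {\sf AoUC}$ and $g = {\sf XC}_k$. Concretely, we must describe a computable procedure that, given a name for an ${\sf XC}_k$-instance $A \in \mathcal{A}([0,1]^k)$ (convex) together with an outer reduction witness $k(\cdot)$ (the Meyer--continuous map turning a point of $A$ into a candidate answer), produces an instance of ${\sf XC}_1 \star {\sf AoUC}$ on which $k$ fails. Since $f \leqW g$ would entail in particular that the whole ${\sf XC}_1 \star {\sf AoUC}$ chain is captured by a single level of convex choice, the intuition is that ${\sf AoUC}$ lets us \emph{rewind}: after watching where the opponent's reduction commits inside the convex set $A$, we can supply fresh ${\sf XC}_1$-instances downstream that the already-committed data cannot cope with. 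The point of the recursion theorem is that we are allowed to \emph{be} the adversary choosing the $f$-instance after seeing $g$'s behaviour, so we never have to quantify over all reductions; we just have to win one interactive game.

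The key technical tool will be a dimension/displacement bound on convex choice analogous to Theorem \ref{theo:convexsort} and the displacement principle in Subsection \ref{subsec:displacement}. First I would establish that a solution path through the convex set $A \subseteq [0,1]^k$ can be pinned down by at most $k$ successive ``hyperplane-hit'' events: as in the proof of Theorem \ref{theo:convexsort}, each time $A$ is found to meet a rational hyperplane we replace $A$ by its (still convex, lower-dimensional) intersection, and this can happen at most $k$ times before $A$ collapses to a point. This gives a bound on how much ``genuine branching'' an ${\sf XC}_k$-instance can exhibit along the computation. Against this, I would build the ${\sf XC}_1 \star {\sf AoUC}$-instance adaptively: use the leading ${\sf AoUC}$ to let the opponent's realizer pick a face/coordinate (the ${\sf AoUC}$ is ``either unique or totally free''), observe which branch of $A$ the outer witness $k$ steers toward, and then on the ${\sf XC}_1$-side in the compositional product feed an interval-choice instance whose only admissible answer lies in a region of $A$ that the committed data has already excluded. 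Since ${\sf AoUC}$ can be solved trivially by the constant answer when we make its instance the whole space, but must be solved by the \emph{forced} answer when we make it a singleton, we can always flip it against any finite commitment the reduction has made --- this is the same ``change $p$ after stage $k'$'' trick used in the displacement proofs above, lifted one level up via $\star$.

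Here is the step-by-step outline. (1) Record the recursion-theorem reformulation: it suffices to give $e\colon (A,k) \mapsto (\text{an } {\sf XC}_1\star{\sf AoUC}\text{-instance})$ computable, such that no solution of that instance maps under $k$ into $A$. (2) Unfold the definition of $\star$ via the explicit construction of \cite{paulybrattka4}, so that an instance of ${\sf XC}_1 \star {\sf AoUC}$ is really: an ${\sf AoUC}$-instance, plus a continuous map from its solution to an ${\sf XC}_1$-instance. (3) Run the opponent's reduction on a generic partial input to $A$, watch the outer witness $k$; by the $\le k$-hyperplane-collapse bound, after finitely many stages $k$ must have committed to a neighbourhood of $A$ that is essentially determined, i.e.\ its future behaviour on the as-yet-undetermined part of $A$ is locally constant in a way we can detect. (4) Use ${\sf AoUC}$ to present the opponent a binary fork: leaving one branch ``free'' (whole space) and collapsing the other to a forced point, chosen after seeing which branch $k$'s commitment rules out. (5) On the surviving branch, define the ${\sf XC}_1$-component so that its unique interval of solutions lies precisely where $k$ can no longer reach inside $A$ --- concretely, shrink $A$ (legitimately, as it is precomplete and we are in $\mathcal{A}$) so the forced answer region is avoided by $k$'s image. (6) Conclude via Theorem \ref{theo:recursionweihrauch}, letting $k$ range over all $k \in \mathbb{N}$ since the construction is uniform in $k$.

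The main obstacle I anticipate is step (3)--(5): making precise the claim that ``after finitely many stages the convex-choice realizer has committed enough that we can beat it with one ${\sf AoUC}$ followed by one ${\sf XC}_1$''. The danger is that the ${\sf XC}_k$-realizer defers its real choice indefinitely, spreading the branching across all $k$ dimensions at once rather than sequentially, whereas our ${\sf XC}_1 \star {\sf AoUC}$ only gives us \emph{two} sequential choice events (one ${\sf AoUC}$, one ${\sf XC}_1$). Resolving this will require a careful bookkeeping argument --- likely a priority-style construction reminiscent of the proof of Theorem \ref{theo:finitebelowsort}, with substages indexed by which hyperplane-collapse has occurred --- to force the realizer to reveal its hand on \emph{some} coordinate early, so that the single later ${\sf XC}_1$-instance suffices to diagonalize. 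An alternative, possibly cleaner route is to first prove the auxiliary separation ${\sf XC}_1 \star {\sf AoUC} \nleqW {\sf XC}_1^n$ for all $n$ (rectangles, via Lemma \ref{lemma:boxesargument}-style triple-activable-sites reasoning) and then bootstrap to ${\sf XC}_k$ using that $\XC_k$'s branching, once a dimension is spent, degenerates to lower-dimensional convex choice; but I expect the direct recursion-theorem construction to be the intended and more robust argument.
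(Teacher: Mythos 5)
Your framework is right---apply Theorem \ref{theo:recursionweihrauch}, unfold $\star$ via \cite{paulybrattka4} into an $\aouc$-instance together with a continuous map from its solutions to $\XC_1$-instances---and this matches the paper's setup (the paper packages this as $Z_1\circ Z_0$ with $Z_0=\aouc\times\mathrm{id}$ and $Z_1=(\mathrm{id},\XC_1\circ\mathrm{eval})$). But the obstacle you flag in your final paragraph is genuine and your sketch does not overcome it; the core of the argument is missing. Two points. First, you treat the downstream component as ``one $\XC_1$-instance'' presented after a single binary $\aouc$-fork. What the composition actually provides, and what the paper exploits, is a whole continuous \emph{family} $J(x)$ of interval instances indexed by the potential $\aouc$-answers $x\in\Cantor$: the adversary uses the $k$ many candidate answers $0^q1 0^\omega$ ($q<k$) to run $k$ independent interval-choice attacks $J(0^q1)$ in parallel, one per dimension of $P_e$. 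A single fork plus a single interval cannot defeat a $k$-dimensional convex set, precisely for the reason you yourself identify. Second, your proposed dimension-collapse mechanism (the ``hyperplane-hit'' bound from Theorem \ref{theo:convexsort}) points the wrong way: that argument serves the \emph{positive} reduction $\XC_n\leqW \Sort_{n+1}$ and gives the adversary no leverage, since the opponent controls $P_e$ and can keep its upper approximations full-dimensional forever. The paper's actual engine is a measure-halving argument: the $q$-th substrategy splits $J_e(0^q1)$ into two well-separated subintervals $I_0,I_1$; the sets $Q_0,Q_1$ of points of (the relevant relatively open piece of) $P_e$ whose names could still produce $\psi_e$-values in $I_i$ cover that piece with $\lambda^{k-q}$-null overlap, so one of them has at most half the measure, and resetting $J_e(0^q1)$ to the corresponding $I_i$ traps the opponent in it. Iterating drives $\lambda^{k-q}$ of a nonempty relatively open subset of $P_e$ to $0$, which by convexity forces the dimension of $P_e$ down by one; after $k$ rounds $P_e$ is a point and the $\aouc$-tree is diagonalized directly. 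Making ``one can effectively confirm that the measure is small'' work also requires the thin-expansion/compactness apparatus and the choice of a representation $\delta$ of $\uint^k$ with $\lambda^d(L\cap\mathrm{cl}(\delta[V])\setminus\delta[V])=0$, none of which is anticipated in your outline.

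Your fallback suggestion (bootstrap from Lemma \ref{lemma:boxesargument}) also does not go through: that lemma and the triple-activable-sites machinery are tailored to the \emph{parallel} product $\XC_1\times\XC_1$ and to showing $\XC_2|_{\mathfrak{T}}\nleqW\XC_1\times\XC_1$; they give no handle on a general $\XC_k$-realizer, and there is no obvious ``degeneration to lower-dimensional convex choice'' that the opponent is obliged to exhibit. In short: steps (1)--(2) and (6) of your plan are correct and agree with the paper, but steps (3)--(5) --- the entire adversarial content --- would need to be replaced by the family-of-intervals-per-dimension idea plus the measure-halving dimension-reduction argument.
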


\begin{proof}
We will need some geometric arguments involving convexity, measure and dimension.
If a convex set $X\subseteq[0,1]^k$ is at most $d$-dimensional, then $X$ is included in a $d$-dimensional hyperplane $L\subseteq[0,1]^k$ by convexity.
It is easy to define the $d$-dimensional Lebesgue measure $\lambda^d$ on $L$ which is consistent with the $d$-dimensional volume on $d$-parallelotopes in $[0,1]^k$.

Let $(X[s])_{s\in\omega}$ be an upper approximation  of a convex closed set $X\subseteq[0,1]^k$.
Even if we know that $X$ is at most $d$-dimensional for some $d<k$, it is still possible that $X[s]$ can always be at least $k$-dimensional for all $s\in\omega$.
Fortunately, however, by compactness one can ensure that for such $X$, say $X\subseteq L$ for some $d$-hyperplane $L$ by convexity, $X[s]$ for sufficiently large $s$ is eventually covered by a {\em thin} $k$-parallelotope $\widehat{L}$ obtained by expanding $d$-hyperplane $L$.
For instance, if $X\subseteq[0,1]^3$ is included in the plane $L=\{1/2\}\times[0,1]^2$, then for all $t\in\omega$, there is $s\in\omega$ such that $X[s]\subseteq\widehat{L}(2^{-t}):=[1/2-2^{-t},1/2+2^{-t}]\times[0,1]^2$ by compactness.
We call such $\widehat{L}(2^{-t})$ as the {\em $2^{-t}$-thin expansion} of $L$.

We give a formal definition of the $2^{-t}$-thin expansion of a subset $Y$ of a hyperplane $L$.
A $d$-hyperplane $L\subseteq[0,1]^k$ is named by a $d$-many linearly independent points in $[0,1]^k$.
If $(x_i)_{i<d}$ is linearly independent, then this fact is witnessed at some finite stage.
Therefore, there is a computable enumeration $(L^d_e)_{e\in\omega}$ of all rational $d$-hyperplanes.
A rational closed subset of $L$ is the complement of the union of finitely many rational open balls in $L$.
Given a pair $(L,Y)$ of (an index of) a rational $d$-hyperplane $L\subseteq[0,1]^k$ and a rational closed set $Y\subseteq L$, we define the {\em $2^{-t}$-thin expansion of $Y$ on $L$} as follows:
We calculate an orthonormal basis $(\mathbf{e}_1,\dots,\mathbf{e}_{k-d})$ of the orthogonal complement of the vector space spanned by $L-v$ where $v\in L$, and define
\[\widehat{Y}(2^{-t})=\left\{v+\sum_{i=1}^{k-d} a_i\mathbf{e}_i:v\in Y\mbox{ and }-2^{-t}\leq a_i\leq 2^{-t}\mbox{ for any }i\leq k-d\right\},\]

Since $Y$ is a rational closed set, we can compute the measure $\lambda^d(Y)$.
Indeed, we can compute the maximum value $m^d(Y,t)$ of $\lambda^d(\widehat{Y}(2^{-t})\cap L')$ where $L'$ ranges over all $d$-dimensional hyperplanes.
For instance, if $Y=[0,s]\times\{y\}$, it is easy to see that $m^1(Y,2^{-t})$ is the length $\sqrt{s^2+2^{-2t+2}}$ of the diagonal of the rectangle $\widehat{Y}(2^{-t})=[0,s]\times[y-2^{-t},y+2^{-t}]$.

Let us assume that we know that a convex set $X\subseteq[0,1]^k$ is at most $d$-dimensional, and moreover, a co-c.e.~closed subset $\tilde{X}$ of $X$ satisfies that $\lambda^d(\tilde{X})<r$.
What we will need is to find, given $\varepsilon>0$, stage $s$ witnessing that $\lambda^d(\tilde{X})<r+\varepsilon$ under this assumption.
Of course, generally, there is no stage $s$ such that $\lambda^d(\tilde{X}[s])<r+\varepsilon$ holds since $\tilde{X}[s]$ may not even be $d$-dimensional for all $s\in\omega$ as discussed above.
To overcome this obstacle, we again consider a thin expansion of (a subset of) a hyperplane.
Indeed, given $t>0$, there must be a rational closed subset $Y$ of a $d$-dimensional rational hyperplane $L$ such that $Y$ is very close to $\tilde{X}$, and that $\tilde{X}$ is covered by the $2^{-t}$-thin expansion $\widehat{Y}(2^{-t})$ of $Y$.
That is, by compactness, it is not hard to see that given $\varepsilon>0$, one can effectively find $s,Y,t$ such that
\[\tilde{X}[s]\subseteq\widehat{Y}(2^{-t}) \mbox{ and } m^d(Y,t)<r+\varepsilon.\]

In this way, if the inequality $\lambda^d(\tilde{X})<r$ holds for a co-c.e.~closed subset $\tilde{X}$ of a $d$-dimensional convex set $X$, then one can effectively confirm this fact.

We next consider some nice property of an admissible representation of $[0,1]^k$.
It is well-known that $[0,1]^k$ has an admissible representation $\delta$ with an effectively compact domain $[T_\delta]$ such that $\delta$ is an effectively open map (see \cite{weihrauchd}).
%We say that a function $\delta:\subseteq 2^\mathbb{N}\to X$ is {\em effectively fiber-compact} if $\delta^{-1}[\cdot]:X\to \mathcal{A}(2^\mathbb{N})$ is computable.
%It is known that every computable metric space $X$ admits an effectively fiber-compact admissible representation $\delta$ (see Weihrauch).
In particular, $\delta^{-1}[P]$ is compact for any compact subset $P\subseteq[0,1]^k$.
Additionally, we can choose $\delta$ so that, given a finite subtree $V\subseteq T_\delta$, one can effectively find an index of the co-c.e.~closed set ${\rm cl}(\delta[V])$, and moreover, $\lambda^{d}(L\cap {\rm cl}(\delta[V])\setminus\delta[V])=0$ for any $d$-dimensional hyperplane $L\subseteq[0,1]^k$ for $d\leq k$.
For instance, consider a sequence of $2^{-n}$-covers $(\mathcal{C}^n_k)_{k<b(n)}$ of $[0,1]^k$ consisting of rational open balls, and then each $b$-bounded string $\sigma$ codes a sequence of open balls $(\mathcal{C}^n_{\sigma(n)})_{n<|\sigma|}$.
Then we may define $T_\delta$ as the tree consisting of all $b$-bounded sequences that code strictly shrinking sequences of open balls, and $\delta(p)$ as a unique point in the intersection of the sequence coded by $p\in[T_\delta]$.
It is not hard to verify that $\delta$ has the above mentioned properties.
Hereafter, we fix such a representation $\delta:[T_\delta]\to[0,1]^k$.

Now we are ready to prove the assertion.
Let ${\rm ITV}_{[0,1]}$ denote the subspace of $\mathcal{A}([0,1])$ consisting of nonempty closed intervals in $[0,1]$.
Consider the following two partial multi-valued functions:
\begin{align*}
Z_0&:={\sf AoUC}\times{\rm id}:{\rm dom}({\sf AoUC})\times C(2^\mathbb{N},{\rm ITV}_{[0,1]})\rightrightarrows 2^\mathbb{N}\times C(2^\mathbb{N},{\rm ITV}_{[0,1]}),\\
Z_0&(T,J)={\sf AoUC}(T)\times\{J\},\\
Z_1&:=({\rm id}\circ\pi_0,{\sf XC}_1\circ{\rm eval}):2^\mathbb{N}\times C(2^\mathbb{N},{\rm ITV}_{[0,1]})\rightrightarrows 2^\mathbb{N}\times [0,1],\\
Z_1&(x,J)=\{x\}\times{\sf XC}_1(J(x)).
\end{align*}

Clearly, $Z_0\leq_W{\sf AoUC}$ and $Z_1\leq_W{\sf XC}_1$.
We will show that $Z_1\circ Z_0\not\leq_{\rm W}{\sf XC}_k$.

We will apply Theorem \ref{theo:recursionweihrauch}.
Let $\{(P_e,\varphi_e,\psi_e)\}_{e\in\mathbb{N}}$ be an effective enumeration of all co-c.e.~closed subsets of $[0,1]^k$, partial computable functions $\varphi_e:\subseteq \mathbb{N}^\mathbb{N}\to 2^\mathbb{N}$ and $\psi_e:\subseteq\mathbb{N}^\mathbb{N}\to[0,1]$.
Intuitively, $(P_e,\varphi_e,\psi_e)$ is a triple constructed by the opponent ${\sf Opp}$, who tries to show $Z_1\circ Z_0\leq_W{\sf XC}_k$ for some $k$.
The game proceeds as follows:
We first give an instance $(T_r,J_r)$ of $Z_1\circ Z_0$.
Then, ${\sf Opp}$ reacts with an instance $P_r$ of ${\sf XC}_k$, that is, a convex set $P_r\subseteq[0,1]^k$, and ensure that whenever $z$ is a name of a solution of $P_r$, $\varphi_r(z)=x$ is a path through $T_r$ and $\psi_r(z)$ chooses an element of the interval $J_r(x)$, where {\sf Opp} can use information on (names of) $T_r$ and $J_r$ to construct $\varphi_r$ and $\psi_r$.
By Theorem \ref{theo:recursionweihrauch}, to show the desired assertion, we only need to prevent ${\sf Opp}$'s strategy.

Hereafter, $P_e[s]$ denotes the stage $s$ upper approximation of $P_e$.
We identify a computable function $\varphi_e$ ($\psi_e$, resp.)~with a c.e.~collection $\Phi_e$ of pairs $(\sigma,\tau)$ of strings $\sigma\in\mathbb{N}^{<\mathbb{N}}$ and $\tau\in 2^{<\mathbb{N}}$ ($\Psi_e$ of pairs $(\sigma,D)$ of strings $\sigma\in\mathbb{N}^{<\mathbb{N}}$ and rational open intervals in $[0,1]$, resp.)~indicating that $\varphi_e(x)\succ\tau$ for all $x\succ\sigma$ ($\psi_e(x)\in D$ for all $x\succ\sigma$, resp.)
We use the following notations:
\begin{align*}
\Phi_e^q[s]&=\{(\sigma,\tau)\in\Phi_e[s]:|\tau|\geq q\},\\
\Psi_e^t[s]&=\{(\sigma,D)\in\Psi_e[s]:{\rm diam}(D)<3^{-t}\}.
\end{align*}

For a relation $\Theta\subseteq X\times Y$, we write ${\rm Dom}\Theta$ for the set $\{x\in X:(\exists y\in Y)\;(x,y)\in\Theta\}$.
We also use the following notations:
\begin{align*}
(\Phi_e[s])^{-1}[\rho]&=\bigcup\{\sigma\in{\rm Dom}\Phi_e^{|\rho|}[s]:\tau\succeq\rho\},\\
(\Psi_e[s])^{-1}_t[I]&=\bigcup\{\sigma:(\exists D)\;(\sigma,D)\in\Psi^{t}_e[s],\;{\rm diam}(D)<3^{-t}\mbox{ and }D\cap I\not=\emptyset\},
\end{align*}
%where $I_i=[i/2^{-t-1},(i+1)/2^{-t-1}]$.

Given $e$, we will construct a computable a.o.u.~tree $T_e$ and a computable function $J_e:2^\mathbb{N}\to{\rm ITV}_{[0,1]}$ in a computable way uniformly in $e$.
These will prevent ${\sf Opp}$'s strategy, that is, there is a name $z$ of a solution of $P_e$ such that if $\varphi_e(z)=x$ chooses a path through $T_e$ then $\psi_e(z)$ cannot be an element of the interval $J_e(x)$.
We will also define ${\tt state}(e,q,s)\in\mathbb{N}\cup\{\tt end\}$.
The value ${\tt state}(e,q,s)=t(q)$ indicates that at stage $s$, the $q$-th substrategy of the $e$-th strategy executes the action {\em forcing the measure $\lambda^{k-q}(\tilde{P}_e)$ of a nonempty open subset $\tilde{P}_e$ of $P_e$ to be less than or equal to $2^{q-t(q)}\cdot\varepsilon_{t(q)}$} with $\varepsilon_{t(q)}:=\sum_{j=0}^{t(q)+1}2^{-j}<2$.
Therefore, if ${\tt state}(e,q,s)$ tends to infinity as $s\to\infty$, then the $q$-th substrategy eventually {\em forces $P_e$ to be at most $(k-q-1)$-dimensional} under the assumption that $P_e$ is convex.
First define ${\tt state}(e,q,0)=0$, and we declare that the $q$-th substrategy is sleeping (i.e., not active) at the beginning of stage $0$.
At stage $s$, we inductively assume that $T_e\cap 2^{s-1}$ and ${\tt state}(e,q,s-1)$ have already been defined, say ${\tt state}(e,q,s-1)=t(q)$, and if ${\tt state}(e,q,0)\not={\tt end}$ then $T_e\cap 2^{s-1}=2^{s-1}$.

Our strategy is as follows:
\begin{enumerate}
\item At the beginning of stage $s$, we start to monitor the first substrategy $p$ which is still asleep.
That is, we calculate the least $p<k$ such that ${\tt state}(e,p,s-1)=0$.
If there is no such $p\leq k$, then go to (3).
Otherwise, go to (2) with such $p\leq k$.
\item Ask whether $\varphi_e(z)$ already computes a node of length at least $p+1$ for any name $z$ of an element of $P_e$.
In other words, ask whether $\delta^{-1}[P_e]\subseteq[{\rm Dom}\Phi^{p+1}_e[s]]$ is witnessed by stage $s$.
By compactness, if this inclusion holds, then it holds at some stage.
\begin{enumerate}
\item If no, go to substage $0$ in the item (4) after setting ${\tt state}(e,p,s)=0$.
\item If yes, the substrategy $p$ now starts to act (we declare that the substrategy $q$ is {\em active} at any stage after $s$), and go to (3).
\end{enumerate}
\item For each substrategy $q$ which is active at stage $s$, ask whether there is some $\tau\in 2^{q+1}$ such that any point in $P_e$ has a name $z$ such that $\varphi_e(z)$ does not extend $\tau$.
In other words, ask whether
\[(\exists\tau\in 2^{q+1})\;P_e\subseteq\bigcup\{\delta[\varphi_e^{-1}[\rho]]:\rho\in 2^{q+1}\mbox{ and }\rho\not=\tau\}\]
is witnessed by stage $s$.
Note that $\delta[\varphi_e^{-1}[\rho]]$ is c.e.~open since it is the image of a c.e.~open set under an effective open map.
Therefore, by compactness, if the above inclusion holds, then it holds at some stage.
\begin{enumerate}
\item If no for all such $q$, go to substage $0$ in the item (4).
%after setting ${\tt state}(e,q,s)=\max\{1,{\tt state}(e,q,s-1)\}$ for all such $q$.
\item If yes with some $q$ and $\tau$, we finish the construction by setting ${\tt state}(e,0,s)={\tt end}$ after defining $T_e$ as a tree having a unique infinite path $\tau\fr 0^\omega$.
This construction witnesses that any point of $P_e$ has a name $z$ such that $\varphi_e(z)\not\in[T_e]$ and hence, ${\sf Opp}$'s strategy fails.
\end{enumerate}
\item Now we describe our action at substage $q$ of stage $s$.
If $q\geq k$ or $q$ is not active at stage $s$, go to (1) at the next stage $s+1$ after setting $T_e\cap 2^{s}=2^s$.
Otherwise, go to (5).
\item Ask whether for any name $z$ of a point of $P_e$, whenever $\varphi_e(z)$ extends $0^q1$, the value of $\psi_e(z)$ is already approximated with precision $3^{-t(q)-2}$.
In other words, ask whether
\[\delta^{-1}[P_e]\cap\varphi_e^{-1}[0^q1]\subseteq[{\rm Dom}\Psi_e^{t(q)+2}[s]]\]
is witnessed by stage $s$.
Again, by compactness, this is witnessed at some finite stage.
\begin{enumerate}
\item If no, go to substage $q+1$ after setting ${\tt state}(e,q,s)=t(q)$.
\item If yes, go to (6).
\end{enumerate}
\end{enumerate}

Before describing the action (6), we need to prepare several notations.
We first note that $\delta^{-1}[P_e]\cap\varphi_e^{-1}[0^q1]$ is compact, and therefore, there is a tree $V^q\subseteq T_\delta$ (where ${\rm dom}(\delta)=[T_\delta]$) such that $[V^q]=\delta^{-1}[P_e]\cap\varphi_e^{-1}[0^q1]$.
Moreover, since we answered in the affirmative in the item (5), by compactness, there is a sufficiently large height $l$ such that every $\sigma\in V^q$ of length $l$ has an initial segment $\sigma'\preceq\sigma$ such that $(\sigma',D_\sigma)\in\Psi^{t(q)+2}_e$ for some interval $D_\sigma\subseteq[0,1]$ with ${\rm diam}(D_\sigma)<3^{-t(q)-2}$.
Given $\sigma\in V^q$ of length $l$, one can effectively choose such $D_\sigma$.
We will define pairwise disjoint intervals $I_0$ and $I_1$ which are sufficiently separated so that if ${\rm diam}(D)<3^{-t(q)-2}$ then $D$ can only intersects with one of them.
Then for every $\sigma\in V^q$ of length $l$, we define $h_{t(q)}(\sigma)=i$ if $D_\sigma\cap I_i\not=\emptyset$ for some $i<2$, otherwise put $h_{t(q)}(\sigma)=2$.

Now we inductively assume that $J_e(0^q1)[s-1]$ is a closed interval of the form $[3^{-t(q)}\cdot k,3^{-t(q)}\cdot (k+1)]$ for some $k\in\mathbb{N}$. Then, define $I_i=[3^{-t(q)-1}\cdot (3k+2i),3^{-t(q)-1}\cdot (3k+2i+1)]$ for each $i<2$.
Note that $I_0$ and $I_1$ be pairwise disjoint closed subintervals of $J_e(0^q1)[s-1]$.
Moreover, $I_0$ and $I_1$ satisfy the above mentioned property since the distance between $I_0$ and $I_1$ is $3^{-t(q)-1}$.
Therefore, $h$ is well-defined on $V^q\cap \omega^l$.

We consider ${\rm cl}(\delta[V^q])=P_e\cap{\rm cl}(\delta[\varphi^{-1}[0^q1]])$.
By the property of $\delta$, the set $P^q_e$ is co-c.e.~closed, and $\lambda^{k-q}({\rm cl}(\delta[V^q])\setminus\delta[V^q])=0$ whenever $P_e$ is at most $(k-q)$-dimensional.
Then, define $Q^{t(q)}_i$ for each $i<2$ as the set of all points in ${\rm cl}(\delta[V^q])$ all of whose names are still possible to have $\psi_e$-values in $I_i$.
More formally, define $Q^{t(q)}_i$ as follows:
\begin{align*}
V^{t(q)}_i&=V^q\cap 2^l\cap\left(h_{t(q)}^{-1}\{1-i\}\cup h_{t(q)}^{-1}\{2\}\right),\\
Q^{t(q)}_i&={\rm cl}(\delta[V^q])\setminus\delta[V^{t(q)}_i].
\end{align*}

Obviously, $V^{t(q)}_0\cup V^{t(q)}_1=V^q\cap 2^l$, and $Q^{t(q)}_i$ is effectively compact since $V^{t(q)}_i$ generates a clopen set for each $i<2$.
Moreover, we have that $\lambda^{k-q}(Q^{t(q)}_0\cap Q^{t(q)}_1)=0$ whenever $P_e$ is at most $(k-q)$-dimensional since $Q^{t(q)}_0\cap Q^{t(q)}_1\subseteq{\rm cl}(\delta[V^q])\setminus\delta[V^q]$ and $\lambda^{k-q}({\rm cl}(\delta[V^q])\setminus\delta[V^q])=0$.
Now, the $q$-th substrategy believes that we have already forced $\lambda^{k-q}(\delta[V^q])\leq 2^{q-t(q)+1}\cdot\varepsilon_{t(q)-1}$ and therefore, $\lambda^{k-q}(Q^{t(q)}_i)\leq 2^{q-t(q)}\cdot\varepsilon_{t(q)-1}$ for some $i<2$ since $\lambda^{k-q}(Q^{t(q)}_0\cap Q^{t(q)}_1)=0$ as mentioned above.
Here recall that we have $1\leq\varepsilon_{t(q)-1}<\varepsilon_{t(q)}<2$.
Now, we state the action (6):

\begin{enumerate}
\item[(6)]
Ask whether by stage $s$ one can find a witness for the condition that $\lambda^{k-q}(Q^{t(q)}_i)\leq 2^{q-t(q)}\cdot\varepsilon_{t(q)-1}$ for some $i<2$.
That is, ask whether one can find $Y,t,i$ by stage $s$ such that
\[Q^{t(q)}_i[s]\subseteq\widehat{Y}(2^{-t})\mbox{ and }m^{k-q}(Y,t)<2^{q-t(q)}\cdot\varepsilon_{t(q)}.\]
\begin{enumerate}
\item If no, go to substage $q+1$ after setting ${\tt state}(e,q,s)=t(q)$.
\item If yes, define $J_e(0^q1)[s]=I_i$, and go to substage $q+1$ after setting ${\tt state}(e,q,s)=t(q)+1$.
\end{enumerate}
\end{enumerate}

Eventually, $T_e$ is constructed as an a.o.u.~tree, and $J_e(x)$ is an nonempty interval for any $x$.
We now show that if {\sf Opp}'s reaction to our instance $(T_e,J_e)$ is $(P_e,\varphi_e,\psi_e)$, then {\sf Opp} loses the game.

\begin{claim}
Suppose that $P_e$ is a nonempty convex subset of $[0,1]^k$.
Then, there is a realizer $G$ of ${\sf XC}_k$ such that $(\varphi_e\circ G(\delta^{-1}[P_e]),\psi_e\circ G(\delta^{-1}[P_e])$ is not a solution to $Z_1\circ Z_0(T_e,J_e)$, that is, $\varphi_e\circ G(\delta^{-1}[P_e])\not\in[T_e]$ or otherwise $\psi_e\circ G(\delta^{-1}[P_e])\not\in J_e\circ\varphi_e\circ G(\delta^{-1}[P_e])$.
\end{claim}

\begin{proof}
Suppose not, that is, {\sf Opp} wins the game with $(P_e,\varphi_e,\psi_e)$ as a witness.
We first assume that $P_e$ is at most $(k-q)$-dimensional.
By our assumption, $\varphi_e$ is defined on all points in $\delta^{-1}[P_e]$.
By compactness of $\delta^{-1}[P_e]$, there is stage $s$ satisfying (2).

Suppose that there is $\tau\in 2^{q+1}$ such that $P_e[s]\subseteq\bigcup\{\delta[\varphi_e^{-1}[\rho]]:\rho\in 2^{p+1}\mbox{ and }\rho\not=\tau\}$.
This means that any point $x\in P_e[s]$ has a name $\alpha_x\in\delta^{-1}\{x\}$ such that $\varphi_e(\alpha_x)\not\succ\tau$.
Since $P_e\not=\emptyset$, if a realizer $G$ chooses such a name $\alpha_x$ of a point $x\in P_e$, then $\varphi_e\circ G(\delta^{-1}[P_e])\not\in[T_e]=\{\tau\fr 0^\omega\}$, that is, {\sf Opp} fails to find a path of $T_e$, which contradicts our assumption.

Thus, $\delta^{-1}[P_e]\cap\varphi_e^{-1}[0^q1]$ is nonempty.
Since this is compact and $\psi_e$, for any $t$, there is stage $s$ satisfying (5) with $t(q)=t$.
We can always assume that $\lambda^{k-q}(P_e)<2^{q+1}$ since $P_e$ is an at most $(k-q)$-dimensional convex set, $P_e$ is included in a $(k-q)$-dimensional hyperplane, and the $(k-q)$-dimensional Lebesgue measure of any $(k-q)$-dimensional hyperplane in $[0,1]^k$ is at most $\sqrt{q+1}<2^{q+1}$.
Thus, if $t(q)=0$ then $\lambda(\delta[V^q])\leq 2^{q-t(q)+1}\cdot\varepsilon_{t(q)-1}$ holds since $\delta[V^q]\subseteq P_e$ and $\varepsilon_{-1}=1$.

If $\lambda(\delta[V^q])\leq 2^{q-t(q)+1}\cdot\varepsilon_{t(q)-1}$, then $\lambda(Q^{t(q)}_i)\leq 2^{q-t(q)}\cdot\varepsilon_{t(q)-1}$ for some $i<2$ as discussed above.
Therefore, by the argument discussed above, at some stage $s$, one can find a rational closed subset $Y$ of a $(k-q)$-dimensional hyperplane, $t\in\mathbb{N}$, and $i<2$ satisfying the condition in the item (6).
At this stage, the $q$-th substrategy executes the $t(q)$-th action, that is, this defines $J_e(0^q1)[s]=I_i$.
Therefore, if {\sf Opp} wins, $P_e$ has no intersection with $\delta[V_i^{t(q)}]$.
This is because for any $x\in\delta[V^{t(q)}_i]$ has a name $z\in V_i^{t(q)}$, and therefore, $\varphi_e(z)$ extends $0^q1$ and $\psi_e(z)\not\in I_i=J_e(0^q1)$.

Consequently, we have $\delta[V^q]\subseteq Q^{t(q)}_i$, which forces that $\lambda^{k-q}(\delta[V^q])\leq 2^{q-t(q)}\cdot\varepsilon_{t(q)}\leq 2^{q-t(q)+1}$.
Eventually, we have $\lambda^{k-q}(\delta[V^q])=0$ as $t(q)$ tends to infinity.
Since $\delta[V^q]$ is a nonempty open subset of the convex set $P_e$, the condition $\lambda^{k-q}(\delta[V^q])=0$ implies that $P_e$ is at most $(k-q-1)$-dimensional.
Eventually, this construction forces that $P_e$ is zero-dimensional; hence, by convexity, $P_e$ has only one point.
Then, however, it must satisfy $P_e\subseteq\bigcup\{\delta[\varphi_e^{-1}[\rho]]:\rho\in 2^{p+1}\mbox{ and }\rho\not=\tau\}$ for some $\tau$.
Therefore, by compactness, this is witnessed at stage $s$, and then we answer yes to the question in (3).
This witnesses the failure of {\sf Opp}'s strategy as before, which contradicts our assumption.
\end{proof}

Suppose that $Z_0\circ Z_1\leq_W{\sf XC}_k$ via $H$ and $K=\langle K_0,K_1\rangle$, that is, given a pair $(T,J)$ of an a.o.u.~tree $T$ and a nonempty interval $J$, for any point $x$ of an at most $k$-dimensional convex closed set $H(T,J)$, we have $K_0(x,T,J)=p\in [T]$ and $K_1(x,T,J)\in J(p)$.
Then there is a computable function $\mathbb{N}\to\mathbb{N}$ such that $P_{f(e)}=H(T_e,J_e)$, $\varphi_{f(e)}=\lambda x.K_0(x,T_e,J_e)$ and $\psi_{f(e)}=\lambda x.K_1(x,T_e,J_e)$.
By Kleene's recursion theorem, there is $r\in\mathbb{N}$ such that $(P_{f(r)},\varphi_{f(r)},\psi_{f(r)})=(P_r,\varphi_r,\psi_r)$.
However, by the above claim, $(T_r,J_r)$ witnesses that {\sf Opp}'s strategy with $(P_r,\varphi_r,\psi_r)$ fails, which contradicts our assumption.
\end{proof}

\section*{Acknowledgement}
We are grateful to St\'ephane Le Roux for a fruitful discussion leading up to Theorems \ref{theo:simpleproducts} and \ref{theo:convexsort}.

\bibliographystyle{eptcs}
\bibliography{../../spieltheorie}

\end{document}